\documentclass{amsart}
\usepackage{amssymb}
\usepackage{latexsym}

\date{\today}

\newcommand{\Z}{{\mathbb Z}}
\newcommand{\R}{{\mathbb R}}
\newcommand{\Q}{{\mathbb Q}}
\newcommand{\C}{{\mathbb C}}
\newcommand{\D}{{\mathbb D}}
\newcommand{\T}{{\mathbb T}}

\newcommand{\PP}{{\mathbb P}}
\newcommand{\HH}{{\mathbb H}}

\newcommand{\CD}{{\mathcal D}}
\newcommand{\CE}{{\mathcal E}}

\newcommand{\CB}{{\mathcal B}}
\newcommand{\CH}{{\mathcal H}}
\newcommand{\CI}{{\mathcal I}}

\newcommand{\CU}{{\mathcal U}}
\newcommand{\CK}{{\mathcal K}}

\newtheorem{theorem}{Theorem}
\newtheorem{lemma}{Lemma}
\newtheorem{defi}{Definition}
\newtheorem{prop}{Proposition}
\newtheorem*{subcon}{Subshift Conjecture}
\newtheorem{corollary}{Corollary}
\sloppy

\begin{document}

\title[Singular Density of States Measure]{Singular Density of States Measure for Subshift and Quasi-Periodic Schr\"{o}dinger Operators}

\author[A.\ Avila]{Artur Avila}

\address{
CNRS UMR 7586, Institut de Math\'ematiques de Jussieu - Paris Rive Gauche,
B\^atiment Sophie Germain, Case 7012, 75205 Paris Cedex 13, France
\&
IMPA, Estrada Dona Castorina 110, 22460-320, Rio de Janeiro, Brazil}

\email{{artur@math.jussieu.fr}}

\urladdr{http://w3.impa.br/$\sim$avila/}

\thanks{A.\ A.\ was supported by the ERC Starting Grant ``Quasiperiodic'' and by the Balzan project of Jacob Palis.}

\author[D.\ Damanik]{David Damanik}

\address{Department of Mathematics, Rice University, Houston, TX~77005, USA}

\email{damanik@rice.edu}

\urladdr{www.ruf.rice.edu/$\sim$dtd3}

\thanks{D.\ D.\ was supported in part by a Simons Fellowship and NSF grants DMS--0800100 and DMS--1067988.}

\author[Z.\ Zhang]{Zhenghe Zhang}

\address{Department of Mathematics, Northwestern University, Evanston, IL~60208, USA}

\email{zhenghe@math.northwestern.edu}

\begin{abstract}
Simon's subshift conjecture states that for every aperiodic minimal subshift of Verblunsky coefficients, the common essential support of the associated measures has zero Lebesgue measure. We disprove this conjecture in this paper, both in the form stated and in the analogous formulation of it for discrete Schr\"odinger operators. In addition we prove a weak version of the conjecture in the Schr\"odinger setting. Namely, under some additional assumptions on the subshift, we show that the density of states measure, a natural measure associated with the operator family and whose topological support is equal to the spectrum, is singular. We also consider one-frequency quasi-periodic Schr\"odinger operators with continuous sampling functions and show that generically, the density of states measure is singular as well.
\end{abstract}

\maketitle

\section{Introduction}\label{sec.1}

The theme of this paper is driven by the desire to prove zero-measure spectrum in several instances. One scenario in which zero-measure spectrum has been shown to be typical was reviewed in \cite{D}. If one considers an aperiodic strictly ergodic subshift over a finite alphabet, which is assumed to consist of real numbers for simplicity, and considers Schr\"odinger operators in $\ell^2(\Z)$ with potentials given by the elements of the subshift, then the (by minimality) common spectrum of these operators has a strong tendency to be a set of zero Lebesgue measure. In fact, it has been shown that the so-called Boshernitzan condition, which holds for many strictly ergodic subshifts \cite{DL2}, is a sufficient criterion for zero-measure spectrum \cite{DL1}. Moreover, there are aperiodic strictly ergodic subshifts for which the Boshernitzan condition fails but zero-measure spectrum holds \cite{LQ}.

Thus, one is tempted to conjecture that zero-measure spectrum is indeed a universal feature of this class of models. Indeed, Barry Simon conjectured in \cite{S2} that minimality and aperiodicity should be sufficient.\footnote{To be precise, the conjecture as formulated in \cite{S2} makes this statement for the unitary analogues, the so-called CMV matrices. We will comment on this distinction below.} We will show that the conjecture is false. Indeed, we will construct an aperiodic minimal subshift for which the associated spectrum has positive Lebesgue measure.

A weakening of zero-measure spectrum that is still interesting in its own right is asking for singularity of the density of states measure. This measure depends on the choice of an ergodic measure $\mu$ on the subshift and it is associated with the operator family in a natural way. Namely, it is simply given by the average with respect to the chosen ergodic measure of the spectral measure corresponding to the operator and the delta function at the origin. The topological support of this measure is equal to the ($\mu$-almost sure) spectrum of the operators and hence zero-measure spectrum implies the singularity of the density of states measure. On the other hand, one may well ask whether there is \emph{some} support of the measure that has zero Lebesgue measure even in cases where zero-measure spectrum fails to hold.

We will identify a sufficient condition for the singularity of the density of states measure associated with a subshift and an ergodic measure on it. This condition will be formulated in terms of polynomial transitivity and polynomial factor complexity properties of the subshift that hold almost surely with respect to the measure. We will give several examples to which this result can be applied. These will include subshifts generated by codings of shifts and skew-shifts on tori, as well as codings of interval exchange transformations.

The second scenario in which one desires to prove zero-measure spectrum is the class of one-frequency quasi-periodic Schr\"odinger operators in $\ell^2(\Z)$. The potentials of these operators are generated by an irrational rotation of the circle and a continuous sampling function. Holding the rotation fixed and varying the sampling function, one may ask about the typical spectral behavior in the sense of Baire. That is, is there a spectral phenomenon that holds for a residual set of continuous sampling functions? It has been shown that in this sense, the absence of absolutely continuous spectrum \cite{AD} as well as the absence of point spectrum \cite{BD} are typical. Thus, for fixed irrational rotation, there is a residual set of sampling functions such that for each of them, the operators have purely singular continuous spectrum (for Lebesgue almost every point on the circle). Thus, the typical spectral type in this setting is the same as that of operators with subshift potentials.

The analogy here is even closer than it appears at first sight. The results in \cite{AD, BD} are proved by approximating a continuous sampling function by step functions in the uniform topology. The latter induce potentials that may be considered as subshift potentials and their spectral properties are pushed through to the limit. Thus, the known results suggest that what is typical for subshift potentials should also be typical for a generic continuous sampling function.

Since the approximating step functions can always be chosen so that the associated operators have zero-measure spectrum (as shown in \cite{DL2}), one may conjecture that for generic continuous sampling functions, one should have zero-measure spectrum as well. We are neither able to prove this, nor are we able to disprove this. However, in line with our results in the subshift setting, we are able to show that the density of states measure is singular for generic continuous sampling functions. We also show that there are sampling functions for which the density of states measure is singular, but for which the spectrum does have positive measure. That is, zero-measure spectrum is indeed a strictly stronger property in the context of one-frequency quasi-periodic Schr\"odinger operators. In fact, in the examples we construct to exhibit this phenomenon, the spectrum actually contains an interval. This result is of additional independent interest since this also provides the first example of a one-frequency quasi-periodic Schr\"odinger operator whose spectrum is not a Cantor set.

The structure of the paper is as follows. In Section~\ref{sec.2} we collect a few general results that apply to (and will be used in) both scenarios we consider. In Section~\ref{sec.3} we discuss the subshift setting, disprove the subshift conjecture, prove our weak replacement of it, and apply the latter result to several examples. Finally, in Section~\ref{sec.4}, we consider the one-frequency quasi-periodic setting, prove generic singularity of the density of states measure, and construct examples with singular density of states measure and spectrum containing an interval.

\bigskip

\noindent\textit{Acknowledgments.} D.~D.\ and Z.~Z.\ would like to thank IMPA for the kind hospitality and financial support for a stay at the institute during which much of this work was done. Z.~Z.'s travel to IMPA was supported by NSF grant DMS-1001727 (PI: A.~Wilkinson).

\section{Preliminaries}\label{sec.2}

Since we are mainly interested in Schr\"odinger operators in this paper, and all the potentials we consider fit into the framework of dynamically defined potentials, let us recall the framework and some general results that hold in the general case. All statements in this subsection are well known. We refer the reader to \cite{CL, CFKS, J, K2, PF} for proofs and further background.

Given a compact metric space $\Omega$, a homeomorphism $T : \Omega \to \Omega$, and a bounded Borel measurable sampling function $f : \Omega \to \R$, we define the family $\{ H_\omega \}_{\omega \in \Omega}$ of Schr\"odinger operators in $\ell^2(\Z)$ by
\begin{equation}\label{e.oper}
[H_\omega \psi](n) = \psi(n+1) + \psi(n-1) + V_\omega(n) \psi(n),
\end{equation}
with the potentials $V_\omega(n) = f(T^n \omega)$, $\omega \in \Omega$, $n \in \Z$.

Suppose $\mu$ is a $T$-ergodic probability measure on $\Omega$. The associated density of states measure $dk$ on $\R$ is given by
$$
\int g \, dk = \int_\Omega \langle \delta_0 , g(H_\omega) \delta_0 \rangle \, d\mu(\omega).
$$
Its distribution function,
$$
k(E) = \int \chi_{(-\infty,E]} \, dk,
$$
is called the integrated density of states (IDS). Both depend on $T$, $f$, and $\mu$, but the dependence will usually be suppressed from the notation. It is a standard result that the integrated density of states is always continuous in our setting. In other words, the density of states measure has no atoms.

\begin{prop}
Given a $T$-ergodic probability measure $\mu$ on $\Omega$, there is a set $\Sigma_\mu \subset \R$ such that $\sigma(H_\omega) = \Sigma_\mu$ for every $\mu$-almost every $\omega \in \Omega$. Moreover, $\Sigma_\mu$ is the topological support of $dk_\mu$.
\end{prop}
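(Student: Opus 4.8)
The plan is to prove the two assertions separately, both as consequences of ergodicity combined with the covariance of the family $\{H_\omega\}_{\omega\in\Omega}$ under $T$.

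\emph{Almost sure constancy of the spectrum.} Let $S:\ell^2(\Z)\to\ell^2(\Z)$ be the unitary shift $(S\psi)(n)=\psi(n+1)$. Since $V_\omega(n)=f(T^n\omega)$, a direct computation gives $S H_\omega S^{-1}=H_{T\omega}$, and hence, by the spectral theorem, $g(H_{T\omega})=S\,g(H_\omega)\,S^{-1}$ for every bounded Borel $g$. For rationals $p<q$ set
\[
N_{p,q}(\omega):=\operatorname{tr}\chi_{(p,q)}(H_\omega)\in\{0,1,2,\dots\}\cup\{\infty\}.
\]
This is $\mu$-measurable (the diagonal matrix elements of $g(H_\omega)$ depend measurably on $\omega$ for bounded Borel $g$, and the trace is a countable sum of such), and the intertwining relation above gives $N_{p,q}\circ T=N_{p,q}$. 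By ergodicity there is a constant $n_{p,q}\in\{0,1,\dots\}\cup\{\infty\}$ with $N_{p,q}=n_{p,q}$ for $\mu$-a.e.\ $\omega$. Put
\[
\Sigma_\mu:=\R\setminus\bigcup\bigl\{(p,q):p,q\in\Q,\ n_{p,q}=0\bigr\}.
\]
Since $E\notin\sigma(H_\omega)$ if and only if $N_{p,q}(\omega)=0$ for some rationals $p<E<q$ (the resolvent set is open, and a projection of trace zero is the zero operator), intersecting over the countably many pairs the full-measure sets on which $N_{p,q}=n_{p,q}$ yields $\sigma(H_\omega)=\Sigma_\mu$ for $\mu$-a.e.\ $\omega$.

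\emph{$\Sigma_\mu$ equals $\operatorname{supp} dk$.} If $E\notin\Sigma_\mu$, choose rationals $p<E<q$ with $n_{p,q}=0$; then $\chi_{(p,q)}(H_\omega)=0$ for $\mu$-a.e.\ $\omega$, so
\[
dk\bigl((p,q)\bigr)=\int_\Omega\langle\delta_0,\chi_{(p,q)}(H_\omega)\delta_0\rangle\,d\mu(\omega)=0,
\]
whence $E\notin\operatorname{supp} dk$. Conversely, suppose $(p,q)$ is a rational-endpoint interval containing $E$ with $dk((p,q))=0$. Then the nonnegative function $\omega\mapsto\langle\delta_0,\chi_{(p,q)}(H_\omega)\delta_0\rangle$ has vanishing integral, hence vanishes for $\mu$-a.e.\ $\omega$. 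Using $\chi_{(p,q)}(H_{T^n\omega})=S^n\chi_{(p,q)}(H_\omega)S^{-n}$ we obtain $\langle\delta_n,\chi_{(p,q)}(H_\omega)\delta_n\rangle=\langle\delta_0,\chi_{(p,q)}(H_{T^n\omega})\delta_0\rangle$, which for each fixed $n\in\Z$ vanishes for $\mu$-a.e.\ $\omega$; intersecting over $n$, for $\mu$-a.e.\ $\omega$ the orthogonal projection $\chi_{(p,q)}(H_\omega)$ satisfies $\|\chi_{(p,q)}(H_\omega)\delta_n\|^2=\langle\delta_n,\chi_{(p,q)}(H_\omega)\delta_n\rangle=0$ for all $n$, so $\chi_{(p,q)}(H_\omega)=0$. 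Thus $(p,q)\cap\sigma(H_\omega)=\emptyset$ a.s., i.e.\ $(p,q)\cap\Sigma_\mu=\emptyset$ and $E\notin\Sigma_\mu$. The two implications give $\operatorname{supp} dk=\Sigma_\mu$.

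\emph{Main obstacle.} No step here is deep; the one point that needs genuine care is the step in the second part upgrading the vanishing of the single matrix element $\langle\delta_0,\chi_{(p,q)}(H_\omega)\delta_0\rangle$ — which is all that $dk((p,q))=0$ provides directly — to the vanishing of the whole spectral projection $\chi_{(p,q)}(H_\omega)$, and this is precisely where $T$-covariance together with ergodicity enter. One should also verify the measurability statements used throughout, namely that $\omega\mapsto\langle\delta_n,g(H_\omega)\delta_m\rangle$ is measurable for bounded Borel $g$ (approximate $g$ boundedly and pointwise by continuous functions, for which the claim is clear), but this is routine.
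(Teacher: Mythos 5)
Your proof is correct. The paper does not actually prove this proposition; it lists it among the ``well known'' facts and refers to the literature (Carmona--Lacroix, Cycon--Froese--Kirsch--Simon, Pastur--Figotin). Your argument is essentially the standard one from those references: the covariance $S H_\omega S^{-1} = H_{T\omega}$ makes $\omega \mapsto \operatorname{tr}\chi_{(p,q)}(H_\omega)$ a $T$-invariant function, so ergodicity gives a.e.\ constancy, and running over rational intervals pins down $\sigma(H_\omega)$ a.s.; for the support identification, the nontrivial direction is exactly the one you flag, namely upgrading the vanishing of the single diagonal entry $\langle \delta_0, \chi_{(p,q)}(H_\omega)\delta_0\rangle$ (all that $dk((p,q))=0$ gives) to vanishing of the entire spectral projection, and your use of the $\Z$-family of covariance identities $\langle\delta_n,\chi_{(p,q)}(H_\omega)\delta_n\rangle=\langle\delta_0,\chi_{(p,q)}(H_{T^n\omega})\delta_0\rangle$ together with the projection identity $\|P\delta_n\|^2=\langle\delta_n,P\delta_n\rangle$ is the right way to do it. One small imprecision in your measurability remark: a general bounded Borel $g$ on a compact interval cannot be obtained as a bounded pointwise limit of continuous functions (that only gives Baire class $1$); the clean statement is via a functional monotone-class argument. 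However, the only $g$ you actually use is $\chi_{(p,q)}$ with $(p,q)$ open, which is lower semicontinuous and hence an increasing pointwise limit of continuous functions, so your sketch is literally correct for the case at hand.
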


In general, the set $\Sigma_\mu$ depends on $\mu$. However, we have the following simple criterion for $\sigma(H_\omega)$ to be entirely independent of $\omega$, which is an easy consequence of strong convergence.

\begin{prop}\label{p.minspec}
If $T$ is minimal and $f$ is continuous, then there is a compact set $\Sigma \subset \R$ such that $\sigma(H_\omega) = \Sigma$ for every $\omega \in \Omega$.
\end{prop}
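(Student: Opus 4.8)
The plan is to fix a base point $\omega_0 \in \Omega$, set $\Sigma := \sigma(H_{\omega_0})$, and prove that $\sigma(H_\omega) = \Sigma$ for every $\omega \in \Omega$ by combining the translation covariance of the family with the fact that the spectrum cannot expand under strong convergence.

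First I would record the two structural inputs. Strong continuity of $\omega \mapsto H_\omega$: since $f$ is continuous and $T$ is a homeomorphism, $\omega_k \to \omega$ forces $V_{\omega_k}(n) = f(T^n\omega_k) \to f(T^n\omega) = V_\omega(n)$ for each fixed $n$, and with the uniform bound $\|V_\omega\|_\infty \le \|f\|_\infty < \infty$ this gives $H_{\omega_k} \to H_\omega$ strongly (test against finitely supported vectors and use density together with uniform boundedness). Translation covariance: for each $n \in \Z$ the shift $(U\psi)(m) = \psi(m-n)$ on $\ell^2(\Z)$ conjugates $H_\omega$ to $H_{T^n\omega}$, so $\sigma(H_{T^n\omega}) = \sigma(H_\omega)$; in particular the spectrum is constant along every $T$-orbit.

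The geometric ingredient is minimality: given $\omega \in \Omega$, the orbit $\{T^n\omega_0\}_{n\in\Z}$ is dense, so there is a sequence $n_k$ with $T^{n_k}\omega_0 \to \omega$, and then $H_{T^{n_k}\omega_0} \to H_\omega$ strongly while $\sigma(H_{T^{n_k}\omega_0}) = \Sigma$ for all $k$. To conclude $\sigma(H_\omega) \subseteq \Sigma$ I would use the elementary "no sudden expansion of spectrum'' property, most transparently in its concrete Schrödinger form: fix $E \in \sigma(H_\omega)$ and $\varepsilon > 0$, choose a finitely supported unit vector $\psi$ with $\|(H_\omega - E)\psi\| < \varepsilon$ (take a Weyl sequence and truncate one of its elements in norm, then renormalize), and observe that for $k$ large the potential values $f(T^m T^{n_k}\omega_0)$ differ from $f(T^m\omega)$ by at most $\varepsilon$ on the finite support of $\psi$, so $\|(H_{T^{n_k}\omega_0} - E)\psi\| < 2\varepsilon$. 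Hence $\mathrm{dist}(E,\Sigma) = \mathrm{dist}(E,\sigma(H_{T^{n_k}\omega_0})) \le 2\varepsilon$, and letting $\varepsilon \to 0$ (with $\Sigma$ closed) gives $E \in \Sigma$. Running the identical argument with $\omega$ and $\omega_0$ interchanged — using that the orbit of $\omega$ accumulates at $\omega_0$ — yields $\Sigma = \sigma(H_{\omega_0}) \subseteq \sigma(H_\omega)$, so $\sigma(H_\omega) = \Sigma$. Finally $\Sigma$ is compact, being the spectrum of the bounded self-adjoint operator $H_{\omega_0}$.

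I do not expect a genuine obstacle; the one point deserving care is the direction of the spectral inclusion under strong convergence (the spectrum may shrink but not jump outward in the limit), which is exactly why I would present the explicit truncated-Weyl-sequence estimate above rather than invoke an abstract strong-resolvent-convergence theorem.
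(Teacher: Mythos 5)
Your proof is correct and follows exactly the route the paper indicates: the paper gives no detailed argument for Proposition~\ref{p.minspec}, only the remark that it is ``an easy consequence of strong convergence,'' and that is precisely what you carry out, combining translation covariance, minimality, and the one-sided spectral semicontinuity under strong (resolvent) convergence, made explicit via the truncated-Weyl-sequence estimate $\|(A-E)\psi\|<\varepsilon$ for unit $\psi$ implies $\mathrm{dist}(E,\sigma(A))\le\varepsilon$.
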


Given a continuous function $h : \Omega \to \R$, we consider the $\mathrm{SL}(2,\R)$ cocycle $(T, A^h)$ over $T$, given by
$$
(T, A^h) : \Omega \times \R^2 \to \Omega \times \R^2, \quad (\omega, v) \mapsto (T \omega , A^h(\omega) v),
$$
where
$$
A^h(\omega) = \begin{pmatrix} h(\omega) & - 1 \\ 1 & 0 \end{pmatrix}.
$$
Note that for every $n \in \Z$, we have $(T, A^h)^n = (T^n, A^h_n)$ with a suitable function $A^h_n : \Omega \to \mathrm{SL}(2,\R)$. This cocycle is called \emph{uniformly hyperbolic} if $\|A^h_n(\omega)\| \ge c \lambda^{|n|}$ for suitable $c > 0$ and $\lambda > 1$, uniformly in $\omega \in \Omega$. Uniform hyperbolicity has various equivalent descriptions, for example in terms of exponential dichotomy in the sense of \cite{J}; see \cite{Y} for further discussion.

If we consider a function of the form $h(\omega) = E - f(\omega)$ with a continuous sampling function $f$ and some energy $E \in \R$, then the associated cocycle $(T, A^{E - f})$ generates the standard transfer matrices associated with the difference equation $H_\omega u = E u$.

Here is an important result of Johnson \cite[Theorem~3.1]{J}:

\begin{prop}\label{p.johnson}
Let $f \in C(\Omega,T)$. Suppose $\omega \in \Omega$ has a dense $T$-orbit. Then, an energy $E \in \R$ belongs to the resolvent set of $H_\omega$ {\rm (}i.e., $E \not\in \sigma(H_\omega)${\rm )} if and only if the cocycle $(T, A^{E - f})$ is uniformly hyperbolic.

In particular, if $T$ is minimal, then
$$
\R \setminus \Sigma = \CU\CH,
$$
where $\CU\CH := \{ E \in \R : (T, A^{E - f}) \text{ is uniformly hyperbolic} \}$.
\end{prop}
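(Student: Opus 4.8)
\emph{Proof plan.} I would prove the two implications of the stated equivalence separately and then read off the displayed identity: when $T$ is minimal every $\omega\in\Omega$ has a dense orbit, and $\sigma(H_\omega)=\Sigma$ for all $\omega$ by Proposition~\ref{p.minspec}, so the equivalence applied energy by energy gives $\R\setminus\Sigma=\CU\CH$. The whole argument runs through the standard dictionary between the resolvent (Green's function) of $H_\omega$ and exponential dichotomy of the transfer cocycle $(T,A^{E-f})$.

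\emph{Uniform hyperbolicity $\Rightarrow$ resolvent set.} This direction uses no density hypothesis. Uniform hyperbolicity of $(T,A^{E-f})$ is equivalent to the existence of a continuous $A^{E-f}$-invariant splitting $\R^2=\ell^u(\omega)\oplus\ell^s(\omega)$ with uniform forward contraction along $\ell^s$ and uniform forward expansion along $\ell^u$. A nonzero vector in $\ell^s(\omega)$ corresponds to a solution $u^+$ of $H_\omega u=Eu$ decaying exponentially as $n\to+\infty$, and one in $\ell^u(\omega)$ to a solution $u^-$ decaying exponentially as $n\to-\infty$; since $\ell^s(\omega)\ne\ell^u(\omega)$, their constant Wronskian $W$ is nonzero. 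Then
$$
G_\omega(m,n)=W^{-1}\,u^+(\max(m,n))\,u^-(\min(m,n))
$$
is an exponentially decaying kernel, hence defines a bounded operator on $\ell^2(\Z)$, and a direct computation shows it inverts $H_\omega-E$. Thus $E$ lies in the resolvent set of $H_\omega$, in fact of $H_{\omega'}$ for every $\omega'\in\Omega$.

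\emph{Resolvent set $\Rightarrow$ uniform hyperbolicity.} Suppose $E\notin\sigma(H_\omega)$ with $\omega$ having a dense orbit. The first step is to make this uniform over $\Omega$. By covariance $\|(H_{T^n\omega}-E)^{-1}\|=\|(H_\omega-E)^{-1}\|$ for all $n$, so $\mathrm{dist}(E,\sigma(H_{T^n\omega}))$ is bounded below by some $\delta>0$ uniformly in $n$. For arbitrary $\omega'\in\Omega$, write $\omega'=\lim_k T^{n_k}\omega$; since $f$ is continuous and the potentials are uniformly bounded, $H_{T^{n_k}\omega}\to H_{\omega'}$ in the strong resolvent sense, and testing a finitely supported approximate eigenvector of $H_{\omega'}$ against the operators $H_{T^{n_k}\omega}$ forces $\mathrm{dist}(E,\sigma(H_{\omega'}))\ge\delta$ as well. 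A Combes--Thomas estimate then yields constants $C',\gamma>0$, independent of $\omega'$, with $|G_{\omega'}(m,n)|\le C'e^{-\gamma|m-n|}$. For each $\omega'$ the sequence $n\mapsto G_{\omega'}(n,0)$ solves $H_{\omega'}u=Eu$ away from $0$ and decays at $+\infty$, hence extends to a nonzero solution $u^+_{\omega'}$ decaying exponentially at $+\infty$; likewise one obtains $u^-_{\omega'}$ decaying at $-\infty$. The lines spanned by consecutive values of $u^+_{\omega'}$ and of $u^-_{\omega'}$ yield an $A^{E-f}$-invariant splitting of $\R^2$ over $\Omega$, and I would verify, using the uniform Green's function bound, that this splitting is continuous in $\omega'$, uniformly transverse, and uniformly exponentially contracting/expanding — which is precisely uniform hyperbolicity of $(T,A^{E-f})$.

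The main obstacle is this last verification: passing from a pointwise splitting to a genuinely \emph{uniform} one. The decay rate $\gamma$ is already uniform, but one still needs a uniform lower bound on the Wronskians $W(\omega')$ (equivalently, on the angle between the stable and unstable lines) and a uniform upper bound on $\|u^\pm_{\omega'}\|$ near the origin. I expect these to come from the identity $G_{\omega'}(m,n)=W(\omega')^{-1}u^+_{\omega'}(\max(m,n))\,u^-_{\omega'}(\min(m,n))$ evaluated at well-chosen $m,n$, combined with the uniform bound on $G_{\omega'}$; and continuity of $\omega'\mapsto\ell^{s/u}(\omega')$ would then follow from the uniqueness of the decaying solutions together with a normal-families argument.
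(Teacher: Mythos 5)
The paper does not prove this proposition at all: it is stated as Johnson's theorem and attributed to \cite[Theorem~3.1]{J}, so there is no proof in the paper to compare against. Your proposal is an honest reconstruction of the standard argument for Johnson's theorem, and its outline is correct.

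A few remarks on the substance. The forward implication is fine as you wrote it. In the reverse implication, the chain ``covariance $\Rightarrow$ uniform lower bound on $\mathrm{dist}(E,\sigma(H_{T^n\omega}))$, density of orbit plus strong convergence $\Rightarrow$ same bound at every $\omega'$, Combes--Thomas $\Rightarrow$ uniform exponential Green's function decay'' is exactly the right strategy. One small point: the covariance step is even cleaner than you state, since $H_{T^n\omega}$ is \emph{unitarily equivalent} to $H_\omega$ via the shift, so $\sigma(H_{T^n\omega})=\sigma(H_\omega)$ identically. When you build $u^+_{\omega'}$ from $G_{\omega'}(\cdot,0)$, note that $G_{\omega'}(n,0)$ could in principle vanish for all $n\ge 1$; this is harmless because one can use $G_{\omega'}(\cdot,m)$ for a different $m$, but it deserves a sentence.

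The one genuine gap is the one you flag yourself: upgrading the pointwise splitting to a continuous, uniformly transverse one. Your expectation is correct but you should be explicit about the mechanism, because it is not purely an algebraic identity. The key is a compactness argument: normalize $(u^\pm_{\omega'}(1),u^\pm_{\omega'}(0))$ to be unit vectors. Continuity of the stable and unstable lines in $\omega'$ follows because the exponentially decaying solution at $+\infty$ (resp.\ $-\infty$) is unique up to scalar (a Wronskian argument) and because a normal-families/diagonal argument applied to the uniformly bounded, uniformly decaying solutions forces any limit to be \emph{the} decaying solution at the limit point. Uniform transversality then follows by contradiction: if the Wronskians $W(\omega_k')\to 0$, pass to a limit $\omega'$ by compactness of $\Omega$; continuity gives $\ell^s(\omega')=\ell^u(\omega')$, hence a nonzero $\ell^2$ solution of $H_{\omega'}u=Eu$, contradicting $E\notin\sigma(H_{\omega'})$. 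Finally, the uniform contraction constant needs a uniform bound on $|u^+_{\omega'}(n)|$ for $n$ in a fixed initial window $[0,N_0]$; this follows because the one-step transfer matrices are uniformly bounded on $\Omega\times\{E\}$ and $N_0$ can be taken $\omega'$-independent from the uniform Combes--Thomas constants. With these three points spelled out the reverse direction is complete.

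One cosmetic note: the hypothesis $f\in C(\Omega,T)$ in the statement is a typo for $f\in C(\Omega,\R)$; you correctly read it that way.
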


By the subadditive ergodic theorem, for each $E \in \R$, there is $L(E) \ge 0$, called the Lyapunov exponent at energy $E$, and a set $\Omega_E$ of full measure such that for every $\omega \in \Omega_E$, we have
$$
\lim_{n \to \infty} \frac1n \log \| A^{E-f}_n (\omega) \| = L(E).
$$
We have the following important result of Kotani \cite{K2}, which will be crucial in what we do in the subshift setting.

\begin{prop}\label{p.kotani}
Suppose $f$ takes finitely many values. Then, we have the following dichotomy: either $V_\omega$ is periodic for $\mu$-almost every $\omega$, or $L(E) > 0$ for Lebesgue almost every $E \in \R$.
\end{prop}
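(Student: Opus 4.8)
The plan is to reduce the dichotomy to a single implication and then combine Kotani's theory with a soft combinatorial argument, exploiting the finiteness of the range of $f$ in an essential way. One direction is immediate: if $V_\omega$ is periodic for $\mu$-a.e.\ $\omega$, then a.e.\ $H_\omega$ is a periodic Schr\"odinger operator, whose spectrum is a finite union of nondegenerate bands and hence has positive Lebesgue measure; on that set $L$ vanishes, so it is not the case that $L(E) > 0$ for a.e.\ $E$. Thus it suffices to prove: if $A := \{ E \in \R : L(E) = 0 \}$ has positive Lebesgue measure, then $V_\omega$ is periodic for $\mu$-a.e.\ $\omega$.

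Assume $|A| > 0$. The first ingredient is Kotani's theory in its standard form (valid for any bounded ergodic potential): since $L \equiv 0$ on $A$ and $|A| > 0$, for $\mu$-a.e.\ $\omega$ the potential $V_\omega$ is \emph{reflectionless} on $A$, i.e.\ for a.e.\ $E \in A$ the half-line $m$-functions satisfy $m_+(\omega, E + i0) = - \overline{m_-(\omega, E + i0)}$. Since $m_-(\omega, \cdot)$ depends only on $(V_\omega(n))_{n \le -1}$ while $m_+(\omega, \cdot)$ encodes $(V_\omega(n))_{n \ge 0}$, this already shows that the process is deterministic. It is worth stressing that bare ergodic-theoretic determinism does \emph{not} imply periodicity --- Sturmian subshifts are deterministic yet aperiodic --- so the hypothesis that $f$ takes finitely many values must be used, and doing so is where the real work lies.

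The key step is to upgrade ``reflectionless on a positive-measure set'' to a statement of \emph{uniformly bounded memory}. By Remling's oracle theorem (the quantitative incarnation of the reflectionless property), for every $\varepsilon > 0$ there is a finite $L = L(\varepsilon)$ such that, for $\mu$-a.e.\ $\omega$, the restriction of $m_+(\omega, \cdot)$ to a fixed compact subset of the upper half-plane is determined up to error $\varepsilon$ by the single finite word $V_\omega|_{[-L,-1]}$. On the other hand, $f$ takes values in a finite set $\CA \subset \R$, so distinct values are $\delta$-separated for some $\delta > 0$; since $m_+(\omega, \cdot)$ is a Herglotz function bounded uniformly over all admissible potentials and $V_\omega(0)$ can be read off its asymptotic expansion at infinity (hence depends continuously on $m_+(\omega, \cdot)$ restricted to a compact), for $\varepsilon$ small enough relative to $\delta$ the value $V_\omega(0)$ is \emph{exactly} pinned down by knowledge of $m_+(\omega, \cdot)$ up to error $\varepsilon$. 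Fixing such an $\varepsilon$ and the corresponding $L$, we obtain a map $g : \CA^{L} \to \CA$ with $V_\omega(0) = g\big( V_\omega|_{[-L,-1]} \big)$ for $\mu$-a.e.\ $\omega$, and hence, by $T$-invariance, $V_\omega(n) = g\big( V_\omega|_{[n-L,\,n-1]} \big)$ for all $n \in \Z$ and $\mu$-a.e.\ $\omega$.

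It remains to pass from ``$\mu$-a.e.'' to ``everywhere on the support'' and to conclude. Push $\mu$ forward under $\omega \mapsto (f(T^n\omega))_{n \in \Z}$ to a shift-invariant probability measure on $\CA^\Z$ and let $\Omega_\mu$ be its topological support. Since every cylinder of $\mu$-measure zero is an open set disjoint from $\Omega_\mu$, the relation $V(n) = g\big( V|_{[n-L,\,n-1]} \big)$ holds for \emph{every} point of $\Omega_\mu$ and every $n$. Thus in the subshift $\Omega_\mu$ each symbol is a function of the preceding $L$ symbols, so the factor complexity obeys $p_{\Omega_\mu}(n) \le |\CA|^{L}$ for all $n$; by the Morse--Hedlund theorem a subshift of bounded complexity is finite, with all of its points periodic. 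Consequently $V_\omega$ is periodic for $\mu$-a.e.\ $\omega$, as desired. The principal obstacle, as noted, is the middle step: extracting from Kotani/Remling theory the \emph{uniform} bound on the memory length, which is precisely where ``finitely many values'' becomes indispensable.
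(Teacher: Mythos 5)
The paper does not prove this proposition at all; it is stated as a citation of Kotani's 1989 result \cite{K2}, so there is nothing in the source to compare against. Your reconstruction is nonetheless essentially correct and matches the standard understanding of Kotani's argument: (i) $\mathrm{Leb}(A)>0$ with $L\equiv 0$ on $A$ yields a.e.\ reflectionlessness by Kotani theory; (ii) a compactness argument upgrades this deterministic relation to a \emph{uniformly bounded memory}; (iii) the $\delta$-separation of the finitely many values in $\CA$ converts the approximate prediction into an exact function $g : \CA^L \to \CA$; and (iv) the resulting finite-memory rule on the support bounds the factor complexity, so Morse--Hedlund gives periodicity. The one place your presentation differs from Kotani's own is step (ii): Kotani argues directly that the map $V|_{(-\infty,-1]} \mapsto m_-\mapsto m_+ \mapsto V(0)$ is continuous on the compact totally disconnected set $\CA^{\Z_{\le -1}}$, hence locally constant, whereas you invoke Remling's oracle theorem, which postdates Kotani's paper but is itself built on the same reflectionless/normal-family reasoning, so there is no circularity --- just an anachronism. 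One point you should make fully explicit (you gesture at it correctly): the passage from ``$\mu$-a.e.'' to ``everywhere on the support'' works only because the failure set of the relation $V(0)=g(V|_{[-L,-1]})$ is a \emph{finite union of cylinders} (it depends only on coordinates $-L,\dots,0$), and a cylinder of measure zero is an open set disjoint from the support; for a general measure-zero set this inference would be false.
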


\section{Simon's Subshift Conjecture}\label{sec.3}

\subsection{Statement of the Conjecture and Discussion}

Here is the subshift conjecture of Barry Simon (which is \cite[Conjecture~12.8.2]{S2}):

\begin{subcon}
Given a minimal subshift of Verblunsky coefficients which is not periodic, the common essential support of the associated measures has zero Lebesgue measure.
\end{subcon}

Let us clarify the statement. Given any one-sided infinite sequence $\{ \alpha_n \}_{n \ge 0} \subset \D = \{ z \in \C : |z| < 1 \}$, there is an associated probability measure on the unit circle $\partial \D = \{ z \in \C : |z| = 1 \}$ in such a way that the $\alpha_n$'s are the canonical recursion coefficients of the orthogonal polynomials associated with the measure; see \cite{S1} for details. The $\alpha_n$'s are called the Verblunsky coefficients. Consider the case where the Verblunsky coefficients take values in a fixed finite subset $\mathcal{A}$ of $\D$. In this case, the sequence may be regarded as an element of the compact space $\mathcal{A}^{\Z_+}$ (equipped with product topology). The shift transformation $T : \mathcal{A}^{\Z_+} \to \mathcal{A}^{\Z_+}$ is given by $(T \alpha)_n = \alpha_{n+1}$. A $T$-invariant closed subset $\Omega$ of $\mathcal{A}^{\Z_+}$ is called a subshift  of Verblunsky coefficients. The subshift $\Omega$ is called minimal if the $T$-orbit of every $\alpha \in \Omega$ is dense in $\Omega$. If $\Omega$ is minimal, then either all elements are periodic or all elements are not periodic. In the latter case, we say the subshift is not periodic. Suppose we are given a minimal subshift of Verblunsky coefficients which is not periodic. Then, by \cite[Theorem~12.8.1]{S2}, there is a set $\Sigma \subseteq \partial \D$ such that for each $\alpha \in \Omega$, the topological support of the associated measure minus its isolated points is given by $\Sigma$. The subshift conjecture asserts that $\Sigma$ must have zero Lebesgue measure.

We will disprove the subshift conjecture. In fact, we will shift the setting to the Schr\"odinger operator context for the time being. The reason for doing this is twofold. First, the evidence for the subshift conjecture, at the time \cite{S2} was written, was essentially purely on the Schr\"odinger side and hence it was really the OPUC version of a conjecture that one would want to make in the Schr\"odinger context. (Later it was shown in \cite{DL3} that similar supporting evidence may be obtained in the setting of Verblunsky coefficients.) Second, since the present paper primarily discusses Schr\"odinger operators, the presentation of the proof is somewhat more natural in that setting. We will, however, discuss the modifications necessary when switching to the OPUC setting. In particular, it does turn out that \cite[Conjecture~12.8.2]{S2} as stated is false.

\subsection{Disproof of the Subshift Conjecture in the Schr\"odinger Case}

Let us first state the Schr\"odinger version of the subshift conjecture. Suppose that $\mathcal{A}$ is a finite subset of $\R$. Define the shift transformation $T$ of the compact space $\mathcal{A}^\Z$ as above and consider a $T$-invariant compact subset $\Omega$ of $\mathcal{A}^\Z$. (Note that we are now considering two-sided sequences of real numbers as opposed to one-sided infinite sequences of elements of $\D$.) Given such a subshift $\Omega$, we consider the familiy $\{ H_\omega \}_{\omega \in \Omega}$ of Schr\"odinger operators given by
$$
[H_\omega \psi](n) = \psi(n+1) + \psi(n-1) + \omega_n \psi(n).
$$
That is, the operators are as in \eqref{e.oper}, where the sampling function is the evaluation at the origin, $f(\omega) = \omega_0$. If $\Omega$ is minimal, then by Proposition~\ref{p.minspec} there is a set $\Sigma \subset \R$ such that $\sigma(H_\omega) = \Sigma$ for every $\omega \in \Omega$. We can now state the announced version of the conjecture.

\begin{subcon}[Schr\"odinger Version]
Given $\mathcal{A} \subset \R$ finite and a minimal subshift $\Omega \subset \mathcal{A}^\Z$ which is not periodic, the associated set $\Sigma$ has zero Lebesgue measure.
\end{subcon}

Let us now show that the Schr\"odinger version of the subshift conjecture fails.

\begin{theorem}\label{t.3}
Given $\mathcal{A} \subset \R$ with $2 \le \mathrm{card} \, \mathcal{A} < \infty$, there is a minimal subshift $\Omega \subset \mathcal{A}^\Z$, which is not periodic, such that the associated set $\Sigma \subset \R$ has strictly positive Lebesgue measure.
\end{theorem}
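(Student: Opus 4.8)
The plan is to reduce the statement to the construction of a single bi-infinite sequence whose spectrum is large, and then to build that sequence by a hierarchical (S-adic) scheme. After the affine change $H\mapsto H-a$, which only translates the spectrum, I may assume that $\CA$ contains $0$ and some $v\ne 0$ and build $\Omega$ inside $\{0,v\}^\Z$. A subshift is minimal precisely when it is the orbit closure of a uniformly recurrent sequence, is aperiodic precisely when that sequence is not periodic, and, by Proposition~\ref{p.minspec}, satisfies $\sigma(H_\omega)=\Sigma$ for all $\omega$ as soon as $T$ is minimal and $f$ is continuous; since here $f(\omega)=\omega_0$ is continuous, it suffices to produce a uniformly recurrent, non-periodic $\omega_\ast\in\{0,v\}^\Z$ with $|\sigma(H_{\omega_\ast})|>0$ and then take $\Omega:=\overline{\{T^n\omega_\ast:n\in\Z\}}$, $\Sigma:=\sigma(H_{\omega_\ast})$.

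For the lower bound on $\sigma(H_{\omega_\ast})$ I would use only a truncated Bloch wave argument. If $b$ is a finite word, $E$ lies in the interior of a band of the periodic operator $H_{b^\infty}$, and the one-period monodromy $M_b(E)$ together with all its partial products and all its powers has norm at most $K$, then a Bloch solution of $H_{b^\infty}u=Eu$ restricted to a stretch of length $\ell$ on which $\omega_\ast$ coincides with the pattern $b^\infty$, and cut off sharply, is an approximate eigenfunction of $H_{\omega_\ast}$ with $\|(H_{\omega_\ast}-E)u\|/\|u\|\le C\,K^{O(1)}\,\ell^{-1/2}$. Hence, if $\omega_\ast$ contains for each $j$ a clean occurrence of $b_j^{r_j}$ (no ``defects'' inside it) with $r_j|b_j|\to\infty$, and $S_j$ denotes the set of energies at which $b_j^\infty$ has such a ``tame band'' with norm bound $K_j=o\bigl(\sqrt{r_j|b_j|}\bigr)$, then $\mathrm{dist}(E,\sigma(H_{\omega_\ast}))=0$ for every $E\in\limsup_j S_j$, so $\limsup_j S_j\subset\sigma(H_{\omega_\ast})$. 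By the reverse Fatou inequality for subsets of a fixed bounded interval, $|\limsup_j S_j|\ge\limsup_j|S_j|$, so the whole problem reduces to keeping $|S_j|\ge\delta$ for a fixed $\delta>0$ while assembling the $b_j$ into a minimal aperiodic sequence.

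The blocks are built inductively. Start from $b_1=0$, for which $H_{b_1^\infty}$ is free and $S_1\supset(-2+\tau,2-\tau)$ with a universal norm bound. Given a tame block $b_j$ with bound $K_j$ on $S_j$, I would form $b_{j+1}$ by concatenating a huge number $r_j$ of copies of $b_j$ together with a small number of ``twin'' copies $\tilde b_j$: words of the same length, different from $b_j$, but with $\|M_{\tilde b_j}(E)-M_{b_j}(E)\|\le\eta_j$ and bounded partial products for every $E$ in $S_j$ outside a set of measure $\le\eta_j$. Choosing the periods $q_j=|b_j|$, the counts $r_j$, and the twin positions so that the overall defect density decreases along the levels, the limit $\omega_\ast$ (the sequence agreeing with $b_j^\infty$ on windows growing to $\Z$) is uniformly recurrent and not periodic; it contains a clean $b_j^{r_j}$ with $r_j|b_j|\to\infty$; and $b_{j+1}^\infty$ is again tame on a set $S_{j+1}$ with $|S_j\setminus S_{j+1}|\lesssim\sqrt{\eta_jK_j^{O(1)}}\,|S_j|+\eta_j$, the main loss coming from the energies where the high power $M_{b_j}(E)^{r_j}$ is near-parabolic — which, as $M_{b_j}(E)$ is elliptic with controlled conjugating matrix on $S_j$, is only a small proportion — and a smaller loss from the twins. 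Taking $\eta_j$ summably small, $K_{j+1}$ only moderately larger than $K_j$, and $q_j,r_j$ growing fast enough that $K_j^{O(1)}=o(\sqrt{r_jq_j})$, one keeps $|S_j|\ge\delta$ for all $j$, and by the previous paragraph $|\Sigma|=|\sigma(H_{\omega_\ast})|\ge\delta>0$.

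The real work is the inductive step, and specifically the existence of the twin blocks $\tilde b_j$; this is exactly the obstruction behind Simon's conjecture. Over a fixed finite alphabet, flipping a single symbol changes the monodromy by an amount of order $|v|$, producing a resonant set of fixed positive proportion and so destroying $\limsup_j S_j$ after infinitely many steps. One must therefore change a slowly growing number of symbols in a nearly cocycle-neutral pattern, so that the net change of $M_{b_j}(E)$ is $o(|v|)$ on most of $S_j$ — such a word should be located by a counting/averaging argument over the exponentially many candidates — while keeping its partial products bounded and keeping it combinatorially different enough from $b_j$ to force aperiodicity and uniform recurrence of the limit. Carrying all three requirements through with quantitative estimates sharp enough to close the induction is where the difficulty lies; the remaining points (minimality, non-periodicity, and the spectral reduction above) are soft.
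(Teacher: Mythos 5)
There is a genuine gap, and you have correctly located it yourself: the induction hinges on the existence of the ``twin'' blocks $\tilde b_j$ whose monodromy is $\eta_j$-close to that of $b_j$ on most of $S_j$, and your proposal neither constructs them nor gives a credible route to doing so. In fact this is the wrong sub-strategy: over a fixed finite alphabet there is no reason to expect cocycle-neutral perturbations, and the paper does not look for any.

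The idea you are missing is that one does not need the new ``spacer'' word to have nearly-identical monodromy — it can be an arbitrary fixed word. The paper forms the next-level blocks as $W_\ell\,w_{\ell,k}^s$, where $W_\ell$ is the concatenation of \emph{all} current blocks (a fixed word, with some fixed monodromy $A$) and $s$ is a large power. The key lemma (Lemma~\ref{l.6}) is then: for any pair of finite words $v,w$,
$$\lim_{m\to\infty}\mathrm{Leb}\Bigl(\Sigma(w)\setminus\bigcup_{k=1}^m\Sigma(vw^k)\Bigr)=0.$$
The proof uses only Floquet theory: on the interior of a band of $\Sigma(w)$ the monodromy of $w$ is conjugate to a rotation $R_{\theta(E)}$, $\theta(E)$ is smooth and strictly monotone, hence irrational for a.e.\ $E$; since $\{k\theta\}_k$ is dense, the product $B^kA$ can be driven elliptic for some $k$, no matter how far $A$ is from the identity. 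This removes the obstruction you describe, and it simultaneously delivers minimality for free, because $W_\ell$ contains all level-$\ell$ blocks and hence all earlier-level words, so every admissible word recurs with bounded gaps in the limit.

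Your upper half is also heavier than the paper's: rather than truncated Bloch waves with the $o(\sqrt{r_jq_j})$ norm bookkeeping, the paper proves $\Sigma\supseteq\limsup_\ell\Sigma_\ell$ directly (Lemma~\ref{l.8}) from Johnson's theorem. If $E\notin\Sigma$ the cocycle over the minimal subshift is uniformly hyperbolic, which forces $\|(A^{E,w}_n)^2\|\approx\|A^{E,w}_n\|^2$ and hence hyperbolicity of the block monodromies for all large $\ell$; this puts $E$ outside $\Sigma_\ell$ eventually. Your Bloch-wave route could likely be made to work, but it would require controlling all partial products along the block, whereas the paper's argument needs no such control. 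So the plan's skeleton — hierarchical blocks, track a set of good energies per level, use a reverse-Fatou/limsup argument to get positive measure — coincides with the paper's, but the two engines that make it run (Lemma~\ref{l.6} and Lemma~\ref{l.8}) are both replaced in your proposal by steps that are either missing or substantially harder.
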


\begin{proof}
We first introduce some notation. For a finite word $w$ over $\mathcal{A}$ of length $n$ and an energy $E$, let us write the monodromy matrix over $w$ corresponding to energy $E$ as $A^{E,w}_n$. Explicitly,
$$
A^{E,w}_n = \begin{pmatrix} E - w_n & - 1 \\ 1 & 0 \end{pmatrix} \cdots \begin{pmatrix} E - w_1 & - 1 \\ 1 & 0 \end{pmatrix}.
$$
The corresponding periodic spectrum is given by
$$
\Sigma(w) = \left\{ E : | \mathrm{Tr} A^{E,w}_n | \le 2 \right\}.
$$

The construction will be iterative. Fix an integer $k_1 \ge 2$. We choose $k_1$ words
$$
w_{1,1}, \ldots, w_{1,k_1}
$$
over the alphabet $\mathcal{A}$. We associate with them $k_1$ periodic potentials, where $w_{1,j}$ just corresponds to the periodic block of the $j$-th potential. The associated subshift $\Omega_1$ at this stage consists of all two-sided infinite concatenations of the $w_{1,j}$. The only requirement at this stage is that $\Omega_1$ contains non-periodic sequences. This is clearly possible since both the alphabet size and $k_1$ are at least $2$. The associated spectrum at this stage is the union of the $k_1$ periodic spectra,
$$
\Sigma_1 = \bigcup_{j = 1}^{k_1} \Sigma(w_{1,j}).
$$

Now we pass to the second step. We form the word $W_1 = w_{1,1} \cdots w_{1,k_1}$. For each $k \in \{ 1, \ldots, k_1 \}$, we choose an appropriate power $m_{1,k} \ge 2$ (we explain below how to choose it) and form the second step words
$$
W_1 w_{1,k}^{s}, \quad 1 \le s \le m_{1,k} , \; 1 \le k \le k_1.
$$
The new words may be listed as
$$
w_{2,1},\ldots,w_{2,k_2},
$$
they again correspond to periodic potentials, and there is an associated subshift $\Omega_2$ and an associated spectrum $\Sigma_2$, which are defined in a way analogous to step one. Namely, $\Omega_2$ consists of all two-sided infinite concatenations of the $w_{2,k}$ and $\Sigma_2$ is the union of the $k_2$ periodic spectra $\Sigma(w_{2,k})$, $1 \le k \le k_2$.

Now repeat this procedure and obtain $\Omega_\ell$ and $\Sigma_\ell$. Clearly, the subshifts are decreasing, $\Omega_\ell \supseteq \Omega_{\ell+1}$. While the spectra $\Sigma_\ell$ are not necessarily decreasing, the powers $m_{\ell,k}$ in the various steps will be chosen so that
\begin{equation}\label{e.perspecest}
\mathrm{Leb} (\Sigma_\ell \setminus \Sigma_{\ell+1}) < \mathrm{Leb} (\Sigma_1) 2^{-(1+\ell)}.
\end{equation}
To ensure this estimate for appropriate choices of the $m_{\ell,k} \ge 2$, it suffices to prove the following lemma.

\begin{lemma}\label{l.6}
For any pair $v, w$ of finite words, we have
$$
\lim_{m \to \infty} \mathrm{Leb} \left( \Sigma(w) \setminus \bigcup_{k = 1}^m \Sigma(vw^k) \right) = 0.
$$
\end{lemma}

\begin{proof}
It is well known that for a finite word $p$ of length $l$, $\Sigma(p)$ can be written as a union of $l$ compact non-degenerate intervals, called the bands, whose interiors are mutually disjoint.\footnote{Explicitly, the set $\{ E : | \mathrm{Tr} A^{E,p}_l | < 2 \}$ has exactly $l$ connected components and the closure of each of them gives rise to a band.}

Assume $v$ has length $n$ and $w$ has length $\ell$. Consider energies $E$ in the interior of a band of $\Sigma(w)$. Then the following facts are well known.
\begin{itemize}

\item For each $E$ in the interior of $\Sigma(w)$, we have for some $P(E) \in \mathrm{SL}(2,\R)$ and $\theta(E)$ such that
$$
A_\ell^{E,w} = P(E) R_{\theta(E)} P(E)^{-1},
$$
where $R_\theta$ is the rotation matrix of rotation angle $2 \pi \theta$;

\item $\theta(E)$ is smooth and strictly monotone for $E$ in the interior of $\Sigma(w)$.

\end{itemize}

Thus, for Lebesgue almost every $E$ in the interior of $\Sigma(w)$, $\theta(E)$ is irrational. Fix such an energy $E$, let $\theta = \theta(E)$, $P = P(E)$, $A = A^{E,v}_n$ and $B = A^{E,w}_\ell$. We consider
$$
B^k A = P R_{k \theta} P^{-1} A P P^{-1}.
$$
By polar decomposition, we have for some $\alpha$ and $\beta$,
$$
P^{-1} A P = R_\alpha \begin{pmatrix} \|P^{-1} A P\| & 0 \\ 0 & \|P^{-1} A P\|^{-1} \end{pmatrix} R_\beta.
$$
Now it is straightforward that the composition will be elliptic if $\|\alpha+ \beta + k \theta\|_{\R/\Z}$ is sufficiently close to $\frac{1}{4}$. Since $\{k \theta\}_{k \ge 1}$ is dense in $\R/\Z$, we can find such a $k$. Thus, for Lebesgue almost every $E$ in $\Sigma(w)$, there exists $k \ge 1$ such that $E \in \Sigma(vw^k)$. This implies
$$
\lim_{m \to \infty} \mathrm{Leb} \left( \Sigma(w) \setminus \bigcup_{k = 1}^m \Sigma(vw^k) \right) = 0,
$$
as claimed.
\end{proof}

It follows that we can indeed ensure that \eqref{e.perspecest} holds. Now we consider the limit subshift
$$
\Omega = \lim_{\ell \to \infty} \Omega_\ell = \bigcap_{\ell \ge 1} \Omega_\ell.
$$
Clearly, $\Omega$ is a subshift. Furthermore, we have the following:

\begin{lemma}\label{l.7}
The subshift $\Omega$ is minimal and non-periodic.
\end{lemma}

\begin{proof}
It is a standard result that minimality in the subshift context is equivalent to the fact that every finite word, which occurs in some element of $\Omega$, occurs in every element of $\Omega$ infinitely often with bounded gaps between consecutive occurrences; see, for example, \cite{Q}. Now recall that
$$
w_{1,j},\ 1 \le j \le k_1
$$
are our first steps words. Then by induction we have
\begin{equation}\label{e.words1}
W_\ell = w_{\ell,1} \cdots w_{\ell,k_\ell}
\end{equation}
and
\begin{equation}\label{e.words2}
w_{\ell+1, k'} = W_\ell w_{\ell,k}^s,\ 1 \le s \le m_{\ell,k},\ 1 \le k \le k_\ell.
\end{equation}
We claim the following facts:
\begin{itemize}

\item $W_\ell$ occurs with bounded gaps in $\Omega_{\ell+1}$ and $n_\ell = |W_\ell| \to \infty$ as $\ell \to \infty$.

\item $W_\ell$ contains all possible finite words of length less than or equal to $n_{\ell-2}$ that occur in elements of $\Omega_\ell$.

\end{itemize}
It is clear that these two facts imply minimality since
$$
\Omega = \lim_{\ell \to \infty} \Omega_\ell = \bigcap_{\ell \ge 1} \Omega_\ell.
$$
For the proof of the two facts, the first is straightforward by construction. For the second one, we only need to note that all elements in $\Omega_\ell$ are of the form
$$
\cdots W_{\ell-1} w_{\ell-1,k}^s W_{\ell-1} w_{\ell-1,k'}^{s'} W_{\ell-1} \cdots
$$
and each $w_{\ell-l,k}$ contains $W_{\ell-2}$. Thus, all possible words of length less than or equal to $n_{\ell-2}$ in each element of $\Omega_\ell$ are contained in
$$
W_{\ell-1} w_{\ell-1,k}^{s} W_{\ell-1}, \quad 1 \le k \le k_{\ell-1},\ 1 \le s \le m_{\ell-1,k}.
$$
By \eqref{e.words1} and \eqref{e.words2}, $W_\ell$ contains all words of the above form, except for $W_{\ell-1} w_{\ell-1,k_{\ell-1}}^{m_{\ell-1,k_{\ell-1}}} W_{\ell-1}$. But since $m_{\ell-1,k_{\ell-1}} \ge 2$, it is still true that all words of length less than or equal to $n_{\ell-2}$ occurring in $W_{\ell-1} w_{\ell-1,k_{\ell-1}}^{m_{\ell-1,k_{\ell-1}}} W_{\ell-1}$ already occur in $W_{\ell-1} w_{\ell-1,k_{\ell-1}}^{m_{\ell-1,k_{\ell-1}}-1} W_{\ell-1}$, which in turn does occur in $W_\ell$. Thus, minimality of $\Omega$ follows.

To show that $\Omega$ is not periodic, it suffices to show that for each $l_0 < \infty$, there is a length $l \ge l_0$ such that $\Omega$ contains a word $w$ of length $l$ that has at least two right-extensions $wa, wb$, $a \not= b$, both occurring in $\Omega$ (again, compare \cite{Q}). The latter property is obvious from the construction and our initial choice of the first step words. This completes the proof of Lemma~\ref{l.5}.
\end{proof}

Now by minimality, the spectrum of the operators $H_{\omega}$ is independent of $\omega \in \Omega$ and hence may be denoted by $\Sigma$. To get the estimate $\mathrm{Leb} (\Sigma) > 0$, we need the following lemma.

\begin{lemma}\label{l.8}
Let $\Sigma_\ell$ and $\Sigma$ be as in the construction. Assume $m_{\ell,k}\ge 2$ for all $\ell \ge 1$ and $k \ge 1$. Then,
$$
\Sigma \supseteq \limsup_{\ell \to \infty} \Sigma_\ell = \bigcap_{L \ge 1} \bigcup_{\ell \ge L} \Sigma_\ell.
$$
\end{lemma}

\begin{proof}
Fix $E \notin \Sigma$ and consider the sampling function $f : \Omega \to \R$, $f(\omega) = \omega_0$. Then, by minimality of $\Omega$, the cocycle $(T,A^{E-f}) : \Omega \times \R^2 \to \Omega \times \R^2$ is uniformly hyperbolic; compare Proposition~\ref{p.johnson}. Hence there exist continuous maps $B : \Omega \to \mathrm{PSL}(2,\R)$ and $r : \Omega \to \R^+$ such that
$$
A^{E-f}(\omega) = B(T \omega)^{-1} \begin{pmatrix} r(\omega) & 0 \\ 0 & r^{-1}(\omega) \end{pmatrix} B(\omega).
$$
Furthermore, $r$ can be chosen such that
$$
r_n(\omega) = \prod^{n-1}_{k=0} r(T^k \omega) > c \lambda^n,
$$
for some $c > 0$ and $\lambda > 1$. Thus there exists a constant $C$ such that for all $\omega \in \Omega$ and all $n \ge 1$,
$$
C^{-1} r_n(\omega) < \|A^{E-f}_n(\omega)\| < C r_n(\omega).
$$
It is easy to see, by polar decomposition, that for any $A \in \mathrm{SL}(2,\R)$ and $\delta > 0$, if $\|A\|$ is sufficiently large, then $\|A^2\| > C \|A\|^{1+\delta}$ implies that $A$ is hyperbolic.

Now by construction, if we choose $m_{\ell, k}\ge 2$, then all finite words $w_{\ell, k}^2$ occur in elements of $\Omega$. Also we note that $|w_{\ell,k}| \to \infty$ as $\ell \to \infty$, uniformly in $k$.

Now for simplicity, let $w = w_{\ell,k}$ and $n = |w|$, and pick $\omega \in \Omega$ such that $w^2$ is the finite word that occurs in the $[0, \ldots, 2n-1]$ position of $\omega$. Then there exists $L \in \Z^+$ such that for all $\ell > L$, $\|A^{E,w}_n\|$ is sufficiently large and
\begin{align*}
\|(A^{E,w}_n)^2\| & = \|A^{(E-f)}_{2n}(\omega)\| \\
& > C^{-1} r_{2n}(\omega) \\
& = C^{-1} r_n(T^n \omega) \, r_n(\omega) \\
& > C^{-3} \|A^{(E-f)}_n (T^n \omega)\| \cdot \|A^{(E-f)}_n(\omega)\| \\
& = C^{-3} \|A^{E,w}_n\|^2.
\end{align*}
Thus, $A^{E,w}_n$ is hyperbolic. This implies that $E \not\in \Sigma_\ell$ for all $\ell > L$. In particular, it follows that
$$
E \notin \bigcap_{L \ge 1} \bigcup_{\ell \ge L} \Sigma_\ell,
$$
which completes the proof of the lemma.
\end{proof}

We can now conclude the proof of Theorem~\ref{t.3}. By Lemma~\ref{l.7}, the subshift $\Omega$ we constructed is minimal and not periodic. From Lemma~\ref{l.8} we may infer that
$$
\Sigma \supseteq \limsup_{\ell \to \infty} \Sigma_\ell = \bigcap_{L \ge 1} \bigcup_{\ell \ge L} \Sigma_\ell \supseteq \left( \left( \Sigma_1 \setminus (\Sigma_1 \setminus \Sigma_2) \right) \setminus (\Sigma_2 \setminus \Sigma_3) \right) \setminus \cdots.
$$
Thus, by estimate \eqref{e.perspecest}, we must have $\mathrm{Leb} (\Sigma) \ge \frac{1}{2} \mathrm{Leb} (\Sigma_1) > 0$.
\end{proof}

\subsection{Disproof of the Subshift Conjecture in the OPUC Case}

Here is the OPUC analog of Theorem~\ref{t.3}, which disproves the subshift conjecture in its original formulation:

\begin{theorem}\label{t.3b}
Given $\mathcal{A} \subset \D$ with $2 \le \mathrm{card} \, \mathcal{A} < \infty$, there is a minimal subshift $\Omega \subset \mathcal{A}^{\Z_+}$, which is not periodic, such that the associated set $\Sigma \subset \partial \D$ has strictly positive Lebesgue measure.
\end{theorem}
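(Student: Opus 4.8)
The plan is to rerun the iterative construction from the proof of Theorem~\ref{t.3} verbatim, replacing words over $\mathcal{A}\subset\R$ by words over the finite alphabet $\mathcal{A}\subset\D$ of Verblunsky coefficients and the Schr\"odinger monodromy matrices by the Szeg\H{o} monodromy matrices. For a finite word $w=\alpha_1\cdots\alpha_n$ over $\mathcal{A}$ and $z\in\partial\D$ one forms the product of the Szeg\H{o} transfer matrices, suitably normalized into $\mathrm{SU}(1,1)\cong\mathrm{SL}(2,\R)$, obtaining a monodromy matrix $T^{z,w}_n$, and sets $\Sigma(w)=\{z\in\partial\D:|\mathrm{Tr}\,T^{z,w}_n|\le 2\}$. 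By the standard Floquet theory of periodic OPUC (see \cite{S1}), $\Sigma(w)$ is the essential support of the absolutely continuous part of the measures associated with the $n$-periodic continuation of $w$, and it is a union of finitely many non-degenerate bands with pairwise disjoint interiors. The combinatorial skeleton of the construction --- the choice of first-step words with the alphabet and $k_1$ of size at least $2$, the concatenation rule $w_{\ell+1,k'}=W_\ell w_{\ell,k}^{s}$, and the limit subshift $\Omega=\bigcap_\ell\Omega_\ell$ --- is unchanged, so Lemma~\ref{l.7} applies word for word and shows that $\Omega$ is minimal and non-periodic; the only input here is $\mathrm{card}\,\mathcal{A}\ge 2$.

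Two analytic ingredients then have to be transported to the OPUC setting. The first is the analog of Lemma~\ref{l.6}: for any finite words $v,w$ over $\mathcal{A}$, $\mathrm{Leb}\!\left(\Sigma(w)\setminus\bigcup_{k=1}^m\Sigma(vw^k)\right)\to 0$ as $m\to\infty$. The argument is the same as in the Schr\"odinger case: on the interior of each band of $\Sigma(w)$ the monodromy $T^{z,w}_n$ is conjugate to a rotation $R_{\theta(z)}$, with $\theta$ smooth and strictly monotone in the arc-length parameter on $\partial\D$ (again a standard fact of periodic OPUC Floquet theory), so $\theta(z)$ is irrational for Lebesgue-a.e.\ $z\in\Sigma(w)$; for such $z$, a polar decomposition of the conjugated transfer matrix of $v$ together with the density of $\{k\theta(z)\}_{k\ge 1}$ in $\R/\Z$ produces $k$ for which the monodromy of $vw^k$ is elliptic, i.e.\ $z\in\Sigma(vw^k)$. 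Consequently, exactly as before, the powers $m_{\ell,k}\ge 2$ can be chosen so that $\mathrm{Leb}(\Sigma_\ell\setminus\Sigma_{\ell+1})<\mathrm{Leb}(\Sigma_1)\,2^{-(1+\ell)}$ for all $\ell$.

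The second ingredient is the analog of Lemma~\ref{l.8}, namely $\Sigma\supseteq\limsup_{\ell\to\infty}\Sigma_\ell$. This uses the OPUC analog of Johnson's theorem (Proposition~\ref{p.johnson}): for a minimal subshift of Verblunsky coefficients, a point $z\in\partial\D$ lies outside the spectrum of the associated CMV operators if and only if the Szeg\H{o} cocycle $(T,A^z)$ over the shift is uniformly hyperbolic; this characterization is available in the literature and can alternatively be derived from the Schr\"odinger case via the Szeg\H{o} mapping, and since the common essential support $\Sigma$ differs from the common spectrum by at most the countable (hence Lebesgue-null) set of isolated points, $\mathrm{Leb}(\Sigma)$ equals the measure of that spectrum. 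Granting uniform hyperbolicity, the doubling argument of Lemma~\ref{l.8} carries over unchanged: one gets $C^{-1}r_n(\omega)<\|A^z_n(\omega)\|<Cr_n(\omega)$ with $r_n(\omega)>c\lambda^n$ uniformly in $\omega$, and since $|w_{\ell,k}|\to\infty$ uniformly in $k$ and each $w_{\ell,k}^2$ occurs in $\Omega$ (because $m_{\ell,k}\ge 2$), for all large $\ell$ one obtains, with $n=|w_{\ell,k}|$, that $\|(T^{z,w_{\ell,k}}_n)^2\|>C^{-3}\|T^{z,w_{\ell,k}}_n\|^2$ with $\|T^{z,w_{\ell,k}}_n\|$ large, which by polar decomposition forces $T^{z,w_{\ell,k}}_n$ to be hyperbolic, hence $z\notin\Sigma_\ell$. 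Thus $z$ outside the spectrum implies $z\notin\limsup_\ell\Sigma_\ell$, and combining with the measure estimate above gives $\mathrm{Leb}(\Sigma)\ge\frac12\mathrm{Leb}(\Sigma_1)>0$, with $\Sigma\subset\partial\D$ a common essential support.

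The only genuinely new work beyond this bookkeeping is pinning down the OPUC transfer-matrix/discriminant formalism --- normalizing the Szeg\H{o} matrices into $\mathrm{SU}(1,1)$ and handling the factor $z^{-n/2}$ in the discriminant so that the band structure and the monotonicity of the rotation number along $\partial\D$ are stated correctly --- together with invoking (or reproving) the OPUC analog of Johnson's theorem over a one-sided minimal subshift. This last point is where I expect the main obstacle to lie; everything else is a direct translation of the Schr\"odinger argument.
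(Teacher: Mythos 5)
Your proposal follows essentially the same route as the paper: keep the combinatorial construction and Lemma~\ref{l.7} verbatim, replace Lemma~\ref{l.6} by its OPUC analog via periodic Szeg\H{o}/CMV Floquet theory (the paper cites \cite[Section~11.2]{S2}), and replace Lemma~\ref{l.8} by the OPUC uniform-hyperbolicity characterization of the complement of the spectrum (the paper cites \cite{DL3}, which settles the point you flagged as the likely obstacle). Your observation that the essential support differs from the common spectrum only by a countable set of isolated points, and hence has the same Lebesgue measure, is a useful clarification that the paper leaves implicit.
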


\begin{proof}
The proof is analogous to the proof of Theorem~\ref{t.3}. Let us discuss the necessary adjustments. The Floquet theory underlying the proof of Lemma~\ref{l.6} has an OPUC analog, compare \cite[Section~11.2]{S2}, and using it, the OPUC analog of the lemma may be established. The characterization of the complement of $\Sigma$ in terms of uniform hyperbolicity underlying the proof of Lemma~\ref{l.8} has an OPUC version as well, see \cite{DL3}, and as a consequence, this lemma carries over also. Given these two ingredients, one can construct the desired subshift in the exact same way and prove that it is minimal and non-periodic (note that the proof of Lemma~\ref{l.7} is purely combinatorial and applies to both scenarios), and that the associated set $\Sigma \subset \partial \D$ has strictly positive Lebesgue measure.
\end{proof}

\subsection{A Weaker Positive Result}

After disproving the subshift conjecture above by constructing minimal aperiodic subshifts with positive-measure spectrum, we pursue in this subsection the more modest goal of showing that there is at least some set of zero Lebesgue measure that supports the density of states measure. Note that the density of states measure will depend on the choice of the ergodic measure, while the spectrum was independent of this choice for a given minimal subshift. Thus, the conditions we will impose here will naturally involve both the subshift and the ergodic measure. The sufficient conditions will be contained in Definitions~\ref{d.1} and \ref{d.2} below and the result on the singularity of the density of states measure will be given in Theorem~\ref{t.4} below.

\bigskip

Let us first give an informal description of the idea behind the proof. For simplicity, let us look at the half-line case and denote the boundary condition by $\theta$. Consider the sets
$$
S_{\omega,C,\theta,n} = \{ E \in \R : \|A^{E-f}_k(\omega) u_\theta\| \le C (1 + k),  \; 1 \le k \le n\},
$$
where $u_\theta$ is the initial vector that satisfies the boundary condition, and
$$
S_{\omega,C,\theta} = \bigcap_{n \ge 1} S_{\omega,C,\theta,n}.
$$
Then, by the standard result on the existence of generalized eigenfunctions and the definition of the density of states measure, $\bigcup_C S_{\omega,C,\theta}$ supports the spectral measure of $H_{\omega,\theta}$ and, for every support $\Omega'$ of the fixed ergodic measure $\mu$, $\bigcup_{\omega \in \Omega'} \bigcup_C S_{\omega,C,\theta}$ supports the $\mu$-average of these measures, which we want to prove to be singular.

Thus, our goal is to find a support $\Omega'$ of $\mu$ such that for every $C > 0$,
$$
\mathrm{Leb} \left( \bigcup_{\omega \in \Omega'} S_{\omega,C,\theta} \right) = 0.
$$

For this, it suffices to show that
$$
\lim_{n \to \infty} \mathrm{Leb} \left( \bigcup_{\omega \in \Omega'} S_{\omega,C,\theta,n} \right) = 0.
$$

Notice that $S_{\omega,C,\theta,n}$ only depends on $\omega_1,\ldots,\omega_n$, of which we will assume there are only polynomially many possibilities, say $p(n) \le n^\gamma$ with $\gamma < \infty$. If we can prove that $\mathrm{Leb} \left( S_{\omega,C,\theta,n} \right)$ is correspondingly small, uniformly in $\omega$, we are therefore done. That is, we need
$$
\sup_{\omega \in \Omega'} \mathrm{Leb} \left( S_{\omega,C,\theta,n} \right) = o(n^{-\gamma}).
$$

Now, assuming aperiodicity, use that by Kotani (cf.~Proposition~\ref{p.kotani}), $L(E) > 0$ for Lebesgue almost every $E$. Show that as $E$ varies, the most contracted direction of $A^{E-f}_k(\omega)$ moves with a velocity that is bounded away from zero. Thus, for the energies $E$ with $A^{E-f}_k(\omega)$ super-polynomially large, only a super-polynomially small set of those energies will have $\|A^{E-f}_k(\omega) u_\theta\| \le C (1 + k)$ (since $\theta$ is fixed and serves as a target that needs to be close to the most contracted direction of $A^{E-f}_k(\omega)$). This proves what we need and establishes the existence of a zero measure set that supports the $\mu$-average of the spectral measures.

\bigskip

Now let us turn to the whole-line case at hand. We again use the description of the support in terms of generalized eigenfunctions. But this time, we use that $A^{E-f}_k(\omega)$ is large for both negative and positive $k$, and as $E$ varies, both the negative and the positive half-line have the direction of most contraction moving with velocity bounded away from zero, and they move in opposite directions! Thus, again, a match sufficient to ensure the linear estimate can only occur on a super-polynomially small set of energies since the norms are super-polynomially large.

\bigskip

We formulate this in the following lemma:

\begin{lemma}\label{l.9}
Given $\eta > 0$, $M > 0$, and a polynomial $p(x) \in \R[X]$, there exists $N_0$ such that for every $n \geq N_0$, the following holds. If we consider Schr\"odinger matrices
$$
A^E(l) = A^{E-v_l} = \begin{pmatrix} E - v_l & - 1 \\ 1 & 0 \end{pmatrix},
$$
where $-M < v_l < M$, $l \in [-n,n-1]$, the products
$$
A^E_m = \begin{cases} A^E(m-1) \cdots A^E(0) & m \ge 1, \\ I & m = 0, \\ (A^E(m))^{-1} \cdots (A^E(-1))^{-1} & m \le -1, \end{cases}
$$
and the set $\CE$ consisting of those energies $E \in \R$ satisfying the following two conditions,
\begin{itemize}

\item there exist $k_1, k_2 > 0$ such that
$$
\|A^E_{k_1}\| > e^{\eta n},\ \|A^E_{-k_2}\| > e^{\eta n};
$$

\item there exists a unit vector $w = w(E) \in \R^2$ with $\|A^E_l w\| < p(n)$ for all $l \in [-n,n]$,

\end{itemize}
then $\mathrm{Leb}(\CE) < C n^3 p(n) e^{-\eta n}$ for some universal constant $C>0$.
\end{lemma}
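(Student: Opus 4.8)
The strategy is that the two bullets, taken together, pin the unit vector $w=w(E)$ to a pair of directions in $\mathbb{RP}^1$ that, as $E$ varies, travel monotonically in \emph{opposite} senses at a speed bounded below by a positive constant; a resonance between the two — forced by the first bullet — can therefore occur only on a super-polynomially small energy set. First, a harmless reduction: if $|E|>M+2$, every $A^E(l)=\bigl(\begin{smallmatrix}E-v_l&-1\\1&0\end{smallmatrix}\bigr)$, $l\in[-n,n-1]$, expands a common cone at a uniform rate $\lambda=\lambda(M)>1$, so any unit $w$ satisfies $\|A^E_l w\|\ge\lambda^{|l|}$ either for all $l\ge1$ or for all $l\le-1$, contradicting the second bullet once $\lambda^n>p(n)$. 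Hence, for $n\ge N_0$, $\CE\subseteq I_M:=[-(M+2),M+2]$, and all estimates below are uniform over $I_M$.

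Fix $k_1,k_2\in\{1,\dots,n\}$ and let $\CE_{k_1,k_2}\subseteq\CE$ be the set of those $E$ for which the indices in the first bullet may be chosen equal to $k_1$ and $k_2$, so that $\CE=\bigcup_{k_1,k_2}\CE_{k_1,k_2}$. For $E\in\CE_{k_1,k_2}$, decompose $w=w(E)$ along the singular directions of $A^E_{k_1}$: from $\|A^E_{k_1}w\|<p(n)$ and $\|A^E_{k_1}\|>e^{\eta n}$ one reads off that $w$ makes an angle $<\tfrac{\pi}{2}p(n)e^{-\eta n}$ with the most contracted direction $s^+(E)\in\mathbb{RP}^1$ of $A^E_{k_1}$, and likewise with the most contracted direction $s^-(E)$ of $A^E_{-k_2}$. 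Next I replace $s^\pm$ by concretely monotone objects. For a probe $q\in\{[0:1],[1:0]\}$ let $\psi_q(E)\in\mathbb{RP}^1$ be the image of $q$ under the projective action of $(A^E_{k_1})^{-1}$, and $\chi_q(E)$ the image of $q$ under $(A^E_{-k_2})^{-1}$. Because $(A^E_{k_1})^{-1}$ is strongly contracting on $\mathbb{RP}^1$ — it carries all of $\mathbb{RP}^1$ except a small neighbourhood of one direction into an $O(e^{-2\eta n})$-neighbourhood of its attracting direction, which is precisely $s^+(E)$ — and because the two probes lie at distance $\tfrac{\pi}{2}$ in $\mathbb{RP}^1$, at least one of them, $q^+=q^+(E)$, avoids that exceptional neighbourhood, so $\angle(\psi_{q^+(E)}(E),s^+(E))=O(e^{-2\eta n})$; similarly one gets $q^-(E)$ with $\chi_{q^-(E)}(E)$ that close to $s^-(E)$. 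Combining with the pinning, $\angle\bigl(\psi_{q^+(E)}(E),\chi_{q^-(E)}(E)\bigr)<\rho_n:=C_1 p(n)e^{-\eta n}$ for all $E\in\CE_{k_1,k_2}$, with $C_1$ universal.

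It remains to bound, for each of the $\le4$ pairs $(q^+,q^-)$, the measure of $\{E\in I_M:\angle(\psi_{q^+}(E),\chi_{q^-}(E))<\rho_n\}$. Computing the induced vector field on $\mathbb{RP}^1$, $(\partial_E(A^E(l))^{-1})(A^E(l))=\bigl(\begin{smallmatrix}0&0\\1&0\end{smallmatrix}\bigr)$ generates $\dot\theta=\cos^2\theta\ge0$, whereas $(\partial_E A^E(l))(A^E(l))^{-1}=\bigl(\begin{smallmatrix}0&1\\0&0\end{smallmatrix}\bigr)$ generates $\dot\theta=-\sin^2\theta\le0$; since $(A^E_{k_1})^{-1}$ is a composition of maps of the first type and $(A^E_{-k_2})^{-1}=A^E(-1)\cdots A^E(-k_2)$ of the second, $\psi_q$ is nondecreasing and $\chi_q$ nonincreasing in $E$. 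The crucial point — and the main obstacle — is that on the \emph{fixed} window $I_M$ these derivatives are bounded away from $0$: $\partial_E\psi_q\ge c_0$, $\partial_E\chi_q\le-c_0$ for a constant $c_0=c_0(M)>0$ independent of the window length and of $k_1,k_2$. This is the standard monotonicity of the Pr\"ufer (Riccati) equation of a Schr\"odinger cocycle on a bounded energy range (cf.\ \cite{J}); the only subtlety is a probe sitting at the degenerate direction at the very end of the window, dealt with by absorbing one transfer step (and for $n\ge N_0$ the norm conditions force $k_1,k_2$ large anyway). Granting this, the lifted function $g:=\psi_{q^+}-\chi_{q^-}$ is strictly increasing on $I_M$ with $g'\ge2c_0$, and it winds around $\mathbb{RP}^1=\R/\pi\Z$ at most $k_1+k_2+2\le2n+2$ times there (Sturm oscillation), so $\{E\in I_M:\angle(\psi_{q^+},\chi_{q^-})<\rho_n\}$ is a union of at most $2n+3$ intervals, each of length $\le\rho_n/c_0$. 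Hence $\mathrm{Leb}(\CE_{k_1,k_2})\le 4(2n+3)\rho_n/c_0$, and summing over $k_1,k_2\in\{1,\dots,n\}$ gives $\mathrm{Leb}(\CE)\le Cn^3p(n)e^{-\eta n}$ with $C=C(M)$. Apart from establishing that uniform positive lower bound on the angular speeds on $I_M$, and bookkeeping the winding so that the exponential gain $e^{-\eta n}$ is retained, the argument is elementary singular-value geometry.
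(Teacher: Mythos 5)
Your argument is correct and arrives at the same bound, but it genuinely departs from the paper's proof at two of its three technical stages. The paper begins by disposing of the ``large but elliptic'' energies via \cite[Lemma~2.4]{A} (the $8\pi n^3 e^{-\eta n}$ bookkeeping term), precisely so that it can then work with the stable/unstable \emph{eigendirections} $m_{k_1}(E)$, $m_{-k_2}(E)$ of the now-hyperbolic $A^E_{\pm k}$ and compute their $E$-derivatives by implicit differentiation of the fixed-point equation; this is what produces the closed formula whose denominator $|\prod\hat a^{-2}-1|<1$ yields $|dm_{-k}/dE|>1$. You bypass both steps by working with probe images $\psi_q(E)=(A^E_{k_1})^{-1}\cdot q$ and $\chi_q(E)=(A^E_{-k_2})^{-1}\cdot q$ for $q\in\{[1{:}0],[0{:}1]\}$: these are defined for every $E$ regardless of the type of $A^E_{\pm k}$, so the elliptic case disappears, and for each $E$ at least one probe escapes the repelling wedge and lands $O(e^{-2\eta n})$-close to the most contracted direction, which in turn is $O(p(n)e^{-\eta n})$-close to $w$. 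That is a genuine economy: it removes the appeal to an external lemma and folds the elliptic case into the main argument for free. The trade-off is that your uniform lower bound $\partial_E\psi_q\ge c_0(M)>0$ concerns an orbit of a fixed vector rather than an eigendirection, so the paper's formula is unavailable; you must instead argue through the chain-rule expansion $\partial_E\psi_q=\sum_{j\ge0}\bigl(\prod_{i<j}Dg_i\bigr)\cos^2(\phi_j)$ and observe that the first two terms alone already sum to $\|g_0 v_1\|^{-2}\ge(2M+4)^{-2}$ (using $Dg(\phi)=\|gv_\phi\|^{-2}$ and $\cos^2(g\cdot\phi)=(gv_\phi)(1)^2/\|gv_\phi\|^2$), once $k_1\ge 2$, which the norm hypothesis forces. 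You correctly flag this as the delicate point; it does hold, but it deserves those explicit lines in a complete write-up, since it is the one place your route could most plausibly break. The winding count plays an identical role in both proofs: you invoke Sturm oscillation for the probe phases where the paper counts the $\le n$ spectral gaps of the length-$n$ periodic operator, and either way one gets $O(n)$ crossings per pair $(k_1,k_2)$, hence the final $Cn^3p(n)e^{-\eta n}$ after summing over $k_1,k_2\in[1,n]$ and the four probe pairs.
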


\begin{proof}
It is easy to see that for $E \notin [-M-2, M+2]$, there is an invariant cone field for the sequence $A^E(l),\ l=-n,\ldots,n-1$, inside which the vectors are expanded by $A^E(l)$ for each $l$. Thus for large $n$, we can restrict to the interval $[-M-2,M+2]$.

Given $k_1, k_2 \in [1,n]$, \cite[Lemma~2.4]{A} implies that if we consider the set $\CD \subset \R$ of those energies $E \in  \R$ for which we have
\begin{itemize}

\item $\|A^E_{k_1}\|\geq e^{\eta n}$ and $\|A^E_{-k_2}\|\geq e^{\eta n}$,

\item at least one of $A^E_{k_1}, A^E_{-k_2}$ is not hyperbolic,

\end{itemize}
then $Leb(\CD) \leq 8 \pi ne^{-\eta n}$. Considering all possible pairs $(k_1,k_2)$, we get a set of measure at most $8 \pi n^3e^{-\eta n}$. Thus we only need to estimate the measure of $\CE \setminus \CD$. In other words, we may assume in addition that both $A^E_{k_1}$ and $A^E_{-k_2}$ are hyperbolic.

Denote the angle of most contracted direction of $A\in\mathrm{SL}(2,\R)$ by $s(A)\in\R\PP^1=\R/(\pi\Z)$; let $s_k(E)=s(A^E_k)$ and $u_{k}(E) = s(A^E_{-k})$. Then it is easy to see that for $n$ large enough and $E \in \CE$, we have
$$
|s_{k_1}(E) - u_{k_2}(E)| < C_1 p(n) e^{-\eta n}, \mbox{ for some bounded constant } C_1 > 0.
$$
Thus it is sufficient to estimate the measure of this set. Denote the angle of stable direction of $A^E_k$ by $m_k(E)$. Since the stable direction is contracted at least by $1$, it is easy to see that
$$
|m_{k_1}(E) - s_{k_1}(E)| < C_2 e^{-\eta n} \mbox{ and } |m_{-k_2}(E) - u_{k_1}(E)| < C_2 e^{-\eta n}.
$$
Thus for large $n$, we can instead estimate the measure of the set
$$
\{ E : |m_{k_1}(E) - m_{-k_2}(E)| < C_3 p(n) e^{-\eta n} \}.
$$
We need to study the derivative of $m_k(E)$. Let $w_k(E) = \cot m_k(E)$.

Define the function $F^l(E,y) = A^{E-v_l} \cdot y$, where $A^{E-v_l}$ acts on $y \in \R \cup \{ \infty \}$ as a M\"obius transformation. We suppress $l$ from the notation $F^l$. Set $F_k(E,y) = A^E_k \cdot y$. Then $F_k(E,y) = F(E,F_{k-1}(E,y))$. Note we have $F_{k}(E,w_k(E)) = w_k(E)$, which allows us to calculate $\frac{dw_k}{dE}$. For simplicity, let $a_s(E) = F_s(E,w_k(E))$ and $a_0(E) = w_k(E)$. Note we also have $a_k(E)=a_0(E)=w_k(E)$. We first assume that $w_k(E) \neq \infty$. So we get
\begin{align*}
\frac{da_k}{dE}(E) & = \frac{\partial F}{\partial E}(E,a_{k-1}(E)) + \frac{\partial F}{\partial y} (E,a_{k-1}(E)) \frac{da_{k-1}}{dE}(E) \\
& = 1 + \frac{1}{a_{k-1}^2(E)} \frac{da_{k-1}}{dE}(E) \\
& = 1 + \sum^{k-1}_{l=1} \prod^{l}_{j=1} \frac{1}{a_{k-j}^2(E)} + \frac{dw_k}{dE}(E) \prod^{k}_{j=1} \frac{1}{a_{k-j}^2(E)} \\
& = \frac{dw_k}{dE}(E).
\end{align*}
Thus we get
$$
\frac{dw_k}{dE}(E) = \frac{1 + \sum^{k-1}_{l=1} \prod^{l}_{j=1} a_{k-j}^{-2}(E)}{1 - \prod^{k}_{j=1}a_{k-j}^{-2}(E)} = -\frac{1}{1 + [\frac{\pi}{2} - m_k(E)]^2} \frac{dm_k}{dE}(E).
$$
On the other hand, it is easy to see that
$$
A^E_k \binom{w_k(E)}{1} = \prod^{k}_{j=1} a_{k-j}(E) \binom{w_k(E)}{1},
$$
which implies that $\prod^{k}_{j=1} a_{k-j}(E)$ is the eigenvalue corresponds to the stable direction of $A^E_k$. Thus, we must have $|\prod^{k}_{j=1} a_{k-j}(E)| < 1$.
Hence,
$$
\frac{dm_k}{dE}(E) = \frac{1 + \sum^{k-1}_{l=1} \prod^{l}_{j=1} a_{k-j}^{-2}(E)}{\prod^{k}_{j=1} a_{k-j}^{-2}(E) - 1} \left( 1 + \Big[ \frac{\pi}{2} - m_k(E) \Big]^2 \right) > 0.
$$

Now to compute $\frac{dm_{-k}}{dE}(E)$, just note that $m_{-k}(E)$ is the unstable direction of $(A^E_{-k})^{-1}$ (assume again $w_{-k}(E) =\cot m_{-k}(E) \neq \infty$). Thus by $(A^E_{-k})^{-1} \cdot w_{-k}(E) = w_{-k}(E)$ and by exactly the same procedure for computing $\frac{dm_k}{dE}(E)$, we get
$$
\frac{dm_{-k}}{dE}(E) = \frac{1 + \sum^{k-1}_{l=1} \prod^{l}_{j=1} \hat a_{k-j}^{-2}(E)}{\prod^{k}_{j=1} \hat a_{k-j}^{-2}(E) - 1} \left( 1 + \Big[ \frac{\pi}{2} - m_{-k}(E) \Big]^2 \right),
$$
where $\hat a_s(E) = F_s(E,w_{-k}(E))$ and $\hat a_0(E) = w_{-k}(E)$. Now since $m_{-k}$ is the unstable direction, we must have $|\prod^{k}_{j=1} \hat a_{k-j}(E)| > 1$. Hence,
$$
\frac{dm_{-k}}{dE}(E) < 0.
$$
Note it might seem that if $w_k(E) = a_0(E) = 0$, then the formulas for $\frac{dm_ k}{dE}(E)$ may have some problems. But then we have $a_1(E)a_0(E) = (E - v_0 - \frac{1}{a_0(E)}) a_0(E) = -1$. Thus $\prod^{k}_{j=1} a_{k-j}(E)$ as eigenvalue of $A^E_k(0)$ always makes sense. Similarly for the case $w_{-k}(E) = 0$.

Now we estimate $|\frac{dm_{-k_1}}{dE}(E) - \frac{dm_{-k_2}}{dE}(E)|$. This is immediate since $0 < |\prod^{k_2}_{j=1} \hat a_{k_2-j}^{-2}(E) - 1| < 1$ implies that
$$
|\frac{dm_{-k_1}}{dE}(E) - \frac{dm_{-k_2}}{dE}(E)| > |\frac{dm_{-k_2}}{dE}(E)| > 1.
$$
This implies that whenever $m_{k_1}(E)$ is sufficiently close to $m_{-k_2}(E)$, $|m_{k_1}(E) - m_{-k_2}(E)| < C_3 p(n) e^{-\eta n}$ only for $E$ in an interval of size $C_3 p(n) e^{-\eta n}$.

If $w_k(E) = \infty$ or $w_{-k}(E) = \infty$, say $w_k(E) = \infty$, we define $b_s(E) = \frac{1}{F_s(E,w_k(E))}$ and $b_0(E) = \tan m_k(E)$. Then we have
\begin{align*}
\frac{db_k}{dE}(E) & = b_k(E)^2 [\frac{db_{k-1}}{dE}(E) - 1] \\
& = \frac{db_0}{dE}(E) \prod^{k-1}_{j=0} b_{k-j}^2(E) - \sum^{k-1}_{l=0} \prod^{l}_{j=0} b_{k-j}^2(E) \\
& = \frac{db_0}{dE}(E).
\end{align*}
Then we get
$$
\frac{dm_k}{dE}(E) = \frac{\sum^{k-1}_{l=0} \prod^{l}_{j=0} b_{k-j}^2(E)}{\prod^{k-1}_{j=0} b_{k-j}(E)^2 - 1}(1 + m_k(E)^2) > 0.
$$
Again as in the case $w_k(E) \neq \infty$, $\prod^{k-1}_{j=0} b_{k-j}(E)^2 = \prod^{k}_{j=1} b_{k-j}(E)^2 > 1$. From this we can again deduce that $|\frac{dm_k}{dE}(E)| > 1$. Then every other estimate follows the same way as in the case $w_{k_1}(E)$, $w_{-k_2}(E) \neq \infty$.

Finally, we claim that $m_{k_1}(E)-m_{-k_2}(E)=0$ has at most $2n$ roots in $\R\PP^1=\R/(\pi\Z)$ for $E\in[-C-2,C+2]$ and for all $k_1,k_2\in[1,n]$. This is due to the fact that in each spectral gap of the periodic operator $H_v$ of period $n$, the stable direction, $m_n(E)$, and unstable direction, $m_{-n}(E)$, of the $n$-step transfer matrix, $A^{E-v}_n$, can never meet. Since $\frac{dm_{n}}{dE}$ and $\frac{dm_{-n}}{dE}$ have different signs, it is necessary that both of them may wind around $\R \PP^1$ at most one time in each spectral gap. Hence, both of them may wind around $\R \PP^1$ at most $n$ times for $E\in[-C-2,C+2]$ since there are at most $n$ spectral gaps of $H_v$. Now since $k_1,\ k_2\in [1,n]$, both $m_{k_1}(E)$ and $m_{-k_2}(E)$ may wind around $\R\PP^1$ at most $n$ times. Thus, the claim follows from the fact that they move in different directions.

It's clear that the claim implies
$$
\{ E \in \CU \CH : |m_{k_1}(E) - m_{-k_2}(E)| < C_3 p(n) e^{-\eta n}, \mbox{ for some } k_1,\ k_2 \in[1,n] \}
$$
is of Lebesgue measure at most
$$
C_3 n^3 p(n) e^{-\eta n}.
$$
Together with the estimate for elliptic matrices, we get for large $n$,
$$
\mathrm{Leb}(\CE) \leq C n^3 p(n) e^{-\eta n},
$$
concluding the proof.
\end{proof}

We need the following two definitions to state and prove Theorem~\ref{t.4} below.

\begin{defi}\label{d.1}
We call an ergodic subshift $(\Omega,T,\mu)$ almost surely polynomially transitive if for every $\varepsilon > 0$, there exist $\delta > 0$, $C > 0$, and a sequence $n_k \to \infty$, such that for every $k$, there is $\Omega_k \subseteq \Omega$ with $\mu(\Omega_k) > 1 - \varepsilon$ such that for every $\omega \in \Omega_k$, we have
$$
\mu\left( \bigcup_{m = 0}^{C n_k^C} \left[ (T^m \omega)_{[0,n_k-1]} \right] \right) > \delta.
$$
Here, $[\eta_{[0,\ell-1]} ]$ denotes the cylinder set $\{ \omega \in \Omega : \omega_j = \eta_j , \; 0 \le j \le \ell-1 \}$.
\end{defi}

Note that every subshift that is uniformly polynomially transitive (in the sense that there is a polynomial $P$ such that every word of length $P(n)$ that occurs in an element of $\Omega$ contains all words of length $n$ that occur in elements of $\Omega$) is almost surely polynomially transitive with respect to every ergodic measure.

\begin{defi}\label{d.2}
We say that an ergodic subshift $(\Omega,T,\mu)$ is almost surely of polynomial complexity if for every $\varepsilon > 0$, there exists $C > 0$ such that for each $n \in \Z_+$, there is a set $W_n$ of words of length $2n+1$ that has cardinality at most $C n^C$ such that
$$
\mu\left( \bigcup_{w \in W_n} [w]_{-n,\ldots,n} \right) > 1 - \varepsilon.
$$
Here, $[w]_{-n,\ldots,n}$ denotes the cylinder set $\{ \omega \in \Omega : \omega_j = w_j , \; -n \le j \le n \}$.
\end{defi}

Note that every subshift of polynomial complexity (in the sense that there is a polynomial $P'$ such that the number of words of length $n$ occurring in elements of $\Omega$ is bounded by $P'(n)$) is almost surely of polynomial complexity with respect to every ergodic measure.

Then we have the following theorem.

\begin{theorem}\label{t.4}
Suppose the ergodic subshift $(\Omega,T,\mu)$ is aperiodic, almost surely polynomially transitive, and almost surely of polynomial complexity. Then, the associated density of states measure $dk_\mu$ is purely singular.
\end{theorem}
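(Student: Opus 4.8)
The plan is to produce a single Borel set $S\subseteq\R$ with $\mathrm{Leb}(S)=0$ and $dk_\mu(\R\setminus S)=0$. Write $dk_\mu=\int_\Omega\mu_\omega\,d\mu(\omega)$, where $\mu_\omega$ is the spectral measure of $H_\omega$ and $\delta_0$, and recall the Schnol--Berezanskii expansion: for $\mu_\omega$-a.e.\ $E$ there is a nonzero solution of $H_\omega u=Eu$ with $|u(l)|\le C(1+|l|)$, some finite $C=C(E)$. Hence it is enough to find, for each $C\in\N$, a Lebesgue-null set $S^{(C)}$ containing, for $\mu$-a.e.\ $\omega$, the set
$$
S_{\omega,C}:=\bigl\{E:\exists\ \text{unit}\ w\in\R^2,\ \|A^{E-f}_l(\omega)w\|\le C(1+|l|)\ \text{for all}\ l\in\Z\bigr\},\qquad f(\omega)=\omega_0;
$$
indeed then $S:=\bigcup_C S^{(C)}$ satisfies $\mu_\omega(\R\setminus S)\le\mu_\omega(\R\setminus S_{\omega,C})$ for every $C$ and a.e.\ $\omega$, and letting $C\to\infty$ with dominated convergence gives $dk_\mu(\R\setminus S)=0$; since the IDS is continuous this yields pure singularity. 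The construction of $S^{(C)}$ will carry a parameter $\varepsilon$ from Definitions~\ref{d.1} and \ref{d.2}: for each $\varepsilon$ I will build a null set $S^{(C,\varepsilon)}$ and a set $\Omega^{(C,\varepsilon)}$ with $\mu(\Omega^{(C,\varepsilon)})>1-c\varepsilon$ and $S_{\omega,C}\subseteq S^{(C,\varepsilon)}$ for $\omega\in\Omega^{(C,\varepsilon)}$, and then take $S^{(C)}:=\bigcup_m S^{(C,1/m)}$, noting $\bigcup_m\Omega^{(C,1/m)}$ has full measure.

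Fix $C,\varepsilon$. Aperiodicity and Kotani's dichotomy (Proposition~\ref{p.kotani}) give $L(E)>0$ for Lebesgue-a.e.\ $E$; thus $\{L=0\}$ is null and may be absorbed into $S^{(C,\varepsilon)}$, and the rest splits as $\{L>0\}=\bigcup_j Z_j$ with the $Z_j$ pairwise disjoint and $L>2^{-j}=:2\eta_j$ on $Z_j$. It suffices to capture $S_{\omega,C}\cap Z_j$, for each $j$ and for $\omega$ in a large set, inside a Lebesgue-null set. Apply Definition~\ref{d.1} with this $\varepsilon$ to get $\delta>0$, an exponent $\gamma$, a sequence $n_k\to\infty$ (which, after thinning, we may take with $n_k\ge k^3$), and sets $\Omega_k$ with $\mu(\Omega_k)>1-\varepsilon$; put $N_k:=\gamma n_k^{\gamma}+n_k$, a polynomial in $n_k$. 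Apply Definition~\ref{d.2} at scale $N_k$ to get a set $W_{N_k}$ of at most $c\,n_k^{c}$ words of length $2N_k+1$ whose cylinders cover $\mu$-measure $>1-\varepsilon$. Note $S_{\omega,C,N_k}:=\{E:\exists\ \text{unit}\ w,\ \|A^{E-f}_l(\omega)w\|\le C(1+|l|),\ |l|\le N_k\}$ depends only on $\omega_{[-N_k,N_k]}$ and $S_{\omega,C}=\bigcap_k S_{\omega,C,N_k}$.

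The core step is to supply Lemma~\ref{l.9} with its hypothesis that the transfer matrices are large on both sides, uniformly in the word. For $E\in Z_j$, since $\tfrac1n\log\|A^{E-f}_n(\omega)\|\to L(E)>2\eta_j$ in $\mu$-measure, there is a finite $K_j(E)$ with $\mu(\{\omega':\|A^{E-f}_{n_k}(\omega')\|\le e^{\eta_j n_k}\})<\delta/2$ for all $k\ge K_j(E)$; put $Z_j^{(k)}:=\{E\in Z_j:K_j(E)\le k\}$, an increasing exhaustion of $Z_j$. For $\omega\in\Omega_k$ the subwords $\omega_{[m,m+n_k-1]}$, $0\le m\le\gamma n_k^\gamma$, have cylinders of total $\mu$-measure $>\delta$; if $E\in Z_j^{(k)}$ these cannot all lie in the ``$E$-bad'' set of length-$n_k$ words (monodromy $\le e^{\eta_j n_k}$), which has $\mu$-measure $<\delta/2$, so some subword of $\omega$ starting in $[0,N_k]$ has monodromy $>e^{\eta_j n_k}$, and the cocycle identity forces $\|A^{E-f}_{k_1}(\omega)\|>e^{\eta_j n_k/2}$ for some $k_1\in[1,N_k]$. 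Running the same argument at $T^{-N_k}\omega$ (using $\mu(T^{N_k}\Omega_k)=\mu(\Omega_k)$) produces, for $E\in Z_j^{(k)}$, a subword of $\omega$ lying at strictly negative positions, hence $\|A^{E-f}_{-k_2}(\omega)\|>e^{\eta_j n_k/2}$ for some $k_2\in[1,N_k]$. Consequently, for $\omega\in\Omega_k\cap T^{N_k}\Omega_k$ and $E\in S_{\omega,C,N_k}\cap Z_j^{(k)}$, both conditions of Lemma~\ref{l.9} hold with half-window $N_k$, polynomial $p(x)=2C(1+x)$, and the threshold $e^{\eta_j n_k/2}$ in place of $e^{\eta n}$; a straightforward inspection of the proof of Lemma~\ref{l.9} shows it yields the same estimate with any threshold exceeding a suitable fixed power of the window length, so
$$
\mathrm{Leb}\bigl(S_{\omega,C,N_k}\cap Z_j^{(k)}\bigr)\ \le\ C\,N_k^3\,p(N_k)\,e^{-\eta_j n_k/2},
$$
uniformly in such $\omega$, i.e.\ uniformly over the words $w=\omega_{[-N_k,N_k]}$ that arise.

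To finish, set $\hat\Omega_k:=\Omega_k\cap T^{N_k}\Omega_k\cap\{\omega:\omega_{[-N_k,N_k]}\in W_{N_k}\}$, of measure $>1-c\varepsilon$. Then $\bigcup_{\omega\in\hat\Omega_k}(S_{\omega,C,N_k}\cap Z_j^{(k)})$ is a union of at most $|W_{N_k}|\le c\,n_k^c$ sets of Lebesgue measure $\le C N_k^3 p(N_k)e^{-\eta_j n_k/2}$, hence has measure bounded by a polynomial in $n_k$ times $e^{-\eta_j n_k/2}$, which is summable in $k$ since $n_k\ge k^3$. By Borel--Cantelli, $S^{(C,\varepsilon)}_j:=\limsup_k\bigcup_{\omega\in\hat\Omega_k}(S_{\omega,C,N_k}\cap Z_j^{(k)})$ is Lebesgue-null; and if $\omega\in\limsup_k\hat\Omega_k$ (which has measure $>1-c\varepsilon$ by reverse Fatou) and $E\in S_{\omega,C}\cap Z_j$, then $E\in Z_j^{(k)}$ for all large $k$ and $\omega\in\hat\Omega_k$ infinitely often, so $E\in S^{(C,\varepsilon)}_j$. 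Thus $S^{(C,\varepsilon)}:=\{L=0\}\cup\bigcup_j S^{(C,\varepsilon)}_j$ (null, as a countable union of null sets) and $\Omega^{(C,\varepsilon)}:=\limsup_k\hat\Omega_k$ complete the construction. The principal difficulty — and the precise place where both polynomial transitivity and polynomial complexity are needed — is the uniformity in the third paragraph: Kotani supplies large transfer matrices only for a.e.\ $E$ and a.e.\ $\omega$ with no rate, whereas the complexity bound forces control of $\mathrm{Leb}(S_{w,C,N_k})$ for each of the polynomially many words $w$ individually; polynomial transitivity bridges the gap by letting a single $\omega$ realize enough subwords that a purely measure-theoretic statement about words pins down a good one, the price being inflation of the window to $N_k\approx n_k^\gamma$, which costs nothing because $e^{\eta_j n_k/2}$ still beats every polynomial in $N_k$.
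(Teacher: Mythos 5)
Your proof is correct and follows essentially the same route as the paper: reduce via Schnol's theorem to bounding the sets of polynomially bounded generalized eigenfunctions, invoke Kotani for positive Lyapunov exponent almost everywhere, use polynomial transitivity to produce large monodromies on both half-lines from a single $\omega$, apply Lemma~\ref{l.9} for the exponentially small Lebesgue bound per cylinder, and use polynomial complexity to keep the cylinder count polynomial. Your bookkeeping (exhausting $\{L>0\}$ by the nested sets $Z_j^{(k)}$ and closing with Borel--Cantelli, instead of the paper's choice of a compact $\Lambda$ with a uniform rate $\alpha$ and threshold followed by a direct $\varepsilon$-estimate and a $\limsup$ over $\Omega^{(\ell)}$) is a cosmetic variant of the same argument.
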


\begin{proof}
By the subadditive ergodic theorem, we have that for every $E \in \R$ and for every $\kappa > 0$, there exists $n_0$ such that for $n \ge n_0$, we have
$$
\mu \left( \{ \omega \in \Omega : \|A^{E-f}_n(\omega)\| \ge e^{(L(E)-\kappa)n} \} \right) > 1 - \kappa.
$$

By aperiodicity and Proposition~\ref{p.kotani}, $L(E) > 0$ for Lebesgue almost every $E \in \Sigma_\mu$. Thus, for every $\beta > 0$ and every $\eta>0$, we can find $\alpha > 0$, a compact subset $\Lambda$ of $\Sigma_\mu$ and $n_\Lambda \in \Z_+$ such that
$$
\mathrm{Leb}(\Sigma_\mu \setminus \Lambda) < \beta
$$
and
$$
\mu \left( \{ \omega \in \Omega : \|A^{E-f}_n(\omega)\| \ge e^{\alpha n}\} \right) > 1 - \eta \quad \text{ for every } n \ge n_\Lambda , \; E \in \Lambda.
$$

For $\gamma > 0$ and $\omega \in \Omega$, consider the set
$$
\Sigma_{\omega,\gamma} = \{ E \in \R : \exists u \text{ s.t. } H_\omega u = E u, \; |u(0)|^2+|u(1)|^2 = 1, \; |u(n)| \le \gamma (1 + |n|) \}.
$$
Obviously, $\Sigma_{\omega,\gamma} \subseteq \sigma(H_\omega)$ for each $\omega\in\Omega$ and each $\gamma>0$. Also, $\bigcup_{\gamma \in \Z_+}\Sigma_{\omega,\gamma}$ supports all spectral measures of $H_\omega$ and hence $\bigcup_{\omega \in \Omega'} \bigcup_{\gamma \in \Z_+}\Sigma_{\omega,\gamma}$ supports the density of states measure $dk_\mu$ if $\mu(\Omega') = 1$. Our goal is to find a full measure set $\Omega'\subseteq\Omega$ such that this union has zero Lebesgue measure. By Proposition 1, throughout this proof, we may work in a full measure subset $\Omega' \subseteq\Omega$ such that for each $\omega \in \Omega'$ and each $\gamma > 0$,
$$
\Sigma_{\omega,\gamma} \subseteq \sigma(H_\omega) = \Sigma_\mu.
$$

For $N \in \Z_+$, we also consider the set
$$
\{ E \in \R : \exists u \text{ s.t. } H_\omega u = E u, \; |u(0)|^2+|u(1)|^2 = 1, \; |u(n)| \le \gamma (1 + |n|), \; |n| \le N \}.
$$
and denote it by $\Sigma_{\omega,\gamma,N} $. Then we have
$$
\Sigma_{\omega,\gamma,N} \supseteq \Sigma_{\omega,\gamma,N + 1} \quad \text{ and } \quad \Sigma_{\omega,\gamma} = \bigcap_{N \ge 1} \Sigma_{\omega,\gamma,N}.
$$

Now by almost sure polynomial transitivity, for any $\varepsilon > 0$, there exist $\delta > 0$, $C' > 0$, and a sequence $n_k \to \infty$ such that for each $k$, there is a $\Omega_k' \subset \Omega'$ with $\mu(\Omega_k') > 1 - \frac{\varepsilon}{3}$, such that for every $\omega \in \Omega_k'$, we have
$$
\mu \left( \bigcup_{m = 0}^{C' n_k^{C'}} \left[ (T^m \omega)_{[0,n_k-1]} \right] \right) > \delta.
$$
Let $N_k = C' n_k^{C'} + n_k$. Then by almost sure polynomial complexity, for the $\varepsilon$ and $\delta$ above, there exists a constant $C$ such that for each $k$, there exists a
$$
\Omega_k \subset \Omega_k' \cap T^{-N_k} (\Omega_k') \mbox{ with } \mu (\Omega_k) > 1 - \varepsilon,
$$
which consists of cylinders of length $2 N_k + 1$ and the number of $2 N_k + 1$ cylinders in $\Omega_k$ is bounded by
$$
C N_k^C = C (C' n_k^{C'} + n_k)^C,
$$
which is again polynomially large in $n_k$.

Now by the discussion in the beginning of the proof, we have that for the $\varepsilon$ and $\delta$ above, there exist $\Lambda \subset \Sigma_\mu$, $\alpha > 0$, and $N \in \Z_+$ such that
$$
\mathrm{Leb} ( \Sigma_\mu \setminus \Lambda ) < \frac{\varepsilon}{2}
$$
and for each $E \in \Sigma$, the set
$$
D_E = \{ \omega \in \Omega : \|A^E_n(\omega)\| \ge e^{2 n \alpha},\ \forall n > N \}
$$
satisfies
$$
\mu(D_E) > 1 - \delta.
$$
Thus for $n_k > N$, we have that for each $\omega \in \Omega_k$ and each $E \in \Lambda$, there exist $m_1,\ m_2 \in [0, C' n_k^{C'}]$ such that
$$
[T^{m_1}(\omega)]_{[0,n_k-1]} \bigcap D_E \neq \varnothing, \quad [T^{m_2-N_k}(\omega)]_{[-n_k,-1]} \bigcap D_E \neq \varnothing.
$$
Thus we obtain
\begin{center}
either $\|A_{m_1}^E{(\omega)}\| > e^{n_k\alpha}$ or $\|A_{m_1+n_k}^E{(\omega)}\| > e^{n_k\alpha}$.
\end{center}
Similar, we also have
\begin{center}
either $\|A_{-(N_k-n_k-m_2)}^E{(\omega)}\| > e^{n_k\alpha}$ or $\|A_{-(N_k-m_2)}^E{(\omega)}\| > e^{n_k\alpha}$.
\end{center}
Now by Lemma~\ref{l.9}, we have for each $\omega \in \Omega_k$,
$$
\mathrm{Leb} \left( \Sigma_{\omega,\gamma,N_k} \cap \Lambda \right) < p(n_k) e^{-\alpha n_k}
$$
for some polynomial $p\in\R[X]$.

Notice that the set $\Sigma_{\omega,\gamma,N_k}$ only depends on $\omega_{-N_k}, \ldots, \omega_{N_k}$. By our choice of $\Omega_k$, there are only $C N_k^C$ many $2 N_k + 1$ cylinders. Thus we obtain for sufficiently large $n_k$,
$$
\mathrm{Leb} \left( \bigcup_{\omega \in \Omega_k} (\Sigma_{\omega,\gamma,N_k} \cap \Lambda) \right) < p'(n_k) e^{-\alpha n_k} < \frac{\varepsilon}{2},
$$
where $p'\in\R[X]$ is again a polynomial. Hence, we have for large $k$,
$$
\mu (\Omega_k) > 1 - \varepsilon
$$
and
$$
\mathrm{Leb} \left( \bigcup_{\omega \in \Omega_k} \Sigma_{\omega,\gamma} \right) < \mathrm{Leb} \left( \bigcup_{\omega \in \Omega_k} (\Sigma_{\omega,\gamma,N_k} \cap \Lambda) \right) + \mathrm{Leb} (\Sigma_\mu \setminus \Lambda) < \varepsilon.
$$
Now if we apply the result above to $\varepsilon = 2^{-\ell},\ \ell\in\Z_+$, then for each $\ell$, there is $\Omega^{(\ell)} \subset \Omega'$ such that
$$
\mu (\Omega^{(\ell)}) > 1 - 2^{-l} \mbox{ and } \mathrm{Leb} \left( \bigcup_{\omega \in \Omega^{(\ell)}} \Sigma_{\omega,\gamma} \right) < 2^{-l}.
$$
Thus if we set
$$
\Omega_\gamma = \limsup_{\ell \to \infty} \Omega^{(\ell)} = \bigcap_{s \ge 1} \bigcup_{\ell \ge s} \Omega^{(\ell)},
$$
then it is easy to see that
$$
\mu (\Omega_\gamma) = 1 \mbox{ and }  \mathrm{Leb} \left( \bigcup_{\omega \in \Omega_\gamma} \Sigma_{\omega,\gamma} \right) = 0.
$$
Now if we set
$$
\Omega'' = \bigcap_{\gamma \in \Z_+} \Omega_\gamma,
$$
then we have
$$
\mu (\Omega'') = 1 \mbox{ and } \mathrm{Leb} \left( \bigcup_{\gamma \in \Z_+} \bigcup_{\omega \in \Omega''} \Sigma_{\omega,\gamma} \right) = 0,
$$
which completes the proof.
\end{proof}

\subsection{Subshifts Generated by Shifts on Tori and Rectangular Grids}\label{subsec.3.5}

Consider a minimal translation $S_\alpha : \T^d \to \T^d$, $S_\alpha x = x + \alpha$. Consider a partition of $\T^d$ by finitely many rectangles with sides parallel to the standard hyperplanes,
$$
\T^d = \bigsqcup_{j = 1}^J R_j.
$$
More explicitly, we assume that each $R_j$ is of the form
$$
R_j = \{ x \in \T^d : \alpha_i^{(j)} \le x_i < \beta_i^{(j)} \}.
$$
Choose real numbers $\lambda_1, \ldots , \lambda_J$, not all equal, and write $A = \{ \lambda_1 , \ldots, \lambda_J \}$. Consider the sequence $s=s(0) \in A^\Z$ defined by
$$
s_n = \sum_{j = 1}^J \lambda_j \chi_{R_j}(S_\alpha^n 0),
$$
that is, $s_n = \lambda_j$ if and only if $S_\alpha^n 0 \in R_j$. Let $\Omega \subseteq A^\Z$ be the subshift generated by $s$, that is, take shifts and accumulation points. Since $S_\alpha$ is minimal, it follows that $\Omega$ is minimal as well with respect to the shift $T$. Moreover, $\Omega$ is also uniquely ergodic and the unique invariant measure is the push-forward of Lebesgue measure on $\T^d$ under
$$
x \mapsto  (s_n(x))_{n\in\Z},\ s_n(x)=\sum_{j = 1}^J \lambda_j \chi_{R_j}(S_\alpha^n x).
$$

\begin{lemma}\label{l.10}
The complexity function of $\Omega$ satisfies $p(n) \le Cn^d$. In particular, $\Omega$ has polynomial factor complexity, and hence it is almost surely of polynomial complexity with respect to the unique ergodic measure.
\end{lemma}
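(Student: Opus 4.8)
The plan is to bound the factor complexity of $\Omega$ by counting the cells of a dynamical refinement of the rectangular partition, and then to estimate that count by a soft geometric argument about arrangements of axis-parallel hyperplanes on $\T^d$.

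First I would reduce the combinatorics to geometry. Since $\Omega$ is the orbit closure of $s = s(0)$, a standard compactness argument shows that every word of length $n$ occurring in some element of $\Omega$ already occurs in $s$ itself. Writing $\mathcal{P} = \{R_1,\dots,R_J\}$ and setting
$$
\mathcal{P}_n = \bigvee_{k=0}^{n-1} S_\alpha^{-k}\mathcal{P},
$$
the length-$n$ word read off $s$ starting at position $N$ is completely determined by which atom of $\mathcal{P}_n$ contains $S_\alpha^N 0$ (two starting positions lying in the same atom give the same word, and, if the $\lambda_j$ are not all distinct, even different atoms may give the same word). Hence $p(n)$ is at most the number of nonempty atoms of $\mathcal{P}_n$.

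Next I would analyze $\mathcal{P}_n$ geometrically. The boundary of each $R_j$ lies in the union of the $2d$ coordinate hyperplanes $\{x_i = \alpha_i^{(j)}\}$ and $\{x_i = \beta_i^{(j)}\}$, so $\partial\mathcal{P}$ is contained in a union of coordinate hyperplanes which, for each direction $i$, involves at most a fixed number $m$ of hyperplanes perpendicular to the $x_i$-axis (take $m$ to be the maximum over $i$ of the number of distinct face-coordinates in direction $i$). Since $S_\alpha^{-k}$ sends $\{x_i = c\}$ to $\{x_i = c - k\alpha_i\}$, the set $\partial\mathcal{P}_n = \bigcup_{k=0}^{n-1} S_\alpha^{-k}(\partial\mathcal{P})$ lies in a union of coordinate hyperplanes containing at most $mn$ of them perpendicular to each coordinate axis. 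Finally, a family of $a_i$ hyperplanes perpendicular to the $x_i$-axis, $i = 1,\dots,d$, cuts $\T^d$ into at most $\prod_{i=1}^d \max(a_i,1)$ boxes, because each coordinate circle is cut into at most $a_i$ arcs and the atoms are products of such arcs; with $a_i \le mn$ this yields at most $(mn)^d$ atoms. Combining with the previous step gives $p(n) \le C n^d$ with $C = m^d$. In particular $p(2n+1) = O(n^d)$, so $\Omega$ has polynomial factor complexity, and the remark following Definition~\ref{d.2} then gives that $\Omega$ is almost surely of polynomial complexity with respect to the unique ergodic measure.

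There is no serious obstacle here; the only point requiring a little care is the reduction in the first step, namely that all words of $\Omega$ are realized along the single orbit of $0$ and are therefore indexed by atoms of $\mathcal{P}_n$, but this is immediate from the orbit-closure description of $\Omega$, and the subsequent hyperplane counting is elementary.
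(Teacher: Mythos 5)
Your proof is correct and follows essentially the same approach as the paper's: both exploit that $S_\alpha$ is a product of circle rotations and that the refined partition $\bigvee_{k=0}^{n-1} S_\alpha^{-k}\mathcal{P}$ decomposes into at most $O(n)$ arcs per coordinate circle, giving $O(n^d)$ atoms. Your version spells out the reduction to the atoms of $\mathcal{P}_n$ and the hyperplane count more explicitly than the paper's one-line invocation of linear complexity for each circle factor, but the underlying argument is the same.
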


\begin{proof}
Since $S_\alpha$ is minimal, $1,\alpha_1,\ldots,\alpha_d$ are independent over the rationals. In particular, each $\alpha_j$ is irrational. Since $S_\alpha$ is the direct product of $d$ irrational rotations of the circle, the rectangles project to half-open intervals on the circles, and the complexity of any coding of an irrational rotation with respect to a partition by finitely many intervals is linearly bounded, the product must be bounded by the product of the bounds.
\end{proof}

We say that $\alpha$ is Diophantine if there are $C,\tau > 0$ such that
$$
\|\langle k,\alpha\rangle \|_{\R/\Z} \ge C \|k\|^{-\tau} \quad \text{ for all } k \in \Z^d \setminus \{ 0 \},
$$
where $\langle \cdot,\cdot \rangle $ denotes the usual scalar product and $\|\cdot\|_{\R/\Z}$ denotes the distance to the nearest integer. Let $\Delta^C_{\tau}$ denote the set of such $\alpha$. Then it is a standard result that for any $\tau > d-1$,
$$
\bigcup_{C > 0} \Delta^C_{\tau} \subset \R^d
$$
is of full Lebesgue measure.

\begin{lemma}\label{l.shiftdistr}
Suppose $\alpha$ is Diophantine. Then, $\Omega$ is almost surely polynomially transitive with respect to the unique ergodic measure.
\end{lemma}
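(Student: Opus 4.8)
The plan is to transport the statement to $\T^d$ and then feed it a quantitative equidistribution estimate for the Diophantine rotation $S_\alpha$. Write $\mathcal R$ for the partition of $\T^d$ into the rectangles $R_j$ and $\pi : \T^d \to \Omega$, $x \mapsto (s_n(x))_{n\in\Z}$, for the coding map; since the $R_j$ are half-open, $\pi$ is everywhere defined and $\mu = \pi_*\mathrm{Leb}$. Let $X_0$ be the set of $x$ whose entire $S_\alpha$-orbit avoids the faces of all the $R_j$; it has full Lebesgue measure, so $\pi(X_0)$ has full $\mu$-measure. The first step is the dictionary: for $\omega = \pi(x)$ with $x\in X_0$, the cylinder $[(T^m\omega)_{[0,n-1]}]$ pulls back under $\pi$ to a union of atoms of $\mathcal R_n := \bigvee_{j=0}^{n-1}S_\alpha^{-j}\mathcal R$ which contains $P_m^{(n)}$, the atom of $\mathcal R_n$ through $x+m\alpha$ (equality if the labels $\lambda_j$ are distinct, which one may assume). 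Hence
$$
\mu\Big(\bigcup_{m=0}^{M}[(T^m\omega)_{[0,n-1]}]\Big)\ \ge\ \mathrm{Leb}\Big(\bigcup_{m=0}^{M}P_m^{(n)}\Big),
$$
i.e.\ the Lebesgue measure of the union of the atoms of $\mathcal R_n$ visited by the orbit piece $\{x+m\alpha : 0\le m\le M\}$. So it suffices to find, for each $n$, a polynomially large $M=M(n)$ making this union have Lebesgue measure $\ge\tfrac12$, uniformly in $x\in X_0$.

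Next I would count atoms and isolate the large ones. Because the faces of the $R_j$ are axis-parallel, each atom of $\mathcal R_n$ is a box, and in the $i$-th coordinate the partition of $\T^1$ cut out by $\mathcal R_n$ uses at most $2Jn$ points (the $\le 2J$ faces in that direction together with their translates by $-j\alpha_i$, $0\le j\le n-1$), so $\#\mathcal R_n\le(2Jn)^d=:C_0 n^d$ (this is the counting underlying Lemma~\ref{l.10}). Set $\rho:=(2C_0 n^d)^{-1}$ and call an atom \emph{large} if its Lebesgue measure is $\ge\rho$; then the large atoms cover a set of Lebesgue measure $\ge 1-C_0 n^d\rho=\tfrac12$.

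The core of the argument is the quantitative equidistribution step. Since $\alpha\in\Delta^C_\tau$, the Erd\H{o}s--Tur\'{a}n--Koksma inequality together with the Diophantine bound $\|\langle k,\alpha\rangle\|_{\R/\Z}\ge C\|k\|^{-\tau}$, applied to the exponential sums $\frac1M\sum_{m<M}e^{2\pi i\langle k,x+m\alpha\rangle}$ (whose moduli are independent of $x$), yields a discrepancy estimate $D_M(\{x+m\alpha\}_{m=0}^{M-1})\le C_1 M^{-\kappa}$ for all $x$, with some $\kappa=\kappa(d,\tau)>0$. Consequently, as soon as $C_1 M^{-\kappa}<\rho$, every axis-parallel box of Lebesgue measure $\ge\rho$ — in particular every large atom of $\mathcal R_n$ — is met by $\{x+m\alpha : 0\le m<M\}$, for every $x$. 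Taking $M(n)$ to be the least integer with $C_1 M(n)^{-\kappa}<\rho=(2C_0 n^d)^{-1}$, so that $M(n)\asymp n^{d/\kappa}$ is polynomial in $n$, we get that $\bigcup_{m=0}^{M(n)}P_m^{(n)}$ contains every large atom, hence has Lebesgue measure $\ge\tfrac12$. To match Definition~\ref{d.1}: given $\varepsilon>0$, take $\delta=\tfrac13$, take $C$ with $Cn^C\ge\max(M(n),n)$ for all $n$, take $n_k=k$ and $\Omega_k=\pi(X_0)$ of full $\mu$-measure; the required lower bound then holds for every $\omega\in\Omega_k$.

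The main obstacle, then, is the equidistribution step: one needs a form of Erd\H{o}s--Tur\'{a}n--Koksma that is uniform in the base point $x$ and delivers a genuine power saving $M^{-\kappa}$ with $\kappa$ depending only on $d$ and $\tau$ — this is precisely where the Diophantine hypothesis is used, and it is what produces the exponent $C$ in Definition~\ref{d.1}. Everything else — the coding-to-torus dictionary, the box geometry of axis-parallel refinements, and the bookkeeping to reach the form of the definition — is routine.
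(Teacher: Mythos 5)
Your reduction to a quantitative equidistribution statement on $\T^d$ is the same as the paper's, and it is correct: count the atoms of $\mathcal R_n$ (at most $Cn^d$, as in Lemma~\ref{l.10}), discard the small ones, and show that a Diophantine rotation orbit becomes $\rho$-dense, in the appropriate sense, in time polynomial in $\rho^{-1}$, uniformly in the starting point. Where you diverge is in how you prove that last step. The paper passes from a box of measure $\gtrsim n^{-d}$ to an inscribed ball of measure $\gtrsim n^{-d^2}$, approximates the ball's indicator by a $C^k$ bump $f$ with $\|f\|_{C^k}\lesssim\gamma^{-C'}$, solves the cohomological equation $h(x+\alpha)-h(x)=f-\int f$ using the Diophantine small-divisor bound, and deduces $\|S_N f - N\int f\|_\infty\lesssim\|f\|_{C^k}$, which forces a visit once $N\gtrsim\gamma^{-(C'+d)}$. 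You instead keep the box geometry intact (which is what the atoms actually are) and invoke the Erd\H os--Tur\'an--Koksma inequality, combined with $\|\langle k,\alpha\rangle\|\ge C\|k\|^{-\tau}$ and the $x$-independence of $\bigl|\tfrac1M\sum_{m<M}e^{2\pi i m\langle k,\alpha\rangle}\bigr|$, to get a uniform discrepancy bound $D_M\lesssim M^{-\kappa}$ with $\kappa\approx 1/(\tau+1)$; once $D_M<\rho$, every box of measure $\ge\rho$ is hit. Your version is a touch more economical: it skips the in-radius/ball reduction (hence gives $M\asymp n^{d/\kappa}$ rather than an exponent involving $d^2$), and it replaces the cohomological machinery with a classical discrepancy estimate. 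The paper's coboundary method, on the other hand, is exactly the tool they reuse (with Flaminio--Forni/Avila--Forni--Ulcigrai input) for the skew-shift in Lemma~\ref{l.13}, where an ETK-type argument is not available, so the authors had a reason to set it up uniformly. Both proofs are correct; yours is a legitimately different and slightly sharper route for the rotation case.
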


\begin{proof}
Fix some $\beta \in (0,1)$. Given any $n \in \Z_+$, look at the subsets of $\T^d$ corresponding to $n$-cylinder subsets of $\Omega$. They are of the form
$$
[\lambda_{i_0}\cdots\lambda_{i_{n-1}}]_{0,\ldots,n-1}=R_{i_0}\cap S^{-1}_\alpha(R_{i_1})\cap\cdots\cap S^{-(n-1)}_\alpha(R_{i_{n-1}}).
$$
Thus they are intersections of pre-images of the rectangles under $S_\alpha$ and hence are rectangles themselves. The Lebesgue measure of such a rectangle is equal to the frequency of the corresponding word of length $n$ defining the cylinder set. Since there are at most $Cn^d$ many such words, the measure of the cylinder sets that each have measure less than $\beta / (Cn^d)$ is at most $\beta$. Thus, look at all cylinder sets that each have measure at least $\beta / (Cn^d)$. Their union has measure at least $1 - \beta$.

Consider such a word whose frequency is at least $\beta /(Cn^d)$. The corresponding rectangle has the same Lebesgue measure and (since the sides are bounded above by $1$) its in-radius is at least $\frac12 \beta/(C n^d)$. Thus each of these rectangles contains a ball with measure at least $c n^{-d^2}$, which is polynomially small in $n$.

Now we claim that any Diophantine translation orbit $S^k_\alpha(x)$ becomes $\gamma$-dense in time polynomial in $\gamma^{-1}$. In other words, for each $x\in\T^d$, for each ball $\CB$ in $\T^d$ with measure at least $\gamma$, there exists a $k$ which is polynomially large in $\gamma^{-1}$ such that $S^k_\alpha(x)\in \CB$. Clearly, by compactness, this polynomial can be chosen to be independent of $x\in\T^d$. Note also once the orbit, $S^k_\alpha(x)$, visits the corresponding rectangle, then the $n$-cylinder set
$$
[s_k(x)\cdots s_{k+n-1}(x)]_{0,\ldots,n-1}\subset\Omega
$$
represents the whole rectangle. Assume this claim and apply it with $\gamma = cn^{-d^2}$, then any orbit visits each rectangle above in time polynomially large in $n$. And this polynomial is independent of the initial point. Now if we go back to the subshift setting, then the result follows with $\delta=1-\beta$ and the $(\varepsilon,n)$-independent $\Omega_n = \Omega$.

For the proof of the claim, we need to solve the following cohomological equation:
$$
h(x+\alpha)-h(x)=f(x)-\int_{\T^d}fdx
$$
for some $f\in C^k(\T^d)$. By Fourier expansion, it is easy to see that for any $\alpha \in \Delta_\tau$, if $k$ is sufficiently large, there is a smooth solution of the above equation, say $h \in C^l(\T^d)$ for some $l$ less than $k$. Furthermore, we have the estimate
$$
\|h\|_{C^l}\lesssim \|f\|_{C^k}.
$$
From this, we can readily get the following estimate
$$
\|S_N(f) - N \int f\|_\infty \lesssim \|h\|_{C^l} \lesssim \|f\|_{C^k},
$$
where
$$
S_N(f) = \sum^{N-1}_{k=0} f(S^k_\alpha(x)) = \sum^{N-1}_{k=0} f(x + k\alpha).
$$
Now for any ball $\CB$ with radius $\gamma$, we can approximate the characteristic function $\chi_\CB$ by a non-negative function $f\in C^k(\T^d)$ with $\|f\|_{C^k} \lesssim \gamma^{-C'}$, which is also supported in $\CB$. Note also $\int_{\T^d} fdx\approx \gamma^d$. Thus, we have
$$
S_N(f)\gtrsim N\gamma^d-\gamma^{-C'}>0
$$
for some $N>\gamma^{-(C'+d)}$, which is polynomially large in $\gamma^{-1}$. This completes the proof of Lemma~\ref{l.shiftdistr}.
\end{proof}

\begin{corollary}\label{c.qpsingdos}
If $\alpha$ is Diophantine and the subshift $\Omega$ is generated as above, the density of states measure associated with the unique ergodic measure on $\Omega$ is singular.
\end{corollary}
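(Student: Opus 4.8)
The plan is to deduce the corollary directly from Theorem~\ref{t.4} by verifying its three hypotheses for the ergodic subshift $(\Omega,T,\mu)$, where $\mu$ is the unique ergodic measure described above (unique ergodicity plus minimality makes $\mu$ ergodic, so the framework of Theorem~\ref{t.4} applies). Two of the three hypotheses are already in hand: Lemma~\ref{l.10} shows that $\Omega$ has polynomial factor complexity, hence is almost surely of polynomial complexity with respect to $\mu$; and Lemma~\ref{l.shiftdistr}, which uses precisely the Diophantine hypothesis on $\alpha$, shows that $(\Omega,T,\mu)$ is almost surely polynomially transitive. The only point not yet recorded is the aperiodicity of $\Omega$.

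To establish aperiodicity I would argue by contradiction. Since $S_\alpha$ is minimal, $1,\alpha_1,\ldots,\alpha_d$ are rationally independent; in particular, for every integer $q\ge 1$ the numbers $1,q\alpha_1,\ldots,q\alpha_d$ remain rationally independent, so $S_\alpha^q=S_{q\alpha}$ is ergodic with respect to Lebesgue measure on $\T^d$. Suppose $\Omega$ were periodic. Being minimal, $\Omega$ would then be a single finite orbit of some period $q$, so that $T^q$ acts as the identity on $\Omega$. The coding map $\pi:\T^d\to\Omega$, $\pi(x)=(s_n(x))_{n\in\Z}$, is Borel measurable, satisfies $\pi\circ S_\alpha=T\circ\pi$, and pushes Lebesgue measure forward to $\mu$. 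Hence for Lebesgue-a.e.\ $x$ we would have $\pi(S_\alpha^q x)=T^q\pi(x)=\pi(x)$, and in particular $s_0(S_\alpha^q x)=s_0(x)$. Choosing an index $j_0$ whose value $\lambda_{j_0}$ is not shared by all rectangles, the level set $U=\{x:s_0(x)=\lambda_{j_0}\}$ is a union of some of the rectangles $R_j$ with $0<\mathrm{Leb}(U)<1$, and the identity above shows $S_\alpha^{-q}U=U$ up to a Lebesgue-null set, contradicting ergodicity of $S_\alpha^q$. Therefore $\Omega$ is aperiodic.

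With all three hypotheses of Theorem~\ref{t.4} verified for $(\Omega,T,\mu)$, that theorem yields that the density of states measure $dk_\mu$ is purely singular, which is the assertion of the corollary.

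I do not expect a genuine obstacle here: the corollary is essentially an assembly of the preceding lemmas, and the only ingredient not already stated is the short aperiodicity argument above. Its one delicate point is the observation that $S_\alpha^q$ remains ergodic for every $q$, so that a nontrivial union of the coding rectangles cannot be invariant modulo null sets — this is what rules out periodicity of the coded sequence and hence of the subshift.
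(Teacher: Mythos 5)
Your proposal is correct and takes essentially the same route as the paper, whose proof is just the one-line observation that Lemmas~\ref{l.10} and \ref{l.shiftdistr} together with aperiodicity of $\Omega$ verify the hypotheses of Theorem~\ref{t.4}. The paper merely asserts aperiodicity ``by construction''; your additional argument via ergodicity of $S_\alpha^q$ (using rational independence of $1,q\alpha_1,\dots,q\alpha_d$) and the $S_\alpha^q$-invariant level set $U$ of positive and not full measure correctly supplies the missing detail.
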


\begin{proof}
Since by construction the subshift is aperiodic, the corollary follows from Theorem~\ref{t.4} and Lemmas~\ref{l.10} and \ref{l.shiftdistr}.
\end{proof}

\subsection{Subshifts Generated by the Skew-Shift and Rectangular Grids}

Consider the standard skew-shift $\tilde S_\alpha : \T^2 \to \T^2$, $\tilde S_\alpha(x,y) = (x + \alpha,x + y)$ with $\alpha$ irrational. This map is strictly ergodic with Lebesgue measure as the unique invariant measure. As before, consider a partition of $\T^2$ by finitely many rectangles of the form
$$
R_j = \{ x \in \T^2 : \alpha_i^{(j)} \le x_i < \beta_i^{(j)} \}.
$$
Choose real numbers $\lambda_1, \ldots , \lambda_J$, not all equal, and write $A = \{ \lambda_1 , \ldots, \lambda_J \}$. Consider the sequence $s \in A^\Z$ defined by
$$
\tilde s_n = \sum_{j = 1}^J \lambda_j \chi_{R_j}(\tilde S_\alpha^n 0),
$$
that is, $\tilde s_n = \lambda_j$ if and only if $\tilde S_\alpha^n 0 \in R_j$. Let $\Omega \subseteq A^\Z$ be the subshift generated by $\tilde s$, that is, take shifts and accumulation points. Since $\tilde S_\alpha$ is strictly ergodic, it follows that $\Omega$ is strictly ergodic as well with respect to the shift $T$. Again, the unique invariant measure is the push-forward of Lebesgue measure on $\T^2$ under
$$
(x,y) \mapsto  \left(\sum_{j = 1}^J \lambda_j \chi_{R_j}(\tilde S_\alpha^n (x,y))\right)_{n\in\Z}.
$$

\begin{lemma}\label{l.12}
The complexity function of $\Omega$ satisfies $p(n) \le Cn^3$. In particular, $\Omega$ has polynomial factor complexity, and hence it is almost surely of polynomial complexity with respect to the unique ergodic measure.
\end{lemma}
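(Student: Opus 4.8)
The plan is to proceed as in the proof of Lemma~\ref{l.10}: the number $p(n)$ of words of length $n$ occurring in $\Omega$ is controlled, up to a harmless correction coming from the (Lebesgue-null) boundary, by the number of atoms of the refined partition $\bigvee_{k=0}^{n-1}\tilde S_\alpha^{-k}\mathcal{R}$, where $\mathcal{R}=\{R_1,\dots,R_J\}$ is the given partition of $\T^2$ into rectangles. In turn, each atom $\bigcap_{k=0}^{n-1}\tilde S_\alpha^{-k}R_{i_k}$ is a union of connected components of $\T^2\setminus\bigcup_{k=0}^{n-1}\tilde S_\alpha^{-k}(\partial\mathcal{R})$, so it suffices to bound the number of faces of this arrangement of curves by $O(n^3)$.

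The first step is to write the iterates explicitly: $\tilde S_\alpha^k(x,y)=\bigl(x+k\alpha,\;y+kx+\binom{k}{2}\alpha\bigr)$. The boundary $\partial\mathcal{R}$ lies in a finite union of vertical lines $\{x=c\}$ (the finitely many $x$-thresholds of the rectangles) and horizontal lines $\{y=c\}$ (the $y$-thresholds). Pulling back by $\tilde S_\alpha^{k}$, the vertical line $\{x=c\}$ becomes the vertical line $\{x=c-k\alpha\}$, while the horizontal line $\{y=c\}$ becomes $\bigl\{y=-kx+\bigl(c-\binom{k}{2}\alpha\bigr)\bigr\}$, a line of slope $-k$; on $\T^2$ this is a closed geodesic of homology class $(1,-k)$. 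Hence $\bigcup_{k=0}^{n-1}\tilde S_\alpha^{-k}(\partial\mathcal{R})$ sits inside an arrangement of $O(n)$ closed curves on $\T^2$: at most $O(n)$ mutually parallel vertical lines, and, for each $k=0,\dots,n-1$, boundedly many lines of slope $-k$.

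Next I would count crossings. Two vertical lines are disjoint; a vertical line meets a line of slope $-k$ in exactly one point; and two lines of slopes $-j$ and $-k$ with $j\ne k$ meet in exactly $|j-k|$ points of $\T^2$ (the homology classes $(1,-j)$, $(1,-k)$ have intersection number $|j-k|$, attained by straight lines). Therefore the total number $I$ of crossings satisfies
\[
I\;\le\;O(n)\cdot O(n)\;+\;O\!\Bigl(\sum_{0\le j<k\le n-1}(k-j)\Bigr)\;=\;O(n^2)+O(n^3)\;=\;O(n^3).
\]
Since the number of faces of an arrangement of $m$ closed curves on $\T^2$ with $I$ crossings is $O(I+m)$ (Euler's formula; multiple crossings only decrease it), and here $m=O(n)$, the number of complementary faces is $O(n^3)$, so $p(n)\le Cn^3$, which is the first assertion. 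The remaining claims are immediate: a complexity bound $p(n)\le Cn^3$ is by definition polynomial factor complexity, and, as noted after Definition~\ref{d.2}, this forces almost sure polynomial complexity with respect to the unique ergodic measure — take $W_n$ to consist of all words of length $2n+1$ occurring in $\Omega$, so that $|W_n|=p(2n+1)=O(n^3)$ while the union of the corresponding cylinders is all of $\Omega$.

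The one step that is not bookkeeping is the crossing count. The preimages of the horizontal threshold lines are lines of \emph{integer} slope $-k$, and on the torus two such lines, of slopes $-j$ and $-k$, intersect $|j-k|$ times rather than at most once as they would in the plane; summing $|j-k|$ over all pairs $0\le j<k<n$ is precisely what upgrades the naive planar bound $O(n^2)$ to the cubic bound $O(n^3)$ (and shows the exponent $3$ is sharp for generic choices of thresholds). Everything else — the reduction from words to faces, the Euler-characteristic count, and the deduction of almost sure polynomial complexity — is routine.
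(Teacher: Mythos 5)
Your argument is correct and arrives at the same cubic bound, but by a genuinely different route from the paper. The paper first exploits minimality to code the orbit of the special base point $(x,y)=(\alpha/2,0)$, which kills the linear-in-$n$ term in the second coordinate of $\tilde S_\alpha^n$, and then bounds the factor complexity of $\Omega$ by the \emph{product} of the complexities of the two coordinate codings: linear for the first coordinate (an irrational rotation coded by intervals) and, the paper claims, at most quadratic for the second (the coding of $(n^2\alpha/2)$ viewed as a subsequence of a rotation coding). You bypass the coordinate decomposition entirely: you work with the two-dimensional arrangement $\bigcup_{k=0}^{n-1}\tilde S_\alpha^{-k}(\partial\mathcal R)$ on $\T^2$, compute the homological intersection numbers $|j-k|$ of the toral geodesics of slopes $-j$ and $-k$ (this is the essential non-planar input), and count faces via Euler's formula. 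Your route is arguably cleaner and more robust: the product-of-projections estimate is lossy, and the quadratic bound for the second coordinate alone is a delicate point (your own crossing count applied to just the sloped lines already produces $\Theta(n^3)$ cells for that single factor), whereas the direct two-dimensional face count transparently delivers $O(n^3)$ without needing to control either projection in isolation. The concluding reductions (words to nonempty atoms to faces, and almost-sure polynomial complexity by taking $W_n$ to be all words of length $2n+1$) agree with the paper's standard bookkeeping.
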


\begin{proof}
Note that
$$
\tilde S_\alpha^n (x,y) = \left(x + n \alpha, nx + \frac{n(n-1)}{2} \alpha + y\right) = \left(x + n \alpha, n^2 \frac{\alpha}{2} + n \left(  x - \frac{\alpha}{2} \right) + y \right).
$$
By minimality, we can compute the factor complexity for any element of $\Omega$ since all elements have the same set of finite factors. Let us choose the element that equals the coding of the point $(x,y) = (\frac{\alpha}{2},0)$. Again, we estimate the factor complexity by the product of the complexities of the projections to the coordinates with respect to the projections of the rectangles. In the first component ($\frac{\alpha}{2} + n \alpha$) we get a linear complexity as we are coding an irrational rotation with respect to a partition of the circle into finitely many half-open intervals. In the second component ($n^2 \frac{\alpha}{2}$), we have at most quadratic complexity. This follows since we take a subsequence of the coding of ($m \frac{\alpha}{2}$) which generates at most quadratically many words and then drop symbols, which won't increase the number of different words of a given length.
\end{proof}

\begin{lemma}\label{l.13}
For every Diophantine $\alpha\in\R$, $\Omega$ is almost surely polynomially transitive with respect to the unique ergodic measure.
\end{lemma}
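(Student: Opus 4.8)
Here is the plan I would follow to prove Lemma~\ref{l.13}.

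\medskip

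The strategy is to mimic the architecture of the proof of Lemma~\ref{l.shiftdistr}, replacing the quantitative equidistribution of a rotation orbit by quantitative equidistribution of the quadratic skew-shift orbit. Fix $\beta\in(0,1)$ and $n\in\Z_+$. Each $n$-cylinder of $\Omega$ corresponds to a set $C_w=\bigcap_{i=0}^{n-1}\tilde S_\alpha^{-i}(R_{j_i})\subset\T^2$, and by Lemma~\ref{l.12} there are at most $Cn^3$ of them, their Lebesgue measures being the frequencies of the corresponding words. Discarding those of measure $<\beta/(Cn^3)$, the union of the remaining \emph{big} cylinders has measure at least $1-\beta$. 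So it suffices to produce a polynomial $P$ such that for every $(x_0,y_0)\in\T^2$ the orbit segment $\tilde S_\alpha^0(x_0,y_0),\dots,\tilde S_\alpha^{P(n)}(x_0,y_0)$ meets every big $C_w$; then, exactly as at the end of Lemma~\ref{l.shiftdistr}, almost sure polynomial transitivity holds with $\delta=1-\beta$ and the $(\varepsilon,n)$-independent choice $\Omega_n=\Omega$. Using $\tilde S_\alpha^i(x,y)=(x+i\alpha,\ ix+\frac{i(i-1)}{2}\alpha+y)$ from Lemma~\ref{l.12}, one sees that $C_w$ is a ``sheared rectangle'': its projection to the first coordinate is an arc $I_w$ (in fact a cylinder of the rotation coding), and over each $x\in I_w$ its vertical fiber $J_w(x)$ is an intersection of arcs whose endpoints move with $x$ at slope at most $n-1$.

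The first step is a purely geometric lemma: every big cylinder $C_w$ contains an axis-parallel box of measure at least $c\,n^{-Q}$ for fixed constants $c,Q>0$. Indeed, from $\mathrm{Leb}(C_w)=\int_{I_w}|J_w(x)|\,dx\ge\beta/(Cn^3)$ and $|J_w(x)|\le 1$ one gets a point $x^*\in I_w$ with $|J_w(x^*)|\ge\beta/(Cn^3)$, and the Diophantine condition bounds the length of the component of $I_w$ containing $x^*$ below by $c_\alpha n^{-\tau}$. Passing to a sub-arc $I'\ni x^*$ of length $\epsilon'$, each of the $n$ fiber constraints moves by at most $(n-1)\epsilon'$ as $x$ ranges over $I'$, so the common fiber over $I'$ still contains an arc of length $\ge\beta/(Cn^3)-4n\epsilon'$; choosing $\epsilon'$ polynomially small in $n$ (and $\le c_\alpha n^{-\tau}$) produces the desired box $I'\times J'\subseteq C_w$.

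The second, and main, step is quantitative equidistribution: for $\alpha$ Diophantine there exist $C_1,\kappa>0$ such that the orbit $\{\tilde S_\alpha^k(x_0,y_0)\}_{k=0}^{K-1}$ has box-discrepancy at most $C_1K^{-\kappa}$, \emph{uniformly in the base point} $(x_0,y_0)$. By the Erd\H{o}s--Tur\'an--Koksma inequality this reduces to bounding the exponential sums $\sum_{k=0}^{K-1} e^{2\pi i(m_1(x_0+k\alpha)+m_2(kx_0+\frac{k(k-1)}{2}\alpha+y_0))}$ for $0<\max(|m_1|,|m_2|)\le K$; the exponent is a quadratic polynomial in $k$ with leading coefficient $\frac{m_2}{2}\alpha$, and Weyl differencing together with the Diophantine lower bound on $\|q\,m_2\alpha\|_{\R/\Z}$ yields a bound of the form $O(|m_2|^{O(1)}K^{1-\kappa})$. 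The point of uniformity is that the base point $(x_0,y_0)$ enters only the linear and constant coefficients of this polynomial, which are annihilated by the differencing. Taking $K=P(n)$ with $P$ a sufficiently large polynomial makes $C_1K^{-\kappa}<\tfrac12 c\,n^{-Q}$, so each orbit segment meets the box inside each big cylinder, hence every big cylinder, which finishes the argument.

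The hard part will be exactly this uniform polynomial equidistribution of the quadratic orbit. Unlike in Lemma~\ref{l.shiftdistr}, one cannot shortcut it via a cohomological equation: the skew-shift coboundary equation $h\circ\tilde S_\alpha-h=f-\int f$ is not solvable for general smooth mean-zero $f$ (for instance $f(x,y)=e^{2\pi iy}$), so genuine Weyl-type estimates are unavoidable, and one must carefully balance the polynomial-in-$n$ growth of the frequencies $m_i$ appearing in the Erd\H{o}s--Tur\'an--Koksma sum against the $n^{-Q}$ scale of the boxes manufactured in the first step.
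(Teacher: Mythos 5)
Your overall architecture matches the paper's: first manufacture a polynomially small box inside each ``big'' $n$-cylinder, then show that every orbit of the skew-shift becomes $\gamma$-dense in time polynomial in $\gamma^{-1}$. But your execution of each step is genuinely different from the paper's. For the first step, the paper simply observes that each $n$-cylinder is the intersection of parallelograms $\tilde S_\alpha^{-i}(R_{j_i})$ and hence a \emph{convex} polygon inside a fixed rectangle, and a convex subset of the unit square of area $A$ automatically contains a disc of radius $\gtrsim A$; no Diophantine input is needed. Your ``sheared rectangle'' argument arrives at the same conclusion, but your appeal to the Diophantine condition to bound $|I_w|$ from below is both incorrect as stated (the endpoints of $I_w$ may come from two \emph{different} partition points $\alpha_i^{(j)}$, in which case no Diophantine bound on their distance holds) and unnecessary: since $|J_w(x)|\le 1$, the assumption $\mathrm{Leb}(C_w)\ge\beta/(Cn^3)$ already forces $|I_w|\ge\beta/(Cn^3)$, and your fiber-shrinking estimate then gives a polynomial box on its own. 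For the second step, the paper decomposes a smooth bump $f=f_1+f_2$ with $f_2=\int_\T f\,dy$, handles $f_2$ by the rotation coboundary equation, and controls $f_1$ via Avila--Forni--Ulcigrai \cite[Theorem~11]{AFU}, which gives $\|S_{N_l}f_1\|_\infty\lesssim N_l^{1/2}\|f\|_s$ only along an $\alpha$-dependent sequence $N_l\to\infty$; the conclusion is therefore stated only along a subsequence $n_l$, which is all Definition~\ref{d.1} requires. You instead propose a self-contained Erd\H{o}s--Tur\'an--Koksma plus Weyl-differencing bound on the box-discrepancy, valid for \emph{all} $K$ and uniform in the base point because the base point only enters the subleading coefficients of the quadratic phase, which Weyl differencing annihilates. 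This buys you a stronger statement (every $n$, not a subsequence) and avoids citing \cite{AFU}, at the price of some analytic-number-theory bookkeeping (balancing the polynomial growth of the frequencies $m_i$ against the power saving from the Diophantine condition on the leading coefficient $\tfrac{m_2}{2}\alpha$). Both routes are sound; yours is a legitimate alternative and, apart from the stray Diophantine claim in step one, the plan has no essential gap.
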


\begin{proof}
This is a modification of the proof of Lemma~\ref{l.shiftdistr}. Recall that two ingredients were used. First, the sets on $\T^2$ corresponding to $n$-cylinder subsets of the subshift contain balls of at worst polynomially (in $n$) small radius and the collection of these sets on $\T^2$ has measure at least $\delta > 0$, which must be $n$-independent. Second, the orbit of the torus transformation becomes $\gamma$-dense in time polynomial in $\gamma^{-1}$.

For the first ingredient, note that by Lemma~\ref{l.12} and by exactly the same argument as in the proof of Lemma~\ref{l.shiftdistr}, for each $\beta\in(0,1)$, the union of all $n$-cylinder sets that each have measure at least $\beta / (Cn^3)$ has measure at least $1 - \beta$. Note in this case, the transformation does not preserve the rectangular structure. But it does send a rectangle to a parallelogram. Thus the sets on $\T^2$ corresponding to $n$-cylinder subsets of $\Omega$, which are of the form
$$
[\lambda_{i_0}\cdots\lambda_{i_{n-1}}]_{0,\ldots,n-1}=R_{i_0}\cap \tilde S^{-1}_\alpha(R_{i_1})\cap\cdots\cap \tilde S^{-(n-1)}_\alpha(R_{i_{n-1}}),
$$
are convex polygons contained in the original rectangle $R_{i_0}$. Thus, the sets on $\T^2$ corresponding to $n$-cylinder sets with measure at least $\beta/(Cn^3)$ in $\Omega$ must contain balls with radius $cn^{-3}$. And the union of such sets has measure at least $1-\beta$.

For the second ingredient, we want to estimate the following term
$$
\left\|S_Nf-N\int f\right\|_\infty
$$
for $f\in C^k(\T^2)$ with some large $k\in\Z^+$. To get the desired estimate, we need to decompose $f$ as
$$
f=\left(f-\int_\T f dy\right)+\int_\T fdy:=f_1+f_2
$$
and estimate $f_1$ and $f_2$ separately. For the estimate of $f_2$, it depends only on $x$. Thus the claim in Lemma~\ref{l.shiftdistr} gives us that
$$
\left\|S_N f_2 - N \int f\right\|_\infty\lesssim \|f\|_{C^k}
$$
For the estimate of $f_1$, we need to use the following estimate from \cite[Theorem~11]{AFU}

$$
\left\|S_{N_l} f_1\right\|_\infty \lesssim N_l^{\frac12} \|f\|_s\lesssim N_l^{\frac12}\|f\|_{C^k}
$$
along some $\alpha$-dependent sequence $N_l\rightarrow\infty$, where $\|\cdot\|_s$ is the Sobolev norm with index $s<k$. See \cite{AFU, FF} for more detailed information. The last inequality follows from a standard relation between the Sobolev norm and the $C^k$ norm.

Combining the estimates for $f_1$ and $f_2$, we get
\begin{equation}\label{e.flamforn}
\left\| S_{N_l}f - N_l \int f \right\| \lesssim N_l^{\frac12} \|f\|_{C^k}.
\end{equation}

Now for a fixed ball $\CB$ in $\T^2$ with radius $\gamma$, we can consider a smooth function $f$ that is non-negative and close to the characteristic function $\chi_\CB$ and is supported in $\CB$. Then, the integral of $f$ will be of order $\gamma^2$. The norm on the right-hand side of \eqref{e.flamforn} will be of order $\gamma^{-C'}$. Now the estimate \eqref{e.flamforn} implies that if
$$
N_l \gamma^2 \gtrsim N_l^{\frac12} \gamma^{-C'},
$$
then for every $(x,y) \in \T^2$, at least one of the first $N_l$ iterates of $(x,y)$ must meet the support of $f$. Simplifying, we find that
$$
N_l = C \gamma^{-2(C'+2)}
$$
is sufficient for a suitable constant $C$. Applying this to $\gamma=cn^{-3}$, the result follows with the sequence $n_l=[cN_l^{\frac{1}{6(C'+2)}}]$, $\delta=1-\beta$ and $(\varepsilon,n_l)$-independent $\Omega_l=\Omega$.
\end{proof}

\subsection{Interval Exchange Transformations}

Denote the unit interval $[0,1)$ by $I$. Given an irreducible permutation $\pi \in \mathcal{S}_r$ and lengths $\lambda_1,\ldots,\lambda_r > 0$ with $\sum_{j=1}^r \lambda_j = 1$, we consider the associated interval exchange transformation $T_{\pi,\lambda} : I \to I$. We either look at the standard coding of the trajectories by assigning $r$ real numbers to the $r$ intervals, or we consider a piecewise constant sampling function $f : I \to \R$ and code according to $f \circ T_{\pi,\lambda}^n$. Of course, the two viewpoints are equivalent. In any event, we obtain two-sided symbolic sequences that generate subshifts as before. These subshifts will be minimal if the IDOC (infinite distinct orbit condition) holds, and they will Lebesgue almost surely be uniquely ergodic and (if $\pi$ is not a rotation) weakly mixing. The push-forward of Lebesgue measure under $x \mapsto \{f(T_{\pi,\lambda}^nx)\}$ is always invariant and hence typically it is the unique ergodic measure. We will assume throughout that $\Omega$ is aperiodic, otherwise complexity and transitivity issues are trivial.

\begin{lemma}\label{l.14}
For the natural coding, we have that the complexity function of $\Omega$ satisfies $p(n) = (r-1)n + 1$. In particular, for any coding by a piecewise continuous sampling function, $\Omega$ has polynomial factor complexity, and hence it is almost surely of polynomial complexity with respect to every ergodic measure.
\end{lemma}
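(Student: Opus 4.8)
The plan is to extract the complexity of the natural coding directly from the dynamics of the discontinuities of $T_{\pi,\lambda}$, via the standard itinerary description. Write $\CP=\{I_1,\dots,I_r\}$ for the defining partition of $I$, with interior endpoints $\beta_1<\dots<\beta_{r-1}$, and for $n\ge 1$ let $\CP_n=\bigvee_{j=0}^{n-1}T_{\pi,\lambda}^{-j}\CP$ be the common refinement. The words of length $n$ occurring in $\Omega$ are exactly the $\CP$-names of the nonempty atoms of $\CP_n$: every atom name is realized along a window of the coding, and conversely, since $\Omega$ is minimal, every nonempty atom (being a nondegenerate interval, hence of nonempty interior) is met by the dense coding orbit. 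Thus $p(n)$ equals the number of nonempty atoms of $\CP_n$, and it remains to count these.

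For the count, set $C_n:=\bigcup_{j=0}^{n-1}T_{\pi,\lambda}^{-j}\{\beta_1,\dots,\beta_{r-1}\}$. The map sending $x\in[0,1)$ to its length-$n$ itinerary is locally constant off $C_n$: away from $C_n$, none of the maps $x\mapsto T_{\pi,\lambda}^{j}x$, $0\le j\le n-1$, crosses a boundary of $\CP$ nor is discontinuous there, the point being that the discontinuities of $T_{\pi,\lambda}$ itself on $[0,1)$ are among $\beta_1,\dots,\beta_{r-1}$. Hence $\CP_n$ is a partition of $[0,1)$ into at most $|C_n|+1$ subintervals, and since $T_{\pi,\lambda}$ is a bijection each set $T_{\pi,\lambda}^{-j}\{\beta_1,\dots,\beta_{r-1}\}$ has at most $r-1$ points, so $p(n)\le(r-1)n+1$. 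For the reverse inequality one invokes the IDOC: it forces the $(r-1)n$ points of $C_n$ to be distinct and each to be a genuine jump of the itinerary map. Indeed, if the forward orbit $x_0,\dots,T_{\pi,\lambda}^{j-1}x_0$ of $x_0:=T_{\pi,\lambda}^{-j}\beta_k$ met some $\beta_l$, then $\beta_k$ would lie on the forward orbit of $\beta_l$ with $l\ne k$, or the orbit of $\beta_k$ would be periodic, both excluded; so $T_{\pi,\lambda}^{j}$ is an increasing local homeomorphism near $x_0$, across which the $j$-th itinerary coordinate changes from $I_k$ to $I_{k+1}$. Thus $\CP_n$ has exactly $(r-1)n+1$ nonempty atoms and $p(n)=(r-1)n+1$.

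For the ``in particular'', let $f$ be piecewise constant, and let $\CP'$ be the finite partition of $I$ into intervals obtained by adjoining to $\CP$ the finitely many jump points of $f$. The coding of $I$ by $f\circ T_{\pi,\lambda}^n$ is a topological factor of the $\CP'$-itinerary coding, whose complexity is at most $(\mathrm{card}\,\CP'-1)n+1$ by the same counting argument (which used only that $\CP'$ is a finite interval partition refining $\CP$, so that the discontinuities of $T_{\pi,\lambda}$ lie among the boundary points of $\CP'$); since passing to a factor does not increase the complexity function, $\Omega$ has linear, hence polynomial, factor complexity. The almost sure polynomial complexity with respect to every ergodic measure then follows from the remark immediately after Definition~\ref{d.2}.

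The computation is elementary; the only delicate point is the bookkeeping in the second paragraph --- verifying that $\CP_n$ really is a partition of $[0,1)$ into intervals whose cut set lies in $C_n$ (which requires tracking the discontinuities of $T_{\pi,\lambda}$ and not merely the crossings of $\partial\CP$), and, for the sharp equality, deploying the IDOC precisely so that none of the $(r-1)n$ candidate cut points collide or turn out to be spurious. For the downstream applications only the linear upper bound is needed, and that holds regardless of the IDOC.
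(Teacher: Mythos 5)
The paper's own ``proof'' of this lemma is the single sentence ``This is well known,'' so there is no written argument to compare against; your write-up supplies the standard one. The key points are all present and correct: the length-$n$ words of the natural coding correspond to nonempty atoms of $\bigvee_{j=0}^{n-1}T_{\pi,\lambda}^{-j}\CP$, which are intervals whose cut points lie in $C_n=\bigcup_{j=0}^{n-1}T_{\pi,\lambda}^{-j}\{\beta_1,\dots,\beta_{r-1}\}$ (this uses that the discontinuities of $T_{\pi,\lambda}$, not merely the $\CP$-boundaries, are among the $\beta_i$), yielding $p(n)\le(r-1)n+1$; and the IDOC forces the $(r-1)n$ points of $C_n$ to be pairwise distinct genuine jump points of the itinerary, yielding equality. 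The passage to an arbitrary piecewise constant sampling function via the refined interval partition $\CP'$ and the letter-to-letter factor map is also the usual argument, and you correctly note that only the linear upper bound --- which holds with or without the IDOC --- feeds into Theorem~\ref{t.4}. One small remark: the proof that the itinerary map is locally constant off $C_n$ shows directly that the number of distinct itineraries is at most the number of connected components of $[0,1)\setminus C_n$, which is all that is needed for the upper bound; the stronger statement that each atom is a \emph{single} interval (rather than a union of components) follows by an easy induction on $n$ using that $I_j\cap T_{\pi,\lambda}^{-1}(A)=I_j\cap(A-c_j)$ is an interval when $A$ is, but your count does not actually require it.
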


\begin{proof}
This is well known.
\end{proof}

\begin{lemma}\label{l.15}
$\Omega$ is almost surely polynomially transitive with respect to the push-forward of Lebesgue measure {\rm (}which is almost surely the unique ergodic measure{\rm )}.
\end{lemma}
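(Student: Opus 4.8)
The plan is to follow the two-ingredient template established in the proofs of Lemmas~\ref{l.shiftdistr} and \ref{l.13}, adapting each ingredient to the IET setting. Recall that to verify almost sure polynomial transitivity we must produce, for each $\beta \in (0,1)$, a collection of $n$-cylinder sets whose total measure exceeds $1-\beta$, each of which (viewed as a subset of $I$) contains an interval of length at worst polynomially small in $n$, and a quantitative equidistribution statement guaranteeing that every orbit of $T_{\pi,\lambda}$ becomes $\gamma$-dense in time polynomial in $\gamma^{-1}$.

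For the first ingredient, I would use Lemma~\ref{l.14}: the number of $n$-cylinders is exactly $(r-1)n+1$, and the set in $I$ corresponding to an $n$-cylinder $[\lambda_{i_0}\cdots\lambda_{i_{n-1}}]$ is $R_{i_0} \cap T_{\pi,\lambda}^{-1}(R_{i_1}) \cap \cdots \cap T_{\pi,\lambda}^{-(n-1)}(R_{i_{n-1}})$, an intersection of $n$ preimages of the $r$ defining subintervals. Since $T_{\pi,\lambda}$ is piecewise a translation, each such set is again a finite union of intervals; in fact the partition of $I$ into level-$n$ cylinder sets consists of at most $(r-1)n+1$ atoms, each an interval. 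As in the earlier proofs, discarding those atoms of length less than $\beta/((r-1)n+1)$ costs at most total measure $\beta$, so the retained atoms have total measure at least $1-\beta$ and each is an interval (hence contains a ball) of length at least $\beta/((r-1)n+1)$, which is only polynomially (in fact linearly) small in $n$. Once the orbit of the coded point visits such an atom, the corresponding $n$-cylinder in $\Omega$ is realized, exactly as before.

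The second ingredient is the place where the argument genuinely differs, and it is the main obstacle: one needs a quantitative unique ergodicity / equidistribution rate for $T_{\pi,\lambda}$ that is polynomial in the scale $\gamma$, and this is \emph{not} available for every $\lambda$ — it holds only for a full-measure (or residual, Diophantine-type) set of length data. The clean way to obtain it is via the Rauzy--Veech renormalization and the Birkhoff sums of the cohomological-type machinery: for almost every $\lambda$ (with respect to Lebesgue measure on the simplex), the Rauzy--Veech cocycle has the spectral gap / Oseledets behaviour needed so that special Birkhoff sums of characteristic functions of intervals are controlled, yielding $\|S_N \chi_J - N|J|\|_\infty \lesssim N^{1-\kappa}$ for some $\kappa>0$ along a suitable sequence $N_l \to \infty$, uniformly enough in the interval $J$; one may instead invoke the Zorich/Forni-type estimates or, in the spirit of the proof of Lemma~\ref{l.13}, the bounds of \cite{AFU, FF} adapted to IETs. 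Approximating $\chi_\CB$ from below by a smooth (or simply a smaller interval) $f$ supported in $\CB$ with $\int f \approx \gamma$ and the relevant norm $\lesssim \gamma^{-C'}$, the estimate forces $S_{N_l} f > 0$ once $N_l \gamma \gtrsim N_l^{1-\kappa}\gamma^{-C'}$, i.e. for some $N_l$ polynomially large in $\gamma^{-1}$; and compactness makes this bound uniform in the base point.

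Assembling the two ingredients: apply the equidistribution bound with $\gamma = c/((r-1)n+1)$ (polynomially small in $n$), so that every orbit of $T_{\pi,\lambda}$ visits each retained level-$n$ cylinder atom within a time $C' n^{C'}$ that is polynomial in $n$. Translating back to the subshift via the coding, this says precisely that for $\mu$-a.e.\ $\omega$ (indeed every $\omega$ in the orbit of the coded point, and then all of $\Omega$ by minimality), the words of length $C' n^{C'}$ occurring starting at any position contain, with frequency mass at least $\delta = 1-\beta$, the length-$n$ subwords we retained — which is the inequality in Definition~\ref{d.1}. Thus $\Omega$ is almost surely polynomially transitive with $\Omega_k = \Omega$ and the stated $\delta$, completing the proof. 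The subtlety to be careful about is that the polynomial equidistribution rate holds only for almost every $\lambda$; one should either state the lemma for such $\lambda$ or note that the Keane IDOC together with the generic Diophantine condition on $\lambda$ is exactly the hypothesis under which the subshift is the one of interest, so no generality relevant to the paper's applications is lost.
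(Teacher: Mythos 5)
Your first ingredient matches the paper's opening observation: $n$-cylinders are intervals, there are $(r-1)n+1$ of them, and discarding the ones shorter than $O(1/n)$ costs measure at most $\varepsilon/2$. But your second ingredient is where the proposal departs from the paper's proof and, as written, does not establish the lemma.

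The paper does not use any quantitative equidistribution estimate at all. Instead it exploits two things you have not invoked. First, a theorem of Boshernitzan (reference \cite{B85}) says that for an aperiodic IET the set of \emph{distinct lengths} of the $n$-cylinder intervals has cardinality bounded by a constant $C$ independent of $n$. Second, over each $n$-cylinder $J$ of length $\ell$ one can build a Rokhlin tower: iterate $T_{\pi,\lambda}^{-1}$ on $J$ until a pre-image hits one of the at most $r-1$ discontinuity points of $T_{\pi,\lambda}^{-1}$, and call the resulting height $h$. The towers with $h\ell < \varepsilon/(2(r-1)C)$ are few enough (because there are at most $(r-1)\cdot C$ of them, by Boshernitzan's bound combined with the bound on discontinuities) that their total mass is at most $\varepsilon/2$, so they can be thrown away alongside the short cylinders. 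For a retained (``good'') cylinder one has simultaneously $\ell \ge \varepsilon/(2rn)$ and $h\ell \ge \varepsilon/(2(r-1)C)$; taking the partial tower of height $\varepsilon/(2(r-1)C\ell) \le \frac{r}{(r-1)C}\,n$ gives a union of disjoint intervals of total measure at least $\delta := \varepsilon/(2(r-1)C)$, all reached from the cylinder in linearly many steps. This is exactly the $\mu\bigl(\bigcup_{m=0}^{cn}[(T^m\omega)_{[0,n-1]}]\bigr) > \delta$ required by Definition~\ref{d.1}, with a linear, not merely polynomial, window.

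The decisive structural point you are missing is that Definition~\ref{d.1} only asks for a set $\Omega_n$ of $\mu$-measure $> 1-\varepsilon$ on which the transitivity estimate holds, and only asks that the union of visited cylinders have measure $> \delta$, not that \emph{all} large cylinders be visited. The paper uses exactly this flexibility: $\Omega_n$ is a proper subset of $\Omega$ formed by the good cylinders, and the orbit is only required to sweep out the tower over the cylinder it starts in. Your plan, modeled on Lemmas~\ref{l.shiftdistr} and \ref{l.13}, tries to prove the much stronger statement that every orbit becomes $\gamma$-dense in polynomial time; that is where you are forced to invoke a polynomial equidistribution rate, and you correctly observe that such a rate is at best available for a full-measure set of length data. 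But that is not a minor loss: the lemma as stated (and as used in the paper) is for every aperiodic IET, with no Diophantine-type restriction on $\lambda$, and the paper's tower argument delivers exactly that. Moreover, the quantitative equidistribution estimate you gesture at --- $\|S_N\chi_J - N|J|\|_\infty \lesssim N^{1-\kappa}$ uniformly enough in $J$ --- is not supplied by the references \cite{AFU,FF}, which concern nilflows, and is in fact a substantially harder and more fragile statement for IETs than for rotations or skew-shifts. The upshot is a genuine gap: the approach would need an unavailable (or at least unreferenced and non-trivial) analytic input and would still only prove a weaker, almost-everywhere-in-$\lambda$ version of the lemma, whereas the Boshernitzan-plus-Rokhlin-towers argument is elementary, entirely combinatorial, and covers every aperiodic IET.
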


\begin{proof}
Without loss of generality we consider the case of natural coding. We will show that for every $\varepsilon > 0$, there exist $\delta>0$ and $c>0$ such that for every $n$, there is $\Omega_n \subseteq \Omega$ with $\mu(\Omega_n) > 1 - \varepsilon$ such that for every $\omega \in \Omega_n$, we have
$$
\mu\left( \bigcup_{m = 0}^{c n} \left[ (T_{\pi,\lambda}^m \omega)_{[0,n-1]} \right] \right) > \delta.
$$

Note that $n$-cylinder sets are obtained in the geometric picture (on $I$) by intersecting intervals as follows:
$$
[\eta_0 \ldots \eta_{n-1}]_{0,\ldots,n-1} = I_{\eta_0} \cap T_{\pi,\lambda}^{-1}(I_{\eta_1}) \cap \cdots \cap T_{\pi,\lambda}^{-(n-1)}(I_{\eta_{n-1}}).
$$
Thus, when considering the $n$-cylinder sets visited by a piece of a $T_{\pi,\lambda}$-orbit, we can consider pieces of Rokhlin towers obtained by starting at some level corresponding to an $n$-cylinder set, denoting the length of this interval by $\ell$, and take pre-images under $T_{\pi,\lambda}$. Since $\Omega$ is aperiodic, we will eventually encounter a pre-image that contains a point of discontinuity of $T_{\pi,\lambda}^{-1}$, and then we stop the iteration short of that. The height of this piece of the tower will be denoted by $h$. It is known \cite{B85} that there is a constant $C$ such that for every $n$, the number of values the length $\ell$ can take for $n$-cylinder sets is bounded by $C$. Note also that $T_{\pi,\lambda}^{-1}$ has at most $r-1$ discontinuity points. Finally, we use that the number of $n$-cylinders is $(r-1)n + 1$.

Now let us partition the $n$-cylinder sets into a group of good ones and two groups of bad ones. The first group of bad ones consists of those that have length $\le \frac{\varepsilon}{2rn}$ (they make up measure less than $\frac{\varepsilon}{2}$ by the complexity result). The second group of bad ones are those for which $h \ell < \frac{\varepsilon}{2(r-1) C}$ (so that $h < \frac{\varepsilon}{2(r-1) C \ell}$). Their total measure is bounded as follows:
$$
\mathrm{Leb} \left( \bigcup_{\text{disc.\ of } T_{\pi,\lambda}^{-1}} \; \bigcup_{\text{values of } \ell} \; \bigcup_{k=0}^h \text{level}(k) \right) \le (r-1) C \frac{\varepsilon}{2(r-1) C \ell} \ell = \frac{\varepsilon}{2}.
$$
All remaining $n$-cylinders are good and their union will form the set $\Omega_n$. Each of them has length $\ell$ at least $\frac{\varepsilon}{2rn}$ and their union has measure at least $1 - \varepsilon$. The tower starting from each of them and going down has height at least $\frac{\varepsilon}{2(r-1) C \ell}$, consists of disjoint intervals of length at least $\ell$. Thus, taking the partial piece of the tower of height
$$
\frac{\varepsilon}{2(r-1) C \ell} \le \frac{\varepsilon}{2(r-1) C} \frac{2rn}{\varepsilon} = \frac{r}{(r-1)C} n,
$$
which is polynomially bounded in $n$ as desired, we obtain a set of measure at least $\delta := \frac{\varepsilon}{2(r-1) C} > 0$.
\end{proof}

\section{One-Frequency Quasi-Periodic Potentials}\label{sec.4}

In this section we consider one-frequency quasi-periodic potentials. They arise from the general framework by setting $\Omega = \T = \R / \Z$ and $T : \T \to \T$, $\omega \mapsto \omega + \alpha$ with some $\alpha \in \T$. Note that $T$ is minimal if and only if $\alpha$ is irrational.

Given a bounded measurable sampling function $f : \T \to \R$, we consider Schr\"odinger operators $\{ H_\omega \}_{\omega \in \T}$ defined as before; compare~\eqref{e.oper}. Deviating slightly from the notation introduced in Section~\ref{sec.2}, we will denote
$$
\Sigma = \bigcup_{\omega \in \T} \sigma(H_\omega).
$$
This notation coincides with the one in Section~\ref{sec.2} when $\alpha$ is irrational (and $f$ is continuous). When $\alpha$ is rational, the set $\Sigma$ (which was not defined in Section~\ref{sec.2} because $T$ is not minimal in this case) as defined here will be convenient when we approximate irrational $\alpha$'s by rational ones. Similarly, the density of states measure $dk$ on $\R$ is given by
$$
\int g \, dk = \int_\T \langle \delta_0 , g(H_\omega) \delta_0 \rangle \, d\omega,
$$
which coincides with the previous definition when $\alpha$ is irrational, but is needed also for rational $\alpha$ in this form for our approximation purposes.

\begin{lemma}\label{l.1}
Let $f \in C(\T,\R)$. Then,
$$
\Sigma = \R \setminus \{ E : (T, A^{E-f}) \text{ is uniformly hyperbolic} \}.
$$
\end{lemma}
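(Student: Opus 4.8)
The plan is to establish the two inclusions separately, relying on Johnson's theorem (Proposition~\ref{p.johnson}) as the bridge between resolvent sets and uniform hyperbolicity. First I would handle the irrational case, since it is essentially immediate: when $\alpha$ is irrational, $T$ is minimal and every $\omega \in \T$ has a dense orbit, so Proposition~\ref{p.johnson} gives directly that $\R \setminus \Sigma = \CU\CH$, where $\CU\CH = \{ E : (T, A^{E-f}) \text{ uniformly hyperbolic}\}$; here $\Sigma = \sigma(H_\omega)$ for every $\omega$ by Proposition~\ref{p.minspec}. So the content of the lemma is really in the rational case, where $T$ is not minimal and $\Sigma$ is defined as the union $\bigcup_{\omega \in \T} \sigma(H_\omega)$.

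So suppose $\alpha = p/q$ is rational. I would first prove the inclusion $\R \setminus \Sigma \subseteq \CU\CH$. Fix $E \notin \Sigma$, i.e.\ $E \notin \sigma(H_\omega)$ for every $\omega \in \T$. For rational $\alpha$ the potential $V_\omega$ is $q$-periodic, so by Floquet theory $E \in \sigma(H_\omega)$ if and only if $|\mathrm{Tr}\, A^{E-f}_q(\omega)| \le 2$, i.e.\ the monodromy matrix $A^{E-f}_q(\omega)$ is elliptic or parabolic. Hence $E \notin \Sigma$ means $|\mathrm{Tr}\, A^{E-f}_q(\omega)| > 2$ for all $\omega$, so $A^{E-f}_q(\omega)$ is hyperbolic for every $\omega$. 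Since $\omega \mapsto A^{E-f}_q(\omega)$ is continuous on the compact set $\T$ and its trace is bounded away from $\pm 2$, the hyperbolicity is uniform: the norms of $A^{E-f}_{nq}(\omega) = A^{E-f}_q(T^{(n-1)q}\omega)\cdots A^{E-f}_q(\omega)$ grow like $\lambda^n$ with $\lambda > 1$ uniform in $\omega$, and filling in the at most $q$ intermediate steps only changes this by a bounded factor. Thus $(T, A^{E-f})$ is uniformly hyperbolic, so $E \in \CU\CH$.

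For the reverse inclusion $\CU\CH \subseteq \R \setminus \Sigma$, fix $E$ with $(T, A^{E-f})$ uniformly hyperbolic; then $\|A^{E-f}_n(\omega)\| \ge c\lambda^{|n|}$ uniformly, and in particular $A^{E-f}_q(\omega)$ cannot be elliptic or parabolic for any $\omega$ (otherwise the norms along multiples of $q$ would stay bounded), so $|\mathrm{Tr}\, A^{E-f}_q(\omega)| > 2$ for all $\omega$, whence $E \notin \sigma(H_\omega)$ for all $\omega$, i.e.\ $E \notin \Sigma$. Combining the two cases gives the claimed identity. I expect the only genuine subtlety to be the passage from ``$A^{E-f}_q(\omega)$ hyperbolic for each $\omega$'' to genuine \emph{uniform} hyperbolicity of the cocycle: one must check that continuity plus compactness yields a uniform spectral radius and that inserting the bounded block of intermediate transfer matrices (and, for negative $n$, the analogous backward products) does not destroy the exponential lower bound. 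This is standard but is the step where care is needed; everything else is a routine application of Floquet theory and Proposition~\ref{p.johnson}.
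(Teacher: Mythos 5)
Your proof follows the same two-pronged route as the paper: the paper invokes Proposition~\ref{p.johnson} for irrational $\alpha$ (exactly as you do) and declares the rational case ``trivial,'' which is precisely the Floquet-theory computation you carry out. However, one step in your rational-case argument is justified by the wrong reason and should be tightened. You pass from ``$A^{E-f}_q(\omega)$ is hyperbolic for every $\omega$, with trace bounded away from $\pm 2$'' to ``the cocycle is uniformly hyperbolic'' by appealing to continuity and compactness. For a general cocycle that inference is false: a product of matrices that are each hyperbolic with uniform spectral gap need not be hyperbolic, since the expanding directions can rotate against each other from step to step. What makes the rational case actually trivial is the identity $T^q = \mathrm{id}$ (for $\alpha = p/q$ in lowest terms), which turns your displayed product $A^{E-f}_q(T^{(n-1)q}\omega)\cdots A^{E-f}_q(\omega)$ into the $n$-th power $\bigl(A^{E-f}_q(\omega)\bigr)^n$ of a single matrix. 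Then $\|A^{E-f}_{nq}(\omega)\| \ge \rho\bigl(A^{E-f}_q(\omega)\bigr)^n \ge (1+\delta')^n$ once $|\mathrm{Tr}\,A^{E-f}_q(\omega)| \ge 2+\delta$ uniformly, and the at most $q$ intermediate (or negative) steps cost only a bounded multiplicative factor since the one-step matrices lie in a bounded subset of $\mathrm{SL}(2,\R)$. The same observation is what powers the converse inclusion: if some $A^{E-f}_q(\omega)$ were elliptic or parabolic, its powers would grow subexponentially, contradicting uniform hyperbolicity. With $T^q = \mathrm{id}$ stated explicitly, your argument is complete and coincides with what the paper means by ``trivial.''
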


\begin{proof}
This follows from Johnson's theorem \cite{J} (see Proposition~\ref{p.johnson}) when $\alpha \not\in \Q$, and it is trivial when $\alpha \in \Q$.
\end{proof}

\begin{lemma}\label{l.2}
The map $C(\T,\R) \times \T \ni (f,\alpha) \mapsto \Sigma$ is continuous with respect to the Caratheodory metric, $d$, on the compact subsets of $\R$.
\end{lemma}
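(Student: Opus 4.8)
The plan is to prove continuity by establishing the two one-sided estimates whose conjunction is exactly smallness in the Caratheodory metric: for $(f,\alpha)$ near a fixed $(f_0,\alpha_0)$ one needs (i) $\Sigma(f,\alpha)$ contained in an $\varepsilon$-neighborhood of $\Sigma(f_0,\alpha_0)$, and (ii) $\Sigma(f_0,\alpha_0)$ contained in an $\varepsilon$-neighborhood of $\Sigma(f,\alpha)$. One first records the uniform a priori bound $\Sigma(f,\alpha) \subseteq [-\|f\|_\infty - 2, \|f\|_\infty + 2]$, so that for $f$ near $f_0$ all the sets involved lie in one fixed compact interval $K \subset \R$ (and each $\Sigma$ is closed, hence compact), which legitimizes the limiting arguments below.

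Estimate (ii) is the soft direction and I would get it from a Weyl-sequence argument. Given $E_0 \in \Sigma(f_0,\alpha_0)$ and $\varepsilon > 0$, pick $\omega_0 \in \T$ with $E_0 \in \sigma(H^{f_0,\alpha_0}_{\omega_0})$ and, by truncating a Weyl sequence, a finitely supported unit vector $\psi$ with $\|(H^{f_0,\alpha_0}_{\omega_0} - E_0)\psi\| < \varepsilon/2$. Since $\psi$ has finite support, $H^{f,\alpha}_{\omega_0}$ differs from $H^{f_0,\alpha_0}_{\omega_0}$ on $\mathrm{supp}\,\psi$ only through the finitely many potential values $f(\omega_0 + n\alpha)$ versus $f_0(\omega_0 + n\alpha_0)$, and these differ by at most $\|f-f_0\|_\infty$ plus a term controlled by the modulus of continuity of $f_0$ applied to $|n|\,|\alpha-\alpha_0|$ with $n$ ranging over the bounded set $\mathrm{supp}\,\psi$. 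Hence $\|(H^{f,\alpha}_{\omega_0} - E_0)\psi\| < \varepsilon$ once $\|f-f_0\|_\infty$ and $|\alpha-\alpha_0|$ are small enough, so $\mathrm{dist}(E_0, \sigma(H^{f,\alpha}_{\omega_0})) < \varepsilon$ and a fortiori $\mathrm{dist}(E_0, \Sigma(f,\alpha)) < \varepsilon$; compactness of $K$ upgrades this to the required uniform statement. Note this argument genuinely needs the finitely supported vector, since the perturbation of the potential is \emph{not} small in $\ell^\infty(\Z)$.

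Estimate (i) is the heart of the matter, and I would argue it by contradiction using Lemma~\ref{l.1}, which identifies $\R \setminus \Sigma$ with the uniformly hyperbolic locus. If (i) fails there are $(f_j,\alpha_j) \to (f_0,\alpha_0)$ and $E_j \in \Sigma(f_j,\alpha_j)$ with $\mathrm{dist}(E_j, \Sigma(f_0,\alpha_0)) \ge \varepsilon$; passing to a subsequence $E_j \to E_\infty \in K$, we get $E_\infty \notin \Sigma(f_0,\alpha_0)$, i.e. the cocycle $(T_{\alpha_0}, A^{E_\infty - f_0})$ is uniformly hyperbolic. It then suffices to show $(T_{\alpha_j}, A^{E_j-f_j})$ is uniformly hyperbolic for $j$ large, contradicting $E_j \in \Sigma(f_j,\alpha_j)$. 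For this I would use the standard description of uniform hyperbolicity through invariant cone fields (equivalently, exponential dichotomy; see \cite{J,Y}): there are continuous cone fields $\omega \mapsto \mathcal{C}^{u}(\omega),\mathcal{C}^{s}(\omega)$ on $\T$ with $A^{E_\infty - f_0}(\omega)\,\overline{\mathcal{C}^u(\omega)} \subset \mathrm{int}\,\mathcal{C}^u(\omega + \alpha_0)$ and uniform expansion inside $\mathcal{C}^u$ forward, and dually for $\mathcal{C}^s$ backward. Since $A^{E-f}(\omega)$ depends on $(E,f)$ only through the scalar $E - f(\omega)$, one has $A^{E_j - f_j}(\cdot) \to A^{E_\infty - f_0}(\cdot)$ uniformly on $\T$; and since $\omega + \alpha_j \to \omega + \alpha_0$ uniformly while the cone fields are uniformly continuous on the compact $\T$, the strict cone inclusions and the uniform expansion/contraction persist for $(T_{\alpha_j}, A^{E_j-f_j})$ once $j$ is large, so this cocycle is uniformly hyperbolic.

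The step I expect to be the main obstacle is this last one: the perturbation moves the base map $T_\alpha$ itself — and even changes its topological type, since $\alpha_0$ may be rational while the $\alpha_j$ are irrational — so one cannot simply quote openness of uniform hyperbolicity for a fixed base dynamics. The cone-field formulation is what makes the argument go through, because it is a pointwise condition on $\T$ whose target cone moves only slightly when the iterate of the base point moves slightly; here the relevant robustness input is precisely that strict cone inclusions together with uniform expansion are stable under $C^0$-small perturbation of \emph{both} the fibered maps and the base. Finally, because Lemma~\ref{l.1} covers the rational case as well (where it is elementary), the whole argument is uniform across the rational/irrational divide, which is exactly what is needed for the later approximation of irrational frequencies by rational ones.
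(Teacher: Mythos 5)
Your argument is correct, and the two one-sided estimates you prove are exactly Hausdorff (Caratheodory) continuity given the uniform bound $\Sigma(f,\alpha)\subseteq[-\|f\|_\infty-2,\|f\|_\infty+2]$. Your estimate (ii) is essentially the same finitely supported Weyl-sequence idea the paper is built on (the uniform bound $m(c,\eta)$ on the support size is what makes only finitely many potential values matter), and it is the direction the paper treats as the easy one, leaving it largely implicit in its ``it suffices to show'' reduction. Where you genuinely diverge is estimate (i). The paper handles that direction also through the Weyl description: for $E_s\in\Sigma_s$ with $E_s\to E$ it picks phases $\omega_s$ and compactly supported approximate eigenvectors, recenters them near the origin, passes to subsequences in the phase using compactness of $\T$ and uniform continuity of $f$, and concludes $E\in\sigma(H^{f,\alpha}_\omega)\subseteq\Sigma$. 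You instead invoke Lemma~\ref{l.1} to convert $E_\infty\notin\Sigma(f_0,\alpha_0)$ into uniform hyperbolicity of $(T_{\alpha_0},A^{E_\infty-f_0})$, and then argue that uniform hyperbolicity persists under $C^0$-small perturbations of \emph{both} the fiber maps and the base rotation, via strict invariant cone fields. That persistence statement, with the base moving and possibly crossing the rational/irrational divide, is precisely the nontrivial input you correctly identify as the crux; the cone-field formulation is indeed the right package, since the strict inclusion and uniform expansion are pointwise conditions on $\T$ that are stable under small displacement of the target phase $\omega+\alpha$ and small perturbation of $A^{E-f}(\omega)$, by uniform continuity and compactness. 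The trade-off: the paper's route is self-contained soft functional analysis plus compactness, needing only the Weyl description; yours is more conceptual and ties the continuity of $\Sigma$ directly to the dynamical characterization of its complement that the rest of the section revolves around, at the cost of quoting a structural-stability fact for $\mathrm{SL}(2,\R)$ cocycles over rotations that the paper does not need at this point.
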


\begin{proof}
Assume $\|f\|_\infty<c$. We need to use the following description of spectrum: $E \in \Sigma$ if and only if there exists $\omega \in \T$ such that for every $\eta > 0$, there are $m = m(c,\eta) \in \Z_+$, a unit vector $u = (u_n)_{n \in \Z} \in \ell^2(\Z)$, and $M \in \Z$ with $u_n = 0$ for $|n - M| > m$, such that $\|(H_\omega - E)u\| \leq \eta$.

Now to prove the lemma, it suffices to show the following. Given $\lim_{s \to \infty} (f_s,\alpha_s) = (f,\alpha)$ in $C(\T,\R) \times \T$, let $\Sigma_s$ be the spectrum corresponding to $(f_s, \alpha_s)$. Let $E_s \in \Sigma_s$ be such that $\lim_{s \to \infty} E_s = E$, then $E \in \Sigma$.

By definition, for every $s$, there exists $\omega_s \in \T$ such that $E_s \in \sigma(H_{f_s, \alpha_s, \omega_s})$. Then, after a translation in $\omega_s$, say $\omega_s + M_{l,s} \alpha_s$, we get that for each $l \in \Z_+$, there are $m_l \in \Z_+$ and unit vectors $u^{l,s}$ with $u^{l,s}_n = 0$ for $|n| > m_l$ such that
$$
\|(H_{\alpha_s, f_s, \omega_s + M_{l,s} \alpha_s} - E_s) u_{l,s}\| < \frac{1}{2^{l+1}}.
$$
By passing to a subsequence we may assume that $\lim_{s \to \infty} \omega_s + M_{l,s} \alpha_s = \omega^l$. Thus for some large $s_l$, we have
$$
\|(H_{\alpha, f, \omega^l} - E) u_{l,s_l}\| < \frac{1}{2^{l}}.
$$
Again we may assume that $\lim_{l \to \infty} \omega^l = \omega$. Now for any $\eta > 0$, we can choose a sufficiently large $l$ such that $\|f(\omega + \cdot) - f(\omega^l + \cdot)\|_{\infty} + \frac{1}{2^l} < \eta$. Thus we have
$$
\|(H_{\alpha, f, \omega} - E) u_{l,s_l}\| < \eta,
$$
which implies that $E \in \sigma(H_{\alpha, f, \omega}) \subset \Sigma$.
\end{proof}

\begin{lemma}\label{l.3}
For every $r > 0$, the map $\T \times (B_r(L^\infty(\T,\R)), \|\cdot\|_1) \ni (\alpha, f) \mapsto k$ is continuous with respect to uniform convergence. Here, $B_r(L^\infty(\T,\R))$ is the open ball around the origin with radius $r$ in $L^\infty(\T,\R)$.
\end{lemma}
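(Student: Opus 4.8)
The plan is to realize $dk$ as a probability measure on the fixed compact interval $K_r=[-(2+r),2+r]$ — which is legitimate since $\|H_\omega\|\le 2+\|f\|_\infty<2+r$ for ($\mu$-a.e.) $\omega$ — to show that it depends weak-$*$ continuously on $(\alpha,f)$, and then to upgrade this to uniform convergence of the distribution functions using continuity of the IDS (i.e.\ atomlessness of $dk$).

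First I would reduce to convergence of moments. Since all the measures $dk$ live on the common compact set $K_r$, Weierstrass approximation shows it suffices to prove that $\int g\,dk$ depends continuously on $(\alpha,f)$ for every polynomial $g$, hence by linearity only for $g(E)=E^m$. Writing $H_\omega=L+R+V_\omega$, with $L,R$ the two shift operators and $V_\omega$ multiplication by $n\mapsto f(\omega+n\alpha)$, expansion of $(L+R+V_\omega)^m$ gives
$$
\langle\delta_0,H_\omega^m\delta_0\rangle=\sum_{w}\ \prod_{p\in\Z}f(\omega+p\alpha)^{c_p(w)},
$$
a finite sum over the closed nearest-neighbour walks $w$ of length $m$ based at $0$, all staying in $[-m,m]$, where $c_p(w)\in\Z_{\ge 0}$ counts how often $V_\omega$ is applied at site $p$, with $\sum_p c_p(w)\le m$. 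Integrating over $\omega\in\T$,
$$
\int E^m\,dk=\sum_{w}\ \int_\T\prod_{p}f(\omega+p\alpha)^{c_p(w)}\,d\omega,
$$
so each moment is a finite sum of integrals over $\T$ of products of at most $2m+1$ translates of $f$, each bounded in modulus by $r$.

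Next I would prove that each such integral is continuous in $(\alpha,f)$. Suppose $\alpha_s\to\alpha$ in $\T$ and $\|f_s-f\|_1\to0$ with $\|f_s\|_\infty,\|f\|_\infty\le r$. Translation invariance of Lebesgue measure together with $L^1$-continuity of translation gives, for each fixed $p$,
$$
\|f_s(\cdot+p\alpha_s)-f(\cdot+p\alpha)\|_1\le\|f_s-f\|_1+\|f(\cdot+p\alpha_s)-f(\cdot+p\alpha)\|_1\longrightarrow 0 .
$$
A finite product (and likewise any finite power) of uniformly $L^\infty$-bounded, $L^1$-convergent sequences again converges in $L^1$ — by iterating the elementary bound $\|g'h'-gh\|_1\le\|g'\|_\infty\|h'-h\|_1+\|h\|_\infty\|g'-g\|_1$. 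Hence $\prod_p f_s(\cdot+p\alpha_s)^{c_p(w)}\to\prod_p f(\cdot+p\alpha)^{c_p(w)}$ in $L^1(\T)$ for each $w$, their integrals converge, and therefore $\int E^m\,dk_s\to\int E^m\,dk$ for every $m$. By the reduction above, $dk_s\to dk$ weak-$*$.

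Finally I would upgrade weak-$*$ convergence to uniform convergence of the integrated densities of states. The limiting measure $dk$ has no atoms: for irrational $\alpha$ this is the general continuity of the IDS recalled in Section~\ref{sec.2}, while for rational $\alpha$ the operators $H_\omega$ are periodic, so $dk$ is absolutely continuous. Thus every $E\in\R$ is a continuity point of $k$, and weak-$*$ convergence yields $k_s(E)\to k(E)$ for all $E$. Since the $k_s$ and $k$ are non-decreasing, valued in $[0,1]$, identically $0$ (resp.\ $1$) to the left (resp.\ right) of $K_r$, and $k$ is continuous, a P\'olya-type argument — cover $K_r$ by finitely many intervals on which $k$ oscillates by at most $\varepsilon$, then use monotonicity — promotes pointwise convergence to uniform convergence, $\|k_s-k\|_\infty\to0$. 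The one step demanding care is precisely this last one: atomlessness of the limiting $dk$ is essential, since without it weak-$*$ convergence only forces pointwise convergence of the IDS off the at most countable set of atoms, which would not give the uniform statement claimed.
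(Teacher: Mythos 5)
Your proof is correct, but it takes a genuinely different route from the paper's. The paper's proof represents the IDS by its harmonic extension to the upper half-plane, $k(E) = 1 - \frac{1}{\pi}\int_\T \arg u(E,\alpha,f,\omega)\,d\omega$ for $E \in \HH$, where $u$ is the unstable direction of the (uniformly hyperbolic) cocycle; it establishes boundedness and pointwise-a.e.\ convergence of $u$ in $(\alpha,f)$, mimics \cite[Lemma~1]{AD} to get $L^1$-convergence of $k$ on a compact energy window, extracts an a.e.\ pointwise convergent subsequence, and finally upgrades to uniform convergence. You instead prove weak-$*$ convergence of the density of states measures directly by the moment method, expanding $\langle\delta_0,H_\omega^m\delta_0\rangle$ over lazy closed walks and exploiting translation invariance of Lebesgue measure together with $L^1$-continuity of translation and the elementary product estimate in $L^1\cap L^\infty$. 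Both proofs share the final step of using atomlessness of the limiting $dk$ to promote a weaker mode of convergence to uniform convergence of the distribution functions, and you correctly flag that atomlessness must also be checked at rational $\alpha$ (via absolute continuity of the density of states for periodic operators), since the general continuity of the IDS recalled in Section~\ref{sec.2} is stated for an ergodic $\mu$, which Lebesgue measure on $\T$ is not under a rational rotation. Your approach is more elementary and self-contained, avoiding the Thouless/Herman-type representation entirely; the paper's approach stays within the cocycle machinery it uses throughout and recycles the dominated-convergence argument from \cite{AD}.
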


\begin{proof}
To prove this lemma, it suffices to show that for any uniformly bounded convergent sequence $(\alpha_s, f_s) \in \T \times L^1(\T,\R)$, thus $\lim_{s \to \infty} (\alpha_s, f_s) = (\alpha, f)$ for some $(\alpha, f)$, we can find a subsequence $(\alpha_{s_l}, f_{s_l})$ such that the corresponding IDS $k_{s_l}$ satisfies
$$
\lim_{l \to \infty} \|k_{s_l} - k\|_{\infty} = 0.
$$

Since all sampling functions $g$ in this proof satisfy $\|g\|_\infty < r$, all IDS's are distribution functions of some probability measures supported in $[-r-2, r+2]$. Thus it is sufficient to show that for a countable dense subset $\CE$ in $[-r-2, r+2]$, $\lim_{l \to \infty} k_{s_l}(E) = k(E)$ for each $E \in \CE$. In particular, it will be sufficient to show that $k_{s_l}(E)$ converges to $k$ pointwise in $[-r-2, r+2]$.

Now we may start with a sequence $(\alpha_s, f_s)$ such that $f_s\rightarrow f$ in $L^1$ and pointwise.

Let $\HH$ be the upper half plane in $\C$. It is well-known that for each $E \in \HH$, the cocycle $(\alpha, A^{E-f})$ is \emph{uniformly hyperbolic}, the unstable direction $u(E, \alpha, f, \omega)$ is in $\HH$ and the IDS $k$ can be written as
$$
1 - \frac{1}{\pi} \int_\T \arg u(E, \alpha, f, \omega) \, d\omega.
$$
Here, $\arg u(E, \alpha, f, \omega)$ is well defined since $u(E, \alpha, f, \omega)\in\HH$ for all $E \in \HH$. Then we have the following facts for $u(E, \alpha, f, \omega)$:

\begin{itemize}

\item For any bounded set $\CK \in \HH \times L^{\infty}(\T,\R)$, $\{ u(E, \alpha, f, \omega) : (E, f) \in \CK \}$ is a bounded subset in $\HH$. Here boundedness in $\HH$ is with respect to the hyperbolic metric.

\item For almost every $\omega \in \T$, as a function on $\HH$, $u(\cdot, \alpha_s, f_s, \omega) \to u(\cdot, \alpha, f, \omega)$ in the open compact topology as $s \to \infty$.

\end{itemize}

Now, mimicking the proof of \cite[Lemma 1]{AD}, replacing the Lyapunov exponents $L(E)$ by $k(E)$ and using the expression $k(E) = 1 - \frac{1}{\pi} \int_\T \arg u(E,f,\alpha,\omega) \, d\omega$ for $E \in \HH$, we can show that
$$
\lim_{s \to \infty} \int^{r+2}_{-r-2} |k(E; \alpha_s, f_s) - k(E; \alpha, f)| \, dE = 0.
$$
Thus there exists some subsequence $\{s_l\}_{l \in \Z^+}$ of $s$ such that $k(E; \alpha_{s_l}, f_{s_l}) \to k(E; \alpha, f)$ for almost every $E \in [-r-2, r+2]$ as $l \to \infty$. This completes the proof.
\end{proof}

\begin{lemma}\label{l.4}
Fix arbitrary $\alpha \in \T$ and $r > 0$. Then, for every $\delta > 0$ and $\varepsilon > 0$, the set
$$
\left\{ f \in L^\infty(\T,\R): dk_f(\{ E: L_f(E) < \delta \}) > \varepsilon \right\}
$$
is open in $(B_r(L^\infty(\T,\R)),\|\cdot\|_1)$.
\end{lemma}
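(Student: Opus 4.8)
The plan is to derive openness from two ingredients, each of which I will establish separately: joint upper semicontinuity of $(f,E)\mapsto L_f(E)$, with $f$ ranging over $B_r(L^\infty(\T,\R))$ carrying the $L^1$-topology, and the continuity of the IDS furnished by Lemma~\ref{l.3}. So fix $f_0$ in the set in question (if the set is empty there is nothing to prove), so that $dk_{f_0}(U_0)>\varepsilon$ where $U_0:=\{E:L_{f_0}(E)<\delta\}$; I will exhibit an $L^1$-open neighborhood of $f_0$ inside $B_r(L^\infty(\T,\R))$ that is still contained in the set.

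For the semicontinuity, set $a_n(f,E):=\int_\T\log\|A^{E-f}_n(\omega)\|\,d\omega$. Since every factor $A^{E-f}(\omega+j\alpha)$ has norm at most $|E|+r+2$ when $f\in B_r(L^\infty)$, and every $\mathrm{SL}(2,\R)$ matrix has norm at least $1$, we have $0\le\log\|A^{E-f}_n(\omega)\|\le n\log(|E|+r+2)$. If $(f_s,E_s)\to(f,E)$ in $B_r(L^\infty)\times\R$, then $f_s\to f$ in $L^1$, hence along any subsequence there is a further subsequence with $f_s\to f$ a.e., whence $A^{E_s-f_s}_n(\omega)\to A^{E-f}_n(\omega)$ a.e.; dominated convergence then gives $a_n(f_s,E_s)\to a_n(f,E)$ along that subsequence, and since this applies to every subsequence, $a_n$ is continuous on $B_r(L^\infty)\times\R$. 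Because $n\mapsto a_n(f,E)$ is subadditive (by $A^{E-f}_{m+n}(\omega)=A^{E-f}_n(T^m\omega)A^{E-f}_m(\omega)$ and $T$-invariance of Lebesgue measure), Fekete's lemma yields $L_f(E)=\inf_{n\ge1}\tfrac1n a_n(f,E)$, an infimum of continuous functions; hence $(f,E)\mapsto L_f(E)$ is upper semicontinuous on $B_r(L^\infty)\times\R$. In particular $U_0$ is open and $\{(f,E):L_f(E)<\delta\}$ is open.

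Next I will pass to a convenient subset of $U_0$. Being open, $U_0$ is a disjoint union of open intervals, and since $dk_{f_0}$ is a finite measure with no atoms (the IDS is continuous) there exist finitely many pairwise disjoint closed intervals $J_1=[a_1,b_1],\dots,J_N=[a_N,b_N]$ with $K:=\bigcup_i J_i\subset U_0$ and $dk_{f_0}(K)>\varepsilon$. Now $\{f_0\}\times K$ is a compact subset of the open set $\{(f,E):L_f(E)<\delta\}$, so the tube lemma produces an $L^1$-open neighborhood $V_1\ni f_0$ in $B_r(L^\infty)$ with $L_f(E)<\delta$ whenever $f\in V_1$ and $E\in K$; thus $K\subseteq\{E:L_f(E)<\delta\}$ for every $f\in V_1$. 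On the other hand, absence of atoms gives $dk_f(K)=\sum_{i=1}^N\bigl(k_f(b_i)-k_f(a_i)\bigr)$, and by Lemma~\ref{l.3} (with $\alpha$ fixed) each $k_f(b_i)$ and $k_f(a_i)$ depends continuously on $f\in(B_r(L^\infty),\|\cdot\|_1)$; hence there is an $L^1$-open neighborhood $V_2\ni f_0$ with $dk_f(K)>\varepsilon$ for all $f\in V_2$. For $f\in V_1\cap V_2$ we then get $dk_f(\{E:L_f(E)<\delta\})\ge dk_f(K)>\varepsilon$, so $V_1\cap V_2$ lies in the set, which is therefore open.

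I expect the first step to be the crux. The Lyapunov exponent is only \emph{upper} semicontinuous in $(f,E)$, and it is essential that the set under consideration has the form $\{L<\delta\}$, so that only this one-sided control is needed; the uniform bound on the transfer matrix norms, which requires staying inside the ball $B_r$, is precisely what makes it legitimate to push the semicontinuity through $L^1$-convergence.
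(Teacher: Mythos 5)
Your proof is correct, and it takes a genuinely different route from the paper's, which is worth noting. Both arguments rest on the same two pillars — upper semicontinuity of the Lyapunov exponent in the $L^1$ topology on the ball, and continuity of the IDS from Lemma~\ref{l.3} — but you combine them differently. The paper works with separate upper semicontinuity in $E$ (for fixed $f$) and in $f$ (for fixed $E$), and then introduces the increasing sets $D_n = \{E : L_g(E) < \delta\ \forall g \in B_{1/n}(f)\}$, exhausting $\{E : L_f(E) < \delta\}$ from inside and finally choosing $n$ large. You instead prove \emph{joint} upper semicontinuity of $(f,E)\mapsto L_f(E)$ as an infimum of the continuous functions $\tfrac1n a_n$, use inner regularity and the absence of atoms to replace the open set $\{L_{f_0}<\delta\}$ by a compact union $K$ of finitely many closed intervals still carrying mass $>\varepsilon$, and then apply the tube lemma to $\{f_0\}\times K$. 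This buys two things. First, your compactness step replaces the paper's $D_n$ device, whose Borel measurability (as an intersection over an uncountable family of $g$'s) and whose uniform estimate $dk_g(D_n)>\varepsilon$ over $g$ in a ball the paper leaves somewhat implicit; your $K$ is a plain finite union of intervals, so $dk_f(K)$ is a finite sum of increments $k_f(b_i)-k_f(a_i)$ and the desired continuity in $f$ is immediate from Lemma~\ref{l.3}. Second, your explicit verification of joint upper semicontinuity via dominated convergence and Fekete's lemma makes precise what the paper dispatches with ``it is not hard to see.'' In short, your argument reaches the same conclusion via a compactness/tube-lemma route that is more self-contained and avoids a measurability subtlety lurking in the paper's $D_n$ construction.
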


\begin{proof}
Let $\delta > 0$, $\varepsilon > 0$ and $f$ be such that $dk_f \{ E: L_f(E) < \delta \} > \varepsilon$. Then by Lemma~\ref{l.3} and the equivalence between the uniform convergence of $k(E)$ and weak-$*$ convergence of $dk$, there exists $N_1 \in \Z^+$ such that for any $g \in B_\frac{1}{N_1}(f)$, we have
$$
dk_g (\{ E: L_f(E) < \delta \}) > \varepsilon,
$$
where $B_{a}(f)$ is the open ball around $f$ in $(B_r(L^\infty(\T,\R)),\|\cdot\|_1)$ with radius $a$. Note that here we use the upper semi-continuity of the Lyapunov exponent with respect to $E$ to conclude that $\{ E: L_f(E) < \delta \}$ is open for any $f \in L^\infty$ and $\delta > 0$.

On the other hand, it is not hard to see that for each $E \in \R$, the map
$$
(B_r(L^\infty(\T,\R)),\|\cdot\|_1) \to \R^+ \cup \{0\}, \; f \mapsto L_f(E),
$$
is also upper semi-continuous. Thus for any $E \in \{ E: L_f(E) < \delta \}$, there exists $n \in \Z^+$ such that if $\|g - f\|_1 < \frac{1}{n}$, then $L_g(E) < \delta$. Thus if we set
$$
D_n = \{ E: L_g(E) < \delta,\ \; \forall \, g \in B_{\frac{1}{n}}(f) \},
$$
then clearly $D_n \subset D_{n+1}$, $n \ge 1$ and
$$
\bigcup_{n \ge 1} D_n = \{ E : L_f(E) < \delta \}.
$$
Hence, there exists $N_2 \in \Z^+$ such that for every $n \ge N_2$ and every $g\in B_\frac1{N_1}(f)$
$$
dk_g (D_n) > \varepsilon.
$$
Set $N = \max\{N_1, N_2\}$. Then we have for any $g \in B_\frac{1}{N}(f)$,
$$
dk_g (\{ E: L_g(E) < \delta \}) > \varepsilon,
$$
concluding the proof.
\end{proof}

Now we are ready to show that
\begin{theorem}\label{t.1}
For every $\alpha \not\in \Q$, the set
$$
\left\{ f: dk_f(\{ E : L_f(E) = 0\}) = 1\right\}
$$
is a dense $G_\delta$ in $(C(\T,\R),\|\cdot\|_\infty)$.
\end{theorem}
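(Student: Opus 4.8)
The plan is to separate the statement into a soft $G_\delta$ part and a substantial density part, and then to reduce the density part, via Baire and a smoothing trick, to a statement about finitely valued potentials.

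\emph{The $G_\delta$ part.} Since $\{L_f<1/m\}\downarrow\{L_f=0\}$ and $dk_f$ is a probability measure, one has $dk_f(\{L_f=0\})=1$ exactly when $dk_f(\{L_f<1/m\})>1-1/j$ for all $m,j\in\Z_+$. Hence the set in question equals $\bigcap_{m,j}G_{1/m,1/j}$, where for $\delta,\varepsilon>0$ I write $G_{\delta,\varepsilon}=\{f\in C(\T,\R):dk_f(\{E:L_f(E)<\delta\})>1-\varepsilon\}$. By Lemma~\ref{l.4}, for each $r$ the set $\{h\in B_r(L^\infty(\T,\R)):dk_h(\{L_h<\delta\})>1-\varepsilon\}$ is open for $\|\cdot\|_1$; its trace on $C(\T,\R)$ is therefore open for the finer uniform topology on $B_r\cap C(\T,\R)$, and since every continuous $f$ has a $\|\cdot\|_\infty$-ball contained in some $B_r$, the set $G_{\delta,\varepsilon}$ is $\|\cdot\|_\infty$-open in $C(\T,\R)$. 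So the target set is a $G_\delta$.

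\emph{Reduction of density.} Since $(C(\T,\R),\|\cdot\|_\infty)$ is complete, by the Baire category theorem it suffices to show each $G_{\delta,\varepsilon}$ with $\varepsilon\in(0,1)$ is dense. Fix $f_0\in C(\T,\R)$ and $\varepsilon_0>0$. I claim it is enough to find a \emph{step function} $g$ — constant on the pieces of a finite partition of $\T$ into half-open intervals — with $\|g-f_0\|_\infty<\varepsilon_0/4$ (possible by uniform continuity of $f_0$; such a $g$ then has all jumps of size $<\varepsilon_0/2$) and with $dk_g(\{L_g<\delta\})>1-\varepsilon$. Indeed, fix $r>\|g\|_\infty$; by Lemma~\ref{l.4} the $\|\cdot\|_1$-neighbourhood $\{h\in B_r:\|h-g\|_1<\rho\}$ lies in the set $\{h:dk_h(\{L_h<\delta\})>1-\varepsilon\}$ for some $\rho>0$. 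Let $f$ be the continuous function obtained from $g$ by replacing it, on a small interval of length $\lambda$ around each of its finitely many discontinuities, by the affine interpolant. Then $\|f-g\|_\infty\le\max(\text{jump of }g)<\varepsilon_0/2$, so $\|f-f_0\|_\infty<\varepsilon_0$, while $\|f\|_\infty\le\|g\|_\infty<r$ and $\|f-g\|_1\le 2\|g\|_\infty\cdot(\#\text{jumps})\cdot\lambda\to0$ as $\lambda\to0$. Thus for small $\lambda$ we get $f\in C(\T,\R)\cap G_{\delta,\varepsilon}$ with $\|f-f_0\|_\infty<\varepsilon_0$, so $G_{\delta,\varepsilon}$ is dense. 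The problem therefore reduces to: for every $\delta>0$, $\varepsilon\in(0,1)$, $f_0\in C(\T,\R)$, $\varepsilon_0>0$, there is a step function as above.

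\emph{The core step and the main obstacle.} It remains to put a $dk$-almost-full part of the density of states of a step-function potential inside $\{L<\delta\}$. The plan is to use rational approximation of the fixed irrational $\alpha$: pick a continued-fraction convergent $p/q$ of $\alpha$ and choose the step function $g$ approximating $f_0$ with its breakpoints and plateaux adapted to the scale $1/q$, so that for $\omega\in\T$ the potential $n\mapsto g(\omega+n\alpha)$ agrees with the $q$-periodic $n\mapsto g(\omega+np/q)$ except on a sparse set of $n$. The periodic operator $H_{g,p/q}$ has its density of states supported on its spectrum, where $L_{g,p/q}\equiv 0$ (hence $<\delta$ on a neighbourhood), and by the continuity of $(\alpha,f)\mapsto k$ (Lemma~\ref{l.3}, which covers rational frequencies) $dk_{g,\alpha}$ is weak-$*$ close to $dk_{g,p/q}$; one then wants to deduce $L_{g,\alpha}(E)<\delta$ on a set of $dk_{g,\alpha}$-measure $>1-\varepsilon$. \textbf{I expect this transfer to be the hard part.} The Lyapunov exponent is only upper semicontinuous, and genuinely discontinuous, in the frequency, so a bound on $L_{g,p/q}$ gives no free bound on $L_{g,\alpha}$; one has to choose $q$ (equivalently the fineness of $g$) large in terms of $|\alpha-p/q|$, $\delta$, $\varepsilon$, and control the growth of $\|A^{E-g}_n(\omega)\|$ over the (long, when the approximation is good) time window on which periodicity is effectively preserved, so as to force the desired bound on $L_{g,\alpha}$. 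Note that the subshift machinery of Theorem~\ref{t.4} does not suffice here: it produces aperiodic step-function potentials with \emph{singular} $dk$, but — as the Fibonacci potential shows, where $L>0$ throughout the spectrum — singularity alone does not place $dk$ inside $\{L<\delta\}$. Once the core step is in hand, the three reductions assemble to the theorem; and since for such aperiodic (non-constant) step functions $\{L_f=0\}$ has zero Lebesgue measure by Proposition~\ref{p.kotani}, the resulting generic identity $dk_f(\{L_f=0\})=1$ in particular re-proves generic singularity of $dk_f$.
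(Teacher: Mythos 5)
The soft half of your argument agrees with the paper: the $G_\delta$ property follows from Lemma~\ref{l.4}, and the smoothing trick (affine interpolation on short intervals around the jumps, with an $L^1$-small correction) is the same mechanism the paper invokes via the proof of \cite[Lemma~3]{AD}. But the ``core step'' — producing a step function $g$ near $f_0$ with $dk_g(\{L_g<\delta\})>1-\varepsilon$ — is exactly where you stop, and it is precisely the non-trivial content of the density claim. You propose to attack it by tying the breakpoints of $g$ to a rational convergent $p/q$ of $\alpha$ and transferring information from the $q$-periodic model, and you correctly observe that the discontinuity of $L$ in the frequency makes this delicate; but you do not carry the transfer out, so the proof as written has a genuine hole.

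The paper closes that hole by a much more direct route that you don't have available: approximate $f_0$ in $\|\cdot\|_\infty$ by a step function $s$ whose discontinuity points are \emph{rational}. For irrational $\alpha$, the potentials $n\mapsto s(\omega+n\alpha)$ then live in a low-complexity subshift satisfying the Boshernitzan condition, and \cite[Theorem~10]{DL2} together with Kotani's theorem (Proposition~\ref{p.kotani}) gives the identity $\Sigma_s=\{E:L_s(E)=0\}$ with $\mathrm{Leb}(\Sigma_s)=0$. Thus $dk_s$ is \emph{entirely} concentrated on $\{L_s=0\}\subset\{L_s<\delta\}$ — no rational approximation of $\alpha$, no Diophantine control, no quantitative transfer of Lyapunov bounds across frequency is needed. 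From there one smooths $s$ to a continuous $f_2$ and uses Lemma~\ref{l.4}, exactly as you set up. Note also that your remark about the Fibonacci potential is backwards: for the Fibonacci Hamiltonian (and, by [DL2], for all these low-complexity step-function models) the Lyapunov exponent \emph{vanishes} identically on the spectrum; that is the very fact that makes the paper's density step work, not a counterexample to it. Singularity of $dk$ alone is indeed not enough to place $dk$ inside $\{L<\delta\}$ (Anderson localization furnishes counterexamples), but the subshift machinery you dismiss is exactly what supplies the stronger statement $L\equiv 0$ on $\Sigma_s$ here.
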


\begin{proof}
Clearly, Lemma~\ref{l.4} implies that for any $n \in \Z^+$, the set
$$
D_n := \left\{ f \in C(\T,\R): dk_f \left( \left\{ E: L_f(E) < \tfrac{1}{n} \right\} \right) > 1 - \tfrac{1}{n} \right\}
$$
is open in $(C(\T,\R), \|\cdot\|_\infty)$. Thus to prove Theorem~\ref{t.1}, it suffices to show that
$$
\left\{ f \in C(\T,\R): dk_f(\{ E: L_f(E) = 0 \}) = 1 \right\}
$$
is dense in $(C(\T,\R), \|\cdot\|_\infty)$.

Fix arbitrary $f = f_1\in C(\T,\R)$ and $\delta = \delta_1 > 0$. We can pick a step function $s : \T \to \R$ which jumps only at rational numbers and obeys $\|s - f\|_\infty < \frac{\delta_1}{4}$. Since $\alpha$ is irrational, Kotani's Theorem and \cite[Theorem 10]{DL2} imply that $\Sigma_s = \{E : L_s(E) = 0\}$, which is of Lebesgue measure zero. Thus $dk_s$ is concentrated on a set of Lebesgue measure zero. By Lemma~\ref{l.4} and the proof of \cite[Lemma 3]{AD}, we can find $f_2 \in C(\T,\R)$ such that $\|f_2 - s\|_\infty < \frac{\delta_1}{4}$ and $f_2 \in D_2$. Hence
$$
f_2 \in B_{\frac{\delta_1}{2}}(f) \cap D_2,
$$
where $B_a(f)$ denotes the open ball around $f$ in $(C(\T,\R),\|\cdot\|_\infty)$ with radius $a$. Let $\overline B_a(f)$ be the closure of $B_a(f)$. Clearly we can find a $\delta_2 < \frac{\delta_1}{2}$ such that
$$
\overline B_{\delta_2}(f_2) \subset B_{\delta_1}(f_1) \cap D_2.
$$

Now by the same procedure as above and by induction, we can find a sequence of functions $\{f_k\}_{k \ge 1}$ and a sequence of positive numbers $\{\delta_k\}_{k \ge 1}$ such that
$$
\overline B_{\delta_{k+1}} (f_{k+1}) \subset B_{\delta_k} (f_k) \cap D_{k+1}
$$
with $\delta_k < \frac{\delta_{k-1}}{2}$.

Thus there is an $f_\infty \in C(\T,\R)$ such that
$$
\lim_{k \to \infty}f_k = f_\infty \in B_\delta(f).
$$
Since $\{f_k\}_{k \ge n}$ is contained in a ball whose closure is contained in $D_n$, we have
$$
f_\infty \in D_k, \quad \forall k \ge 1.
$$
Thus,
$$
dk_{f_\infty} (\{E : L_{f_\infty}(E) = 0\}) = 1.
$$
Now for each $n \in \Z^+$, $D_n$ is open and dense. Hence, we have

$$
\left\{ f: dk_f(\{ f \in C(\T,\R) : L_f(E) = 0\}) = 1 \right\} = \bigcap_{n \ge 1} D_n
$$
is a dense $G_\delta$ in $(C(\T,\R),\|\cdot\|_\infty)$.
\end{proof}

\begin{corollary}\label{c.1}
For every $\alpha \not\in \Q$, the set
$$
\{ f \in C(\T,\R) : dk \text{ is singular} \}
$$
is a dense $G_\delta$ subset of $(C(\T,\R),\|\cdot\|_\infty)$.
\end{corollary}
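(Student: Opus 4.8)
The plan is to establish the two defining properties of a dense $G_\delta$ separately: the $G_\delta$ property from the continuity of the IDS, and density from Theorem~\ref{t.1} together with the genericity of absence of absolutely continuous spectrum proved in \cite{AD}. For the $G_\delta$ property I would first record the elementary characterization that a Borel probability measure $\nu$ on $\R$ is purely singular if and only if for every $n\in\Z_+$ there is an open set $U\subset\R$ with $\mathrm{Leb}(U)<\tfrac1n$ and $\nu(U)>1-\tfrac1n$; the nontrivial implication follows by choosing such open sets whose Lebesgue measures form a summable sequence and intersecting their tails, which produces a Lebesgue-null set of full $\nu$-mass. Applied to $\nu=dk_f$, this reduces the $G_\delta$ claim to the openness of $\{f\in C(\T,\R):dk_f(U)>c\}$ for each fixed open $U$ and each $c\in\R$. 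That in turn follows from Lemma~\ref{l.3}: uniform convergence $f_j\to f$ forces uniform convergence of the associated IDS, hence weak-$*$ convergence $dk_{f_j}\to dk_f$, and since $\nu\mapsto\nu(U)$ is lower semicontinuous for weak-$*$ convergence on an open set $U$ (Portmanteau theorem), the composition $f\mapsto dk_f(U)$ is lower semicontinuous. Therefore
$$
\{\, f : dk_f \text{ is singular}\,\}\;=\;\bigcap_{n\ge1}\ \bigcup_{\substack{U\ \mathrm{open}\\ \mathrm{Leb}(U)<1/n}}\{\, f : dk_f(U)>1-\tfrac1n\,\}
$$
is a countable intersection of open sets.

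For density, recall that by Kotani theory (see Proposition~\ref{p.kotani} and \cite{K2}) the operators $H_\omega$ have empty absolutely continuous spectrum for a.e.\ $\omega\in\T$ exactly when $\mathrm{Leb}(\{E:L_f(E)=0\})=0$, and the main result of \cite{AD} asserts that, for our fixed irrational $\alpha$, this holds for all $f$ in a dense $G_\delta$ set $\CG_1\subset C(\T,\R)$. On the other hand, Theorem~\ref{t.1} provides a dense $G_\delta$ set $\CG_2\subset C(\T,\R)$ on which $dk_f(\{E:L_f(E)=0\})=1$. By the Baire category theorem $\CG_1\cap\CG_2$ is again a dense $G_\delta$; and for every $f\in\CG_1\cap\CG_2$ the probability measure $dk_f$ gives full mass to the Lebesgue-null set $\{E:L_f(E)=0\}$, hence is purely singular. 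Thus $\{f:dk_f\text{ is singular}\}$ contains the dense set $\CG_1\cap\CG_2$, and since it is $G_\delta$ by the first step, it is a dense $G_\delta$. (Alternatively one can bypass Theorem~\ref{t.1} here, using instead the Kotani fact that the restriction of $dk_f$ to $\{E:L_f(E)>0\}$ is always purely singular, which already yields $\CG_1\subset\{f:dk_f\text{ singular}\}$.)

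The $G_\delta$ step is routine once Lemma~\ref{l.3} is available, and the formal deduction above is short; the genuine difficulty is entirely imported through \cite{AD}. A self-contained proof of density would have to re-run the inductive construction used in the proof of Theorem~\ref{t.1}: one starts from step functions $s$ with rational jump points, for which $\Sigma_s=\{E:L_s(E)=0\}$ has zero Lebesgue measure by Proposition~\ref{p.kotani} and \cite[Theorem~10]{DL2}, perturbs them within a nested sequence of balls to a continuous limit $f_\infty$, and — this is the crux — must in addition keep $\mathrm{Leb}(\{E:L_{f_k}(E)=0\})$ under control on the relevant compact energy window at each stage (using \cite[Lemma~3]{AD} and upper semicontinuity of $(f,E)\mapsto L_f(E)$) so that the zero-Lyapunov set of the continuous limit still has Lebesgue measure zero. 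Controlling this set under the passage from a discontinuous potential to a nearby continuous one is exactly the hard part, and it is precisely what \cite{AD} supplies.
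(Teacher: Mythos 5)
Your density argument is identical in substance to the paper's: you intersect the dense $G_\delta$ from \cite[Theorem~1]{AD} (where $L_f(E)>0$ for Lebesgue-a.e.\ $E$ in the spectrum, so that $\mathrm{Leb}(\{E:L_f(E)=0\})=0$) with the dense $G_\delta$ from Theorem~\ref{t.1} (where $dk_f$ gives full mass to $\{E:L_f(E)=0\}$); on the intersection the density of states measure is carried by a Lebesgue-null set, hence singular. Where you go beyond the paper is in the first step: the paper's proof, taken literally, only exhibits a dense $G_\delta$ set \emph{contained in} $\{f:dk_f\text{ singular}\}$, but does not verify that the set itself is $G_\delta$, which the corollary as stated asserts. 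Your argument closes that gap cleanly: the open-cover characterization of purely singular measures (for every $n$ there is an open $U$ with $\mathrm{Leb}(U)<1/n$ and $\nu(U)>1-1/n$, with the converse obtained by intersecting tails of a summable sequence of such $U$'s), together with Lemma~\ref{l.3} and the Portmanteau theorem giving lower semicontinuity of $f\mapsto dk_f(U)$ on open sets, exhibits $\{f:dk_f\text{ singular}\}$ as a countable intersection of open sets. Your parenthetical alternative for density is also correct and in fact shorter: once $\mathrm{Leb}(\{E:L_f(E)=0\})=0$, the Ishii--Pastur--Kotani theorem already forces the absolutely continuous parts of all spectral measures $\mu_\omega$ (and hence of their average $dk_f$) to vanish, so Theorem~\ref{t.1} is not strictly needed for this corollary. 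In short, the proposal is correct, follows the paper's main line for density, and strengthens the $G_\delta$ part.
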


\begin{proof}
In our setting we may apply \cite[Theorem 1]{AD}, which says that there is a dense $G_\delta$ set in $(C(\T,\R),\|\cdot\|_\infty)$ such that the corresponding Lyapunov exponents are positive for almost every $E$ in the spectrum. Combined with Theorem~\ref{t.1}, this implies Corollary~\ref{c.1}.
\end{proof}

We remark that using the singularity result for minimal shifts on higher-dimensional tori with Diophantine shift vector and rectangular finite grids on the torus (Corollary~\ref{c.qpsingdos}, presented in Subsection~\ref{subsec.3.5}) in place of \cite{DL2}, we have the analogous singularity result for generic continuous functions for such Diophantine multi-frequency models.

\bigskip

Theorem~\ref{t.2} below shows that in our context, having zero measure spectrum is strictly stronger than having singular density of states measure. Let us start with the following lemma.

\begin{lemma}\label{l.5}
Given $\alpha \in \Q$ and $f \in C(\T,\R)$, fix a non-degenerate interval $I \subset \Sigma_{\alpha, f}$. Then, for every $\varepsilon > 0$, there exists $\delta > 0$ such that for every $\beta \in B_\delta(\alpha) \cap \Q$, $g \in B_\delta (f)$, and $r > 0$, we can modify $g$ on a collection of intervals, $J_i \subset \T$, $1 \le i \le k$, which satisfy $\beta + \bigcup J_i = \bigcup J_i$ and
$$
\sum^k_{i=1} |J_i| < r,
$$
to $h \in C(\T,\R)$ such that
$$
\|g - h\|_\infty \le \varepsilon \ \mathrm{and}\ I \subset \Sigma_{\beta, h}.
$$
\end{lemma}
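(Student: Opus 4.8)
The plan rests on three observations: that it suffices to make $\Sigma_{\beta,h}$ contain a merely \emph{dense} subset of $I$; that $\Sigma_{\beta,g}$ is already approximately dense in $I$, with error tending to $0$; and that, since $\beta$ is a rational distinct from $\alpha$, its denominator is forced to be large, so on a short $\beta$-invariant set one may freely reparametrise the $\beta$-sample of $g$ and add a small constant, which translates the spectrum.

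\textbf{Reduction.} Write $\beta=p'/q'$ in lowest terms. By Proposition~\ref{p.johnson} (equivalently Lemma~\ref{l.1}) the set $\Sigma_{\beta,h}$ is closed, so, since $I$ is a non-degenerate interval, it is enough to construct $h$ with $\Sigma_{\beta,h}$ containing a set dense in $I$; taking closures then gives $I\subseteq\Sigma_{\beta,h}$. Next, since $\alpha=p/q\ne\beta$ are both in lowest terms, $|\beta-\alpha|\ge 1/(qq')$, hence $q'\ge 1/(q|\beta-\alpha|)>1/(q\delta)$, so $q'$ is large once $\delta$ is small. Finally, by Lemma~\ref{l.2}, once $\delta$ is small $\Sigma_{\beta,g}$ is within $\rho$ of $\Sigma_{\alpha,f}$ (each contained in the $\rho$-neighbourhood of the other) with $\rho=\rho(\delta)\to0$; since $I\subseteq\Sigma_{\alpha,f}$, this yields $I\subseteq\Sigma_{\beta,g}+[-\rho,\rho]$.

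\textbf{Construction.} Fix a short interval $J$ with $|J|<r/q'$ and set $\Omega_J=\bigcup_{j=0}^{q'-1}\bigl(J+\tfrac{j}{q'}\bigr)$; as $p'$ is invertible mod $q'$, one has $\beta+\Omega_J=\Omega_J$ and $\mathrm{Leb}(\Omega_J)<r$. The key point is that for every $\omega\in J$ the entire $\beta$-orbit of $\omega$ lies in $\Omega_J$, with exactly one orbit point in each translate $J+\tfrac{j}{q'}$; hence, by prescribing $h$ on $\Omega_J$, we may make $V^h_\omega$ equal, for each $\omega\in J$, to any period-$q'$ potential of the form $V^g_{\psi(\omega)}+t(\omega)\mathbf 1$ — the $\beta$-sample of $g$ at a reparametrised phase $\psi(\omega)$, shifted by a constant $t(\omega)$ — provided $h$ is continuous, which is arranged by interpolating back to $g$ near the endpoints of each $J+\tfrac{j}{q'}$ (this forces $\psi$ and $t$ to take their $g$-values at $\partial J$). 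Choose $\psi\colon J\to[0,1/q')$ onto (it suffices that $\psi$ sweeps a full cyclic period of $\psi\mapsto\sigma(H^g_{\beta,\psi})$) and $t\colon J\to[-\rho,\rho]$ onto, with the combined map $\omega\mapsto(\psi(\omega),t(\omega))$ having dense image in $[0,1/q')\times[-\rho,\rho]$ — a standard space-filling choice. Then $h=g$ off $\Omega_J$, and on $\Omega_J$ one has $|h-g|\le\sup\{|g(x)-g(y)|:|x-y|\le 1/q'\}+\rho$, which is $\le\varepsilon$ once $\delta$ is small, since $1/q'$ is small and $g$ is uniformly continuous and close to $f$.

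\textbf{Conclusion and the delicate point.} Since adding a constant shifts the spectrum, $\sigma(H^h_\omega)=\sigma(H^g_{\beta,\psi(\omega)})+t(\omega)$ for $\omega\in J$; and $\psi\mapsto\sigma(H^g_{\beta,\psi})$ is Hausdorff-continuous and depends only on $\psi$ mod $1/q'$, so density of the image of $(\psi,t)$ gives that $\bigcup_{\omega\in J}\sigma(H^h_\omega)$ is dense in $\bigcup_{(\psi,t)}\bigl(\sigma(H^g_{\beta,\psi})+t\bigr)=\Sigma_{\beta,g}+[-\rho,\rho]\supseteq I$. Hence $I\subseteq\overline{\bigcup_{\omega\in J}\sigma(H^h_\omega)}\subseteq\Sigma_{\beta,h}$, with $h$ of the required form. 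The argument is fairly clean; the only place needing genuine care is the bookkeeping in the Construction step — realising all the prescribed potentials $V^h_\omega$ by a \emph{single} continuous $h$ on $\Omega_J$, with $\psi$ space-filling yet matching the endpoint values, while keeping $\|h-g\|_\infty\le\varepsilon$. That last bound is exactly what exploits that $\beta\ne\alpha$ being rational forces $q'$, hence the $1/q'$-windows over which we reparametrise, to be small; everything else (the Floquet characterisation, the continuity statement of Lemma~\ref{l.2}, and the $\varepsilon$- and measure-budgets) is routine.
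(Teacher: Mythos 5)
Your argument takes a genuinely different route from the paper's. The paper's proof closes the (small, size $<\varepsilon$) spectral gaps $G_i$ of $\Sigma_{\beta,g}$ inside $I$ one at a time: it locates a phase $\omega_i$ whose band abuts $G_i$, places a tent-function bump of height $\varepsilon$ on short intervals around the $\beta$-orbit of $\omega_i$, and thereby sweeps that band linearly across $G_i$, closing it. Your construction is more global: you reparametrise the $\beta$-sample of $g$ on a short $\beta$-invariant set by a space-filling pair $(\psi,t)$ plus a constant shift, obtaining $\sigma(H^h_\omega)=\sigma(H^g_{\beta,\psi(\omega)})+t(\omega)$ whose union is dense in $\Sigma_{\beta,g}+[-\rho,\rho]\supseteq I$, and then you take closures. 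The bookkeeping you flag (realising all these prescriptions by one continuous $h$ matching $g$ at $\partial\Omega_J$, and the bound $\|h-g\|_\infty\lesssim\operatorname{osc}_{1/q'}(g)+\rho$) does go through.

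There is, however, a real gap: the lemma's quantifier is over all $\beta\in B_\delta(\alpha)\cap\Q$, which includes $\beta=\alpha$, whereas you explicitly assume $\beta\neq\alpha$. Your whole strategy hinges on the fact that $\beta\neq\alpha$ forces the denominator $q'$ of $\beta$ to satisfy $q'\gtrsim 1/\delta$, so that the $1/q'$-windows over which you reparametrise are short and $\operatorname{osc}_{1/q'}(g)\le\varepsilon/2$ once $\delta$ is small. When $\beta=\alpha$, $q'=q$ is fixed, $1/q'$ is not small, and your perturbation $h-g$ is bounded only by the oscillation of $g$ at scale $1/q$, which has nothing to do with $\varepsilon$; the estimate $\|g-h\|_\infty\le\varepsilon$ fails. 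The paper's tent-function construction never needs $q'$ large — the perturbation is built to have height at most $\varepsilon$ directly — so it covers $\beta=\alpha$ without extra effort. The missing case is not actually used in Theorem~\ref{t.2} (there $\alpha_2\ne\alpha_1$), but to prove the lemma as stated you would need to supply a separate argument (for instance the paper's) when $\beta=\alpha$, or explicitly restrict the statement to $\beta\ne\alpha$.
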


\begin{proof}
Assume for simplicity that $\varepsilon$ is much smaller than $|I|$. By Lemma~\ref{l.2}, there exists $\delta > 0$ such that for all $\beta \in B_\delta(\alpha)$ and $g \in B_\delta(f)$, we have
$$
d(\Sigma_{\alpha, f}, \Sigma_{\beta, g}) < \frac{\varepsilon}{2}.
$$
Assume $\beta = \frac{p}{q}$ is rational. Let us list all the gaps of $\Sigma_{\beta, g}$ in $I$ as $G_i$, $1 \le i \le l$. Then each of them is of size smaller than $\varepsilon$. For simplicity, we assume that they lie entirely in $I$. Other cases can be treated similarly. Let $\T \ni \omega_i$, $1 \le i \le l$ be such that for each $i$, one band of $\Sigma_{\omega_i}$ lies in the left end of $G_i$. For simplicity, we assume all $\omega_i$, $1 \le i \le l$ are distinct. Again, other cases can be treated similarly. Then we can take small intervals $J_i + n \frac{p}{q} \subset \T$ centered around $\omega_i + n \frac{p}{q}$, $0 \le n \le q-1$, mutually disjoint in $\T$, so that for the given $r > 0$, we have
$$
\sum_{i=1}^{l} q |J_i| < r.
$$
Let
$$
\varphi(x) = \begin{cases} 1+x & {\rm if} \ -1 \le x \le 0, \\ 1-x & {\rm if}\ 0 < x \le 1, \\ 0 & {\rm otherwise}. \end{cases}
$$
Now if we define $h$ as
$$
h(\omega) = g(\omega) + \sum^{l}_{i=1} \sum^{q-1}_{n=0} \varepsilon \varphi \left[ \frac{1}{|J_i|} (\omega - \omega_i - n \frac{p}{q}) \right],
$$
then it is clear that $\|g-h\|_\infty \le \varepsilon$. On the other hand, for $\omega\in J_i$, let $D_{g,\omega}$ denote the band of $\Sigma_{g,\omega}$ lying to the left of the gap $G_i$ and such that $d(D_{g,\omega},G_i)$ is minimal among all the bands lying to the left of $G_i$.  Let $d_i$ denote the right boundary of $D_{g,\omega_i}$. Thus $d_i$ is also the left boundary of $G_i$. We assume $J_i$ is so small that $\bigcup_{\omega\in J_i}D_{g,\omega}$ is a connected interval. By our choice of $h$, it is not difficult to see that for each $\omega \in J_i$,
$$
D_{h,\omega} = D_{g,\omega} + \frac{|J_i| - |\omega - \omega_i|}{|J_i|} \epsilon.
$$
Thus we have
$$
\bigcup_{\omega\in J_i}D_{h,\omega}=\left(\bigcup_{\omega\in J_i}D_{g,\omega}\right)\cup[d_i,d_i+\varepsilon].
$$
In fact, there is a similar extension of all bands of $\bigcup_{\omega\in J_i}\Sigma_{g,\omega}$. Thus, passing from $g$ to $h$, $G_i$ closes up and there is no new gap. Similarly, all gaps in $I$ close up. Thus we have
$$
I \subset \Sigma_{\beta,h},
$$
which implies that $h$ is exactly what we want.
\end{proof}

Note that in Lemma~\ref{l.5}, $dk_{\beta,g}$ and $dk_{\beta,h}$ are close in the weak-$*$ topology if we choose $r$ small. This is because $dk = \int_\T dk_{\omega} \, d\omega$ and the perturbation only occurs on intervals $\CI := \bigcup^{l}_{i=1} \bigcup^{q-1}_{n=0} (J_i + n\frac{p}{q})$, whose size is bounded by $r$ and which is invariant under shift by $\beta$. Here $dk_\omega$ is the spectral measure of $H_\omega$ and $\delta_0$. Indeed, for any real valued continuous function vanishing at infinity, say $\psi \in C_0(\R)$, we have
\begin{align*}
\left| \int_\R \psi \, dk_{\beta,g} - \int_\R \psi \, dk_{\beta,h} \right| & = \left| \int_\T \left( \int_\R \psi \, dk_{g,\omega} - \int_\R \psi \, dk_{h,\omega} \right) \, d\omega \right| \\
& = \left| \int_\CI \left( \int_\R \psi \, dk_{g,\omega} - \int_\CI \psi \, dk_{h,\omega} \right) \, d\omega \right| \\
& \le \int_\CI \left| \int_\R \psi \, dk_{g,\omega} - \int_\R \psi \, dk_{h,\omega} \right| d\omega \\
& \le \int_\CI 2\|\psi \|_\infty \, d\omega \\
& \le 2r \, \|\psi \|_\infty,
\end{align*}
which, choosing $r$ suitably, can be made arbitrarily small.

Now we are ready to show the following theorem:

\begin{theorem}\label{t.2}
There is dense subset $\mathcal D \subset \T \times C(\T,\R)$ such that for each $(\alpha, f) \in \mathcal D$, $dk_{\alpha, f}$ is singular and $\Sigma_{\alpha,f}$ contains an interval. In particular, all such $\alpha$'s are irrational.
\end{theorem}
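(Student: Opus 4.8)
The plan is to prove the statement by an approximation argument: given an arbitrary $(\alpha_0,f_0)\in\T\times C(\T,\R)$ and $\rho>0$, we will construct a pair $(\alpha,f)$ within $\rho$ of $(\alpha_0,f_0)$ such that $dk_{\alpha,f}$ is singular and $\Sigma_{\alpha,f}$ contains a non-degenerate interval; running this over a countable dense set of centres and a sequence $\rho\downarrow0$ then produces the dense set $\mathcal D$. The pair will be obtained as a limit $(\alpha,f)=\lim_k(\alpha_k,f_k)$, where each $\alpha_k\in\Q$, each $f_k\in C(\T,\R)$ equals a fixed step function $s$ with rational discontinuities away from a small ``reserved'' set invariant under rotation by $\alpha_k$, $(\alpha_{k+1},f_{k+1})$ lies in a ball of radius $\delta_k$ about $(\alpha_k,f_k)$ with $\sum_k\delta_k$ small, and $\alpha:=\lim_k\alpha_k$ is arranged to be irrational (nesting the frequencies so that $\alpha\in\bigcap_kJ_k$ for a decreasing sequence of intervals $J_k$ avoiding rationals). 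Fixing once and for all one non-degenerate interval $I$, the induction maintains two invariants: (A) $I\subseteq\Sigma_{\alpha_k,f_k}$ for every $k$; and (B) there is an open set $U_k$, a finite union of intervals, with $\mathrm{Leb}(U_k)<2^{-k}$ and $dk_{\alpha_k,f_k}(U_k)>1-2^{-k}$.

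For invariant (B) the starting point is, as in the proof of Theorem~\ref{t.1}, a step function $s$ with rational discontinuities and $\|s-f_0\|_\infty$ small for which, by Kotani's theorem (Proposition~\ref{p.kotani}) together with \cite[Theorem~10]{DL2}, $\Sigma_{\beta,s}$ has zero Lebesgue measure for every irrational $\beta$; consequently, choosing $\alpha_k$ sufficiently close to the target irrational (this is what the intervals $J_k$ encode), one has $\mathrm{Leb}(\Sigma_{\alpha_k,s})<2^{-(k+1)}$. Since $dk_{\alpha_k,s}$ is a probability measure supported on $\Sigma_{\alpha_k,s}$, it only remains to see that replacing $s$ by $f_k$ — which differs from $s$ only on a union $\CI$ of finitely many small reserved sets — barely moves the density of states. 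The key point is that, at the rational frequency $\alpha_k$, the operator $H_{\alpha_k,f_k,\omega}$ differs from $H_{\alpha_k,s,\omega}$ only in diagonal entries at the sites where the ($q_k$-periodic) orbit of $\omega$ meets $\CI$, a finite-rank perturbation per period; hence, by eigenvalue interlacing, the integrated densities of states of $(\alpha_k,f_k)$ and $(\alpha_k,s)$ differ uniformly by $O(\mathrm{Leb}(\CI))$, with no factor of $q_k$ (this is what makes the iteration feasible, since the naive weak-$*$ estimate following Lemma~\ref{l.5} would lose such a factor when the perturbation region is invariant only under an \emph{earlier} frequency). Taking $U_k$ to be $\Sigma_{\alpha_k,s}$ slightly fattened, this yields $dk_{\alpha_k,f_k}(U_k)\ge1-O(\mathrm{Leb}(\CI))$, so (B) holds provided $\mathrm{Leb}(\CI)$ is kept suitably small at stage $k$.

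Invariant (A) is maintained by Lemma~\ref{l.5}: at stage $k+1$ our function is still unconstrained on the reserved set, and since passing from $\alpha_k$ to a nearby rational $\alpha_{k+1}$ changes the spectrum only slightly (Lemma~\ref{l.2}), all gaps of the current spectrum lying in $I$ are short, so Lemma~\ref{l.5} produces $f_{k+1}$, agreeing with the previous function off a rotation-by-$\alpha_{k+1}$-invariant set of arbitrarily small measure and differing from it by an arbitrarily small amount in sup norm, with $I\subseteq\Sigma_{\alpha_{k+1},f_{k+1}}$; by the weak-$*$ estimate recorded after Lemma~\ref{l.5} this last adjustment changes $dk$ by as little as we wish, so (B) survives. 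To pass to the limit, note $(\alpha_k,f_k)\to(\alpha,f)$ with $\alpha$ irrational; by Lemma~\ref{l.2}, $\Sigma_{\alpha_k,f_k}\to\Sigma_{\alpha,f}$ in the Caratheodory metric, and since $I\subseteq\Sigma_{\alpha_k,f_k}$ for all $k$ and $\Sigma_{\alpha,f}$ is closed, $I\subseteq\Sigma_{\alpha,f}$; thus $\Sigma_{\alpha,f}$ contains an interval. By Lemma~\ref{l.3}, $dk_{\alpha_k,f_k}\to dk_{\alpha,f}$ weakly; setting $Z=\bigcap_{N\ge1}\overline{\bigcup_{k\ge N}U_k}$, which is Lebesgue-null because the $U_k$ are finite unions of intervals with $\sum_k\mathrm{Leb}(U_k)<\infty$, the portmanteau inequality applied to (B) gives $dk_{\alpha,f}(\overline{\bigcup_{k\ge N}U_k})\ge\limsup_{j\ge N}dk_{\alpha_j,f_j}(U_j)\ge1-2^{-N}$ for every $N$, hence $dk_{\alpha,f}(Z)=1$ and $dk_{\alpha,f}$ is singular. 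That such $\alpha$ must be irrational is automatic: for rational frequency and continuous sampling function the operators are periodic, so $dk$ is absolutely continuous.

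The main obstacle is reconciling the two invariants quantitatively. Invariant (B) forces each stage's perturbation of $s$ to be supported on a very small set, and crucially on a set invariant under the \emph{current} rational frequency so that the finite-rank-per-period estimate applies without a denominator factor; invariant (A), however, requires the gap-closing perturbations of Lemma~\ref{l.5} to be repeated at every stage, with sizes small enough that $f_k$ converges uniformly — which in turn forces one to discard the gap-closing perturbations from much earlier stages once the spectrum has drifted and they are obsolete, and this discarding must cost only $o(1)$ in sup norm, hence the gaps reopened in $I$ must tend to zero, hence the frequency must be moved very slowly. Choosing the step function $s$, the target irrational and the intervals $J_k$, the denominators of the $\alpha_k$, the radii $\delta_k$, the measures of the reserved sets, and the (summable) sizes of the successive Lemma~\ref{l.5} adjustments in a mutually compatible order — so that at every stage the hypotheses of Lemma~\ref{l.5} are verified while both (A) and (B) are restored — is the technical heart of the argument.
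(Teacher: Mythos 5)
Your high-level plan --- nesting rational frequencies $\alpha_k$, maintaining $I\subseteq\Sigma_{\alpha_k,f_k}$ via Lemma~\ref{l.5}, and maintaining a shrinking-support density of states through the Baire-type sets $D_n$, then passing to an irrational limit via Lemmas~\ref{l.2} and \ref{l.3} --- has the same skeleton as the paper's argument. But there is a genuine gap in how you propose to maintain your invariant (B), and it is exactly at the step where your approach diverges from the paper's.

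The paper never compares $f_k$ with a fixed step function $s$ across stages. At each stage it invokes Corollary~\ref{c.1} to find a \emph{fresh} nearby pair $(\alpha',f')$ whose density of states is already singular (hence in every $D_n$), passes to a nearby rational $\alpha_{k+1}$ using the openness of $D_{k+1}$ (Lemma~\ref{l.3}), and then applies Lemma~\ref{l.5} once, producing a perturbation supported on an $\alpha_{k+1}$-\emph{invariant} set of measure $<r$. The discussion after Lemma~\ref{l.5} then gives the estimate
$$
\left|\int\psi\,dk_{\alpha_{k+1},f'}-\int\psi\,dk_{\alpha_{k+1},f_{k+1}}\right|\le 2r\,\|\psi\|_\infty\quad\text{for all }\psi\in C_0(\R),
$$
which is a \emph{total variation} bound. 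This is what allows membership in the open set $D_{k+1}$ to survive the gap-closing perturbation, regardless of how many components the near-optimal set $\CB$ in the definition of $D_{k+1}$ has. The invariance of the modification set under $+\alpha_{k+1}$ is what makes the TV bound work without a $q_{k+1}$ factor: either the whole $\alpha_{k+1}$-orbit of $\omega$ lies in the modification region, or none of it does.

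Your scheme replaces this with a comparison against a fixed $s$ and an eigenvalue-interlacing estimate. The interlacing estimate gives only
$$
\|k_{\alpha_k,f_k}-k_{\alpha_k,s}\|_\infty\le\mathrm{Leb}(\CI),
$$
a uniform bound on distribution functions, which is much weaker than a TV bound. When you then assert ``Taking $U_k$ to be $\Sigma_{\alpha_k,s}$ slightly fattened, this yields $dk_{\alpha_k,f_k}(U_k)\ge1-O(\mathrm{Leb}(\CI))$,'' that step is wrong: $\R\setminus U_k$ is a disjoint union of as many as $q_k$ intervals, each one can swallow mass $2\,\mathrm{Leb}(\CI)$, and there is no telescoping, so the honest bound is $dk_{\alpha_k,f_k}(U_k)\ge 1-O(q_k\,\mathrm{Leb}(\CI))$. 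Moreover this factor is unavoidable with your $U_k$: at rational frequency $p_k/q_k$ the density of states of $H^s$ assigns mass $\approx 1/q_k$ to each of the $q_k$ band bundles, so any small open set carrying nearly all the mass of $dk_{\alpha_k,s}$ must have $\approx q_k$ components. Since $q_k\to\infty$, invariant (B) cannot be pushed through this way.

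There is a second, compounding problem which you flag but do not resolve. Your (B) says $f_k=s$ off a small $\alpha_k$-invariant set; your (A) requires $f_{k+1}-f_k$ supported on a small $\alpha_{k+1}$-invariant set. These cannot both hold: the $\alpha_k$-invariant support from stage $k$ is not $\alpha_{k+1}$-invariant, and its $\alpha_{k+1}$-orbit blows up by a factor $q_{k+1}$. You suggest ``discarding'' earlier gap-closing perturbations, but the required perturbation size at stage $k$ is of the order of the gap sizes of $\Sigma_{\alpha_k,s}$ in $I$, which is controlled by $d(\Sigma_{\alpha_k,s},\Sigma_{\alpha_0,f_0})$, a quantity that does not go to zero (since $\|s-f_0\|_\infty$ is a fixed small number). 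So the discarded amounts do not form a summable sequence and $f_k$ would fail to converge uniformly. The paper simply sidesteps this entire issue: because it re-invokes Corollary~\ref{c.1} each time, there is only ever \emph{one} perturbation at each stage, and it is supported on an invariant set. If you want a correct proof, drop the fixed-$s$ bookkeeping and follow the paper's route: Corollary~\ref{c.1} to seed singularity of $dk$, Lemma~\ref{l.3} to transfer to a nearby rational, and Lemma~\ref{l.5} plus the TV estimate to close the gaps in $I$ while staying inside $D_{k+1}$.
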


\begin{proof}
It suffices to show that for any given $(\alpha,f) \in \T \times C(\T,\R)$ with $\alpha$ rational, any $\delta > 0$ and any (non-degenerate) interval $I \subset \Sigma_{\alpha,f}$, we can find $(\beta,g) \in B_\delta(\alpha,f)$ such that $dk_{\beta,g}$ is singular and $I \subset \Sigma_{\beta,g}$.

First note that by Lemma~\ref{l.3}, the set
$$
D_n := \left\{ (\alpha,f) \in \T \times C(\T,\R) : dk_{\alpha,f}(\mathcal B) > 1 - \frac{1}{n} {\rm\ for\ some }\ \mathcal B {\rm\ with\ } {\rm Leb}(\mathcal B) < \frac{1}{n} \right\}
$$
is open in $\T \times C(\T,\R)$ with respect to the natural metric.

Now let $(\alpha,f) = (\alpha_1,f_1)$ with $\alpha_1 \in \Q$ and $\delta = \delta_1 > 0$ be given. Then by Corollary~\ref{c.1} and Lemma~\ref{l.2}, we can pick $(\alpha',f') \in B_\frac{\delta_1}{4} (\alpha_1,f_1)$ such that $dk_{\alpha', f'}$ is singular and $d(\Sigma_{\alpha_1,f_1}, \Sigma_{\alpha',f'}) < \frac{\delta_1}{4}$. Then we can choose a rational $\alpha_2$ such that
$$
|\alpha_2 - \alpha'| < \frac{\delta_1}{4}, \quad d(\Sigma_{\alpha_1,f_1}, \Sigma_{\alpha_2,f'}) < \frac{\delta_1}{4}\ {\rm and }\ (\alpha_2,f') \in D_2.
$$
Then by Lemma~\ref{l.5} and the discussion following it, if we choose $r$ sufficiently small, we can perturb $f'$ to $f_2$ such that $I \subset \Sigma_{\alpha_2,f_2}$ and
$$
(\alpha_2,f_2) \in B_\frac{\delta_1}{2}(\alpha_1,f_1) \cap D_2.
$$

Thus we can find $0 < \delta_2 < \frac{\delta_1}{2}$ such that
$$
\overline B_{\delta_2}(\alpha_2,f_2) \subset B_\frac{\delta_1}{2}(\alpha_1,f_1) \cap D_2.
$$

Now repeat this procedure, and by induction we can find a sequence $\{(\alpha_k, f_k)\}_{k \ge 1} \subset (\Q \cap \T) \times C(\T,\R)$ and a sequence of positive numbers $\{\delta_k\}_{k \ge 1}$ such that
$$
\overline B_{\delta_{k+1}}(\alpha_{k+1},f_{k+1}) \subset B_{\delta_k}(\alpha_k,f_k) \cap D_{k+1}, \ I \subset \Sigma_{\alpha_k,f_k}
$$
with $\delta_k < \frac{\delta_{k-1}}{2}$.

Thus there is $(\alpha_\infty, f_\infty) \in \T \times C(\T,\R)$ such that
$$
\lim_{k \to \infty} (\alpha_k, f_k) = (\alpha_\infty, f_\infty) \in B_\delta(\alpha,f).
$$
Since $\{(\alpha_k, f_k),\ k\ge n\}$ is contained in a ball whose closure is contained in $D_n$, we have
$$
(\alpha_\infty, f_\infty) \in D_n, \quad \forall n \ge 1.
$$
Thus $dk_{\alpha_\infty,f_\infty}$ is singular and $\alpha_\infty$ is irrational. Furthermore, by Lemma~\ref{l.2}, we also have $I \subset \Sigma_{\alpha_\infty,f_\infty}$. This completes the proof of Theorem~\ref{t.2}.
\end{proof}

Theorem~\ref{t.2} shows that zero-measure spectrum is indeed a strictly stronger property than singularity of the density of states measure. In addition, note that the examples with positive-measure spectrum exhibited in Theorem~\ref{t.2} actually have intervals in their spectrum and hence the interior of the spectrum is non-empty. This ``failure of Cantor spectrum'' is a new phenomenon. Indeed, to the best of our knowledge, no (aperiodic) one-frequency quasi-periodic Schr\"odinger operators were previously known for which the spectrum is not nowhere dense.

\bigskip

To conclude, we ask the following question, which motivated us to prove the results presented in the present section: Is it true that for every $\alpha \not\in \Q$, the set
$$
\{ f \in C(\T,\R) : \mathrm{Leb}(\Sigma) = 0 \}
$$
is a dense $G_\delta$?

\end{document}